\newtheorem{theorem}{Theorem}[section]
\newtheorem{lemma}[theorem]{Lemma}
\newtheorem{corollary}[theorem]{Corollary}
\newtheorem{proposition}{Proposition}[section]
\numberwithin{equation}{section}
\newcommand{\bbZ}{\mathbb{Z}}
\newcommand{\bbR}{\mathbb{R}}
\newcommand{\bbC}{\mathbb{C}}
\newcommand{\bbD}{\mathbb{D}}
\newcommand{\bbN}{\mathbb{N}}
\newcommand{\bbH}{\mathbb{H}}
\newcommand{\eitheta}{e^{i\theta}}
\newcommand{\mcg}{\mathcal{G}}
\newcommand{\mcn}{\mathcal{N}}
\newcommand{\Real}{\textrm{Re}}
\newcommand{\mutil}{\tilde{\mu}}
\newcommand{\supp}{\textrm{supp}}
\newcommand{\cpct}{\mbox{cap}}
\newcommand{\bard}{\overline{\bbD}}
\newcommand{\barg}{\overline{G}}
\newcommand{\barc}{\overline{\bbC}}
\newcommand{\dtpi}{\frac{d\theta}{2\pi}}
\newcommand{\nri}{n\rightarrow\infty}
\begin{document}

\title[ ] {Asymptotic Properties of Extremal Polynomials Corresponding to Measures Supported on Analytic Regions}

\bibliographystyle{plain}

\thanks{  }

\maketitle

\begin{center}
\textbf{Brian Simanek}\footnote{Mathematics MC 253-37, California Institute of Technology, Pasadena, CA 91125, USA. E-mail: bsimanek@caltech.edu.   This material is based upon work supported by the National Science Foundation Graduate Research Fellowship under Grant No. DGE-1144469.}
\end{center}

\begin{abstract}
Let $G$ be a bounded region with simply connected closure $\barg$ and analytic boundary and let $\mu$ be a positive measure carried by $\barg$ together with finitely many pure points outside $G$.  We provide estimates on the norms of the monic polynomials of minimal norm in the space $L^q(\mu)$ for $q>0$.  In case the norms converge to $0$, we provide estimates on the rate of convergence, generalizing several previous results.  Our most powerful result concerns measures $\mu$ that are perturbations of measures that are absolutely continuous with respect to the push-forward of a product measure near the boundary of the unit disk.  Our results and methods also yield information about the strong asymptotics of the extremal polynomials and some information concerning Christoffel functions.
\end{abstract}

\vspace{4mm}

\footnotesize\noindent\textbf{Keywords:} Orthogonal polynomials, Strong asymptotics, Product measures, Equilibrium measures

\vspace{2mm}

\noindent\textbf{Mathematics Subject Classification:} 42C05, 30E10, 26C10

\vspace{2mm}

\normalsize

\section{Introduction}\label{intro}

\subsection{Background}\label{background}

Consider a finite and positive measure $\mu$ of compact and infinite support $\supp(\mu)\subseteq\bbC$.  Given such a measure and any $q>0$, we can define the sequence of monic polynomials $\{P_n(z;\mu,q)\}_{n=0}^{\infty}$ by letting $P_n(z;\mu,q)$ be any monic polynomial satisfying
\[
\|P_n(z;\mu,q)\|_{L^q(\mu)}=\inf\{\|Q_n\|_{L^q(\mu)}:Q_n=z^n+\mbox{ lower order terms}\};
\]
a property called the \textit{extremal property} of the polynomials $P_n(z;\mu,q)$.  For $q>1$, the strict convexity of the norm implies such a polynomial is unique, while it need not be when $0<q\leq1$ (see page 84 in \cite{StaTo}, see also Proposition \ref{nonunique} in the appendix for a treatment of the case $q=1$).  When the meaning is clear, we will often omit the $z$, $\mu$, or $q$ dependence of $P_n(z;\mu,q)$ in our notation.  By dividing each $P_n(\mu,q)$ by its $L^q(\mu)$ norm, we obtain the sequence of normalized polynomials $\{p_n(\mu,q)\}_{n=0}^{\infty}$ (we use the word ``norm" here loosely as it is not technically a norm when $q<1$).  In case $q=2$, the polynomial $p_n(\mu,q)$ is just the orthonormal polynomial for the measure $\mu$.  For an extensive introduction to the general theory of orthogonal polynomials - especially on the real line and the unit circle - we refer the reader to the references \cite{NevaiOP,OPUC1,Rice,StaTo,Suetin,Szego} and references therein.

We will consider measures whose support is contained in some compact and simply connected set $\barg$ along with finitely many points not in $\barg$.  We will also assume that $G$ is a region with analytic boundary (as defined on page 42 in \cite{GarnMar}) and that the logarithmic capacity (see section \ref{tools} below) of $G$ is equal to $1$.  One of our main tools for studying these extremal polynomials is the conformal map $\psi$ mapping the exterior of the closed unit disk $\bard$ to $\overline{\bbC}\setminus\barg$ and satisfying $\psi(\infty)=\infty$ and $\psi'(\infty)>0$.  We will denote the inverse function to $\psi$ by $\phi$.  Since $G$ has analytic boundary, the map $\psi$ can be extended to be univalent (that is, holomorphic and injective) on a slightly larger region, namely the exterior of the closed disk of radius $\tilde{\rho}$ for some $\tilde{\rho}<1$.  From now on we will assume that $\rho$ is some fixed number in the interval $(\tilde{\rho},1)$.

If we define $A_r:=\{z:r\leq|z|\leq1\}$ for every $r\in[\rho,1]$ and define $G_r:=\psi(A_r)$ then $\psi$ and $\phi$ provide a one-to-one correspondence between measures on $A_{\rho}$ and $G_{\rho}$.  Given a measure $\lambda$ on $G_{\rho}$, we will denote the corresponding measure $\kappa$ on $A_{\rho}$ by $\phi_*\lambda$.  By this we mean that for all $f\in C(\barg)$, we have
\[
\int_{G_{\rho}}f(z)d\lambda(z)=\int_{A_{\rho}}f(\psi(w))d\kappa(w).
\]
Similarly, we can write $\lambda=\psi_*\kappa$.  For example, the equilibrium measure for $\barg$ can written as $\psi_*(\dtpi)$ (see Theorem 3.1 in \cite{ToTrans}).

Central to the theory of $L^q$ extremal polynomials on a smooth Jordan curve is the analog of \textit{Szeg\H{o}'s Theorem} on the unit circle.  This can be stated as the following theorem, which follows from Theorem 7.1 in \cite{Geronimus}:

\begin{theorem}\label{geronseven}\cite{Geronimus}
If $\mu$ is a finite measure on an analytic Jordan curve $\Gamma$ having capacity $1$, then
\begin{align}\label{szegstat}
\lim_{\nri}\|P_n(\mu,q)\|^q_{L^q(\mu)}=\exp\left(\int_0^{2\pi}\log((\phi_*\mu)'(\theta)))\frac{d\theta}{2\pi}\right).
\end{align}
\end{theorem}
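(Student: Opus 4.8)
The plan is to transport the problem to the unit circle via the conformal map and reduce it to the classical $L^q$ analogue of Szeg\H{o}'s theorem on $\partial\bbD$, which I would quote (and, for the lower half, reprove). Set $\nu:=\phi_*\mu$, a finite measure on $\partial\bbD$ with Lebesgue decomposition $d\nu=w\,\dtpi+d\nu_{\mathrm s}$ where $w=(\phi_*\mu)'$. Since $\mu$ lives on $\Gamma$ and $\psi$ maps $\partial\bbD$ bijectively onto $\Gamma$, every polynomial $Q$ obeys $\int_\Gamma|Q(z)|^q\,d\mu(z)=\int_0^{2\pi}|Q(\psi(\eitheta))|^q\,d\nu(\theta)$. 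To pass between polynomials in $z$ and polynomials in $w$ I would use the Faber polynomials $F_n$, the polynomial part of $(\phi(z))^n$ at $z=\infty$: because $\cpct(G)=1$ forces $\psi(w)=w+O(1)$, each $F_n$ is monic of degree $n$, and because $\partial G$ is analytic (so $\psi$ is univalent on $|w|>\tilde\rho$) one has the Faber estimate $\sup_{|w|\ge1}|F_n(\psi(w))-w^n|\le C\tilde\rho^{\,n}$.

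\emph{Lower bound.} For any monic $Q$ of degree $n$, discard $\nu_{\mathrm s}$ and apply the arithmetic--geometric mean inequality against the probability measure $\dtpi$:
\begin{gather*}
\int_\Gamma|Q|^q\,d\mu\ \geq\ \int_0^{2\pi}\big|Q(\psi(\eitheta))\big|^q w(\theta)\,\dtpi\\
\geq\ \exp\Big(q\!\int_0^{2\pi}\log\big|Q(\psi(\eitheta))\big|\,\dtpi\Big)\exp\Big(\int_0^{2\pi}\log w(\theta)\,\dtpi\Big).
\end{gather*}
The function $w\mapsto\log|Q(\psi(w))|-n\log|w|$ is subharmonic on $\{|w|>\tilde\rho\}$, and since $Q$ is monic and $\psi(w)=w+O(1)$ it extends subharmonically across $w=\infty$ with value $0$ there; the sub-mean-value inequality at $\infty$ (read off after the inversion $w\mapsto1/w$) gives $\int_0^{2\pi}\log|Q(\psi(\eitheta))|\,\dtpi\ge0$. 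Hence $\int_\Gamma|Q|^q\,d\mu\ge\exp(\int_0^{2\pi}\log w(\theta)\,\dtpi)$ for every such $Q$, so $\liminf_{\nri}\|P_n(\mu,q)\|_{L^q(\mu)}^q\ge\exp(\int_0^{2\pi}\log((\phi_*\mu)'(\theta))\,\dtpi)$. (For $q<1$ this is not literally a norm, but the estimate is unaffected.)

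\emph{Upper bound.} By the transference it suffices to produce monic $q_n(w)=w^n+\cdots$ on $\partial\bbD$ with $\int_0^{2\pi}|q_n(\eitheta)|^q\,d\nu\to\exp(\int_0^{2\pi}\log w\,\dtpi)$ whose nontrivial coefficients lie in degrees $n-o(n),\dots,n$: then $Q_n:=\sum_j a_jF_j$ (where $q_n=\sum_j a_jw^j$) is monic of degree $n$ in $z$ and $\sup_{|w|\ge1}|Q_n(\psi(w))-q_n(w)|\le C\tilde\rho^{\,n-o(n)}\to0$, so $\int_\Gamma|Q_n|^q\,d\mu$ has the same limit. For the absolutely continuous part the standard device works: for $\delta>0$ let $D_\delta$ be the outer function with $|D_\delta(\eitheta)|^q=w(\theta)+\delta$ a.e.\ (so $D_\delta^{-1}\in H^\infty$ and $|D_\delta(0)|^q=\exp(\int_0^{2\pi}\log(w+\delta)\,\dtpi)$), take a Ces\`aro mean $h_n$ of $D_\delta^{-1}$ of degree $\lfloor n/2\rfloor$ (so $\|h_n\|_\infty\le\|D_\delta^{-1}\|_\infty$, $h_n\to D_\delta^{-1}$ in $L^q(\partial\bbD)$, and $h_n(0)=D_\delta(0)^{-1}\neq0$), and set $q_n(w):=w^n\,\overline{h_n(1/\bar w)}\,\overline{D_\delta(0)}$; this is monic of degree $n$ with the required coefficient pattern, $|q_n(\eitheta)|\to(w(\theta)+\delta)^{-1/q}|D_\delta(0)|$ in $L^q(\dtpi)$ with a uniform sup bound, and hence the absolutely continuous part of $\int|q_n|^q\,d\nu$ tends to $\exp(\int\log(w+\delta)\,\dtpi)\int\frac{w}{w+\delta}\,\dtpi\le\exp(\int\log(w+\delta)\,\dtpi)$, which $\to\exp(\int\log w\,\dtpi)$ as $\delta\to0$. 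The missing ingredient is that the singular part $\nu_{\mathrm s}$ is invisible to the asymptotics: one must be able to modify the trial polynomials so as to stay of extremal size against $w\,\dtpi$ while becoming small against $\nu_{\mathrm s}$. This is the classical fact that Szeg\H{o} asymptotics depend only on the absolutely continuous part of the measure (for $q=2$ it comes from Bernstein--Szeg\H{o} moment-matching approximants of $\nu$; in general it is part of Theorem~7.1 of \cite{Geronimus}). Combining the two bounds yields \eqref{szegstat}, the case $\int\log(\phi_*\mu)'=-\infty$ being contained in it since then both sides are $0$.

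The conformal transference via Faber polynomials and the subharmonicity argument for the lower bound are routine. I expect the one real obstacle to be the invisibility of the singular part in the upper bound --- arranging trial polynomials that are simultaneously of extremal size against $w\,\dtpi$ and negligible against $\nu_{\mathrm s}$ --- and there I would simply quote Theorem~7.1 of \cite{Geronimus} (or, for $q=2$, derive it from Bernstein--Szeg\H{o} approximations) rather than reconstruct the argument.
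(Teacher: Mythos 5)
The paper offers no proof of this statement at all: it is imported wholesale as a consequence of Theorem~7.1 of \cite{Geronimus}, so there is nothing internal to compare your argument against. Judged on its own terms, your sketch is the standard and essentially correct route: the transference $\int_\Gamma|Q|^q d\mu=\int|Q\circ\psi|^q d(\phi_*\mu)$, the lower bound via Jensen's inequality together with $\int_0^{2\pi}\log|Q(\psi(\eitheta))|\,\dtpi\ge 0$ for monic $Q$ (your subharmonicity-at-infinity argument is fine; one can also see it from convexity of the circular means in $\log r$), and the upper bound via Faber polynomials applied to near-extremal trial polynomials on the circle, with the Faber error $O(n\tilde{\rho}^{n/2})\to 0$. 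Two caveats. First, a small slip: Ces\`aro means of degree $\lfloor n/2\rfloor$ put the coefficients of $q_n$ in degrees $n/2,\dots,n$, not $n-o(n)$; this is harmless since the Faber error estimate only needs the coefficients supported in degrees $\ge cn$ with polynomially bounded $\ell^1$ norm. Second, and more substantively, your trial polynomials $q_n$ satisfy $|q_n|\le\|D_\delta^{-1}\|_\infty|D_\delta(0)|$ uniformly on $\partial\bbD$, so $\int|q_n|^q d\nu_{\mathrm s}$ is merely bounded, not $o(1)$; as you correctly identify, making the trial polynomials simultaneously near-extremal for $w\,\dtpi$ and negligible for $\nu_{\mathrm s}$ is the one genuinely nontrivial ingredient, and you defer it to the same Theorem~7.1 of \cite{Geronimus}. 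Since that citation carries essentially the full strength of the result being proved (at least in the circle case), your proposal should be read not as an independent proof but as a correct reduction of the analytic-curve case to the unit-circle case --- which is in fact exactly the logical role the theorem plays in the paper, and arguably more than the paper itself supplies.
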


Any measure for which the right hand side of (\ref{szegstat}) is finite will be called a \textit{Szeg\H{o} measure} on $\partial\bbD$.  Szeg\H{o}'s Theorem can also be stated for measures on the real line (see Theorem 1.1 in \cite{FinGap2} for a precise statement).  We note here that Szeg\H{o}'s Theorem for analytic curves - as we have stated it - does not require $\mu$ to be a probability measure.

There has also been considerable research on orthogonal polynomials for measures supported on regions.  Substantial results were introduced by Carleman in \cite{Carleman} and major achievements in the field since then include the works of Ullman \cite{Ull1,Ull2}, Suetin \cite{Suetin}, Lubinsky \cite{LubBerg}, Mi\~{n}a-D\'{i}az \cite{MDPoly}, Saff \cite{SaffConj}, Stylianopoulos \cite{Corner}, Totik \cite{ToTrans}, and Widom \cite{Wid2} among others.  Recently, Nazarov, Volberg, and Yuditskii showed in \cite{Koosis} that the appropriate analog of Szeg\H{o}'s Theorem holds when $\mu$ can be written as the sum of a measure carried by $\bbD=\{z:|z|<1\}$, a Szeg\H{o} measure on $\partial\bbD$ with no singular part, and a pure point measure carried by the compliment of $\bard$.  Their result motivates our investigation in many ways.  We provide leading order asymptotics for the monic orthogonal polynomial norms in a related setting.  A special case of our main theorem (Theorem \ref{bigone} below) applies to measures similar to those considered in \cite{Koosis} although we allow for a singular component to the measure on $\partial\bbD$, but we only allow for finitely many pure points outside $\bard$.

Our results can also be motivated by the conjecture in \cite{SaffConj}.  If a measure on $\bbD$ is given by $w(z)d^2z$ where $w>0$ Lebesgue almost everywhere in some annulus with outer boundary $\partial\bbD$, the conjecture asserts that
\[
\lim_{\nri}\frac{p_{n+1}(z;\mu,2)}{zp_{n}(z;\mu,2)}=1
\]
uniformly on compact subsets of $\barc\setminus\bard$.  We will settle this conjecture in the affirmative if the weight factors as $w(r\eitheta)=h(r\eitheta)f(\theta)g(r)$ with $h$ continuous and non-vanishing on $\bard$, $f$ a Szeg\H{o} weight, and $1\in\supp(g(r)dr)$.  Indeed our main theorem considers measures that can be thought of as perturbations of such measures.  We consider several different kinds of perturbations, including adding finitely many pure points outside $\bbD$, allowing a singular component to the angular measure with weight $f(\theta)$, and the addition of a measure whose density at the boundary is negligible compared to $w(z)d^2z$.

Throughout this paper, for a measure $\gamma$ (on any set), we denote
\[
c_t(\gamma)=\int_{\bbC}|z|^t\,d\gamma(z)
\]
where we do \textit{not} insist $t\in\bbN$.  We will see that these ``moments" provide the appropriate rate of decay of the norms of the extremal polynomials.  One of our main results is the following:

\begin{theorem}\label{bigone}
Consider the measure $\mutil(r\eitheta)=h(r\eitheta)\left(\nu(\theta)\otimes\tau(r)\right)+\sigma_2(r\eitheta)$ where
\begin{enumerate}
\item $h(z)$ is a continuous function on $\bard$ that is non-vanishing in a neighborhood of $\partial\bbD$,
\item $\sigma_2$ is a measure carried by $A_{\rho}$ that satisfies $\lim_{t\rightarrow\infty}c_t(\sigma_2)c_{t}(\tau)^{-1}=0$,
\item $\nu$ is a measure on the unit circle such that $\nu'(\theta)>0$ Lebesgue almost everywhere,
\item $\tau$ is a measure on $[\rho,1]$ such that $1\in\supp(\tau)$.
\end{enumerate}
Let $\mu$ be the measure on $\bbC$ be given by
\[
\mu=\psi_*\mutil+\sigma_1+\sum_{j=1}^m\alpha_j\delta_{z_j}+\sum_{j=1}^{\ell}\beta_j\delta_{\zeta_j}
\]
where $\supp(\sigma_1)\subseteq G$, $\alpha_j,\beta_j>0$, $z_j\not\in\barg$ for all $j\in\{1,\ldots,m\}$, and $\zeta_j\in\partial G$ for all $j\in\{1,\ldots,\ell\}$.
Then
\begin{align}\label{bigconcl}
\lim_{\nri}\frac{\|P_n(z;\mu,q)\|^q_{L^q(\mu)}}{c_{qn}(\tau)}=
\exp\left(\int_0^{2\pi}\log\left(h(\eitheta)\nu'(\theta)\right)\,\dtpi\right)\prod_{j=1}^m|\phi(z_j)|^q.
\end{align}
\end{theorem}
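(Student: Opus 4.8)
The plan is to reduce the theorem to the one-dimensional Szeg\H{o}-type asymptotics on the annulus $A_\rho$ via the conformal map $\psi$, handle the pure points by a standard Blaschke-product factorization, and then identify the correct rate of decay $c_{qn}(\tau)$ by comparing the extremal problem for the product measure $\nu(\theta)\otimes\tau(r)$ with that for a product of a Szeg\H{o} measure on $\partial\bbD$ and the radial measure $\tau$. First I would establish upper and lower bounds separately, as is typical for this sort of extremal result. For the upper bound I would exhibit a good competitor polynomial: roughly, take the Szeg\H{o} function associated to $h(\eitheta)\nu'(\theta)$ on the circle to build a polynomial that is nearly optimal for the angular part, multiply by $z^n$-type factors adapted to the radial behavior so that $\int |Q_n|^q\, d(\nu\otimes\tau)$ is controlled by $c_{qn}(\tau)$ times the Szeg\H{o} integral, and then pull everything back through $\psi$ (using $|\psi'(\infty)|=\cpct(G)=1$ and the extension of $\psi$ to $|w|>\tilde\rho$ so that the change of variables is benign on $A_\rho$). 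The contributions of $\sigma_1$ (carried inside $G$, hence exponentially negligible since the competitor grows like $|\phi(z)|^n$ there), of $\sigma_2$ (negligible by hypothesis (2), $c_t(\sigma_2)=o(c_t(\tau))$), and of the boundary points $\zeta_j\in\partial G$ (where $|\phi(\zeta_j)|=1$, contributing a bounded amount that is swamped once divided by $c_{qn}(\tau)\to$ anything $\gtrsim$ a fixed positive quantity if $1\in\supp\tau$... more precisely these points are absorbed into the $o(1)$ because a single point carries finite mass and the competitor can be taken uniformly bounded on $\partial G$) must all be checked to be lower-order. The pure points $z_j\notin\barg$ force the factor $\prod_j|\phi(z_j)|^q$: one divides the monic extremal polynomial by $\prod_j(z-z_j)$ up to normalization, equivalently multiplies a near-optimal competitor by $\prod_j\frac{\psi(w)-z_j}{\cdots}$, and the asymptotic size of such a Blaschke-type factor on $|w|=1$ is exactly $\prod_j|\phi(z_j)|$; this is the standard mechanism (cf. the pure-point handling in \cite{Koosis}).

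For the lower bound I would argue that any monic $P_n$ must have $L^q(\mu)$-norm at least (up to $1+o(1)$) the claimed quantity. The cleanest route is: restrict the integral to the push-forward of $h\cdot(\nu\otimes\tau)$ and drop the other pieces (legitimate since all measures are positive), reduce via $\psi$ to an extremal problem on $A_\rho$ for the measure $h(\psi(w))\,\nu\otimes\tau$, and then apply Szeg\H{o}'s theorem (Theorem~\ref{geronseven}) fiberwise in $r$. Concretely, for $\tau$-a.e.\ fixed radius $r$ the restriction of a monic-in-$z$ polynomial to the circle $|w|=r$ is a polynomial in $w$ of degree $n$ with leading coefficient $r^{\,?}$... actually leading coefficient of modulus tied to $r^n$ after accounting for $\psi$, and the $L^q$ of the circle of radius $r$ against $\nu$ is bounded below by $r^{qn}$ times the Szeg\H{o} constant $\exp(\int\log(h\nu')\,\dtpi)$ by a scaled version of Theorem~\ref{geronseven} (here the non-vanishing and continuity of $h$ near $\partial\bbD$, plus $\nu'>0$ a.e., give a genuine Szeg\H{o} measure). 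Integrating this lower bound in $r$ against $\tau$ reproduces $c_{qn}(\tau)\cdot\exp(\int\log(h\nu')\,\dtpi)$. The pure points then contribute the extra $\prod_j|\phi(z_j)|^q$ by an independent lower bound (evaluating $P_n$ at the $z_j$ and using that $c_{qn}(\tau)\to$ the relevant scale — more carefully, one combines the two lower bounds by a standard argument using that the extremal polynomial's zeros cannot all cluster at the $z_j$).

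The main obstacle I expect is the matching of constants in the radial/angular splitting — i.e., showing that the ``fiberwise Szeg\H{o}'' lower bound and the product-measure competitor for the upper bound both produce \emph{exactly} $c_{qn}(\tau)\exp(\int_0^{2\pi}\log(h(\eitheta)\nu'(\theta))\,\dtpi)$ with no leftover exponential factor. The subtlety is that the optimal angular polynomial for $L^q(\nu)$ depends on $n$ but the Szeg\H{o} asymptotic is only a limit, so one must control the approach uniformly enough in $r\in[\rho,1]$ to integrate against $\tau$; the condition $1\in\supp(\tau)$ is what prevents $c_{qn}(\tau)$ from decaying faster than any geometric rate and makes the near-boundary behavior (where $h\neq0$) the dominant contribution, so the estimates should localize there. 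A secondary technical point is the non-uniqueness of $P_n$ when $q\le 1$, which means ``the'' extremal polynomial must be replaced by ``any'' extremal polynomial throughout; this only affects bookkeeping, not the structure of the argument. The continuity of $h$ (not just boundedness) is used to pass $\log h$ outside the fiberwise integrals with controlled error near $\partial\bbD$, and the neighborhood-of-$\partial\bbD$ non-vanishing of $h$ keeps $\log(h\nu')$ integrable.
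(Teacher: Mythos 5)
Your overall architecture (competitor polynomial of the form ``radial factor times near-optimal angular factor'' for the upper bound, fiberwise Szeg\H{o}-type lower bound, zeros factored out at the exterior mass points) is the same as the paper's, but two steps as written would fail. The first concerns the boundary pure points $\zeta_j\in\partial G$. You claim their contribution to the upper bound is swamped after dividing by $c_{qn}(\tau)$, on the grounds that $1\in\supp(\tau)$ keeps $c_{qn}(\tau)$ bounded below by a positive constant. That is false: $1\in\supp(\tau)$ only gives $c_{qn}(\tau)^{1/n}\rightarrow 1$, and for $d\tau=(1-r)dr$ (one of the paper's own examples) one has $c_{qn}(\tau)\rightarrow 0$ like $n^{-2}$. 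A competitor that is merely bounded and nonzero at $\zeta_j$ contributes a fixed positive amount $\beta_j|Q_n(\zeta_j)|^q$ to its norm, which is \emph{not} $o(c_{qn}(\tau))$, so your upper bound collapses whenever $\ell\geq1$ and $\tau\neq\delta_1$. The paper's fix is to place an actual zero of the competitor at each $\zeta_j$ (the factor $\Upsilon_\infty(z)=\prod_{j}(z-\zeta_j)$) and to check via Proposition \ref{psiint} that multiplying the weight by $|z-\zeta_j|^q$ changes the Szeg\H{o} constant by $\log|\phi(\zeta_j)|^q=0$, so this correction is free.

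The second gap is the mechanism producing $\prod_j|\phi(z_j)|^q$ in the lower bound. ``Evaluating $P_n$ at the $z_j$'' cannot yield it: zero attraction (Lemma \ref{closetozone}, and ultimately Corollary \ref{singzero}) forces $P_n(z_j)\rightarrow0$, so $\alpha_j|P_n(z_j)|^q$ is negligible, not dominant. The correct route, which your ``standard argument'' gestures at but does not identify, is: $P_n$ has a zero $w_{j,n}\rightarrow z_j$ for each $j$; factor $P_n=y_n\Lambda_n$ with $y_n(z)=\prod_j(z-w_{j,n})$; apply the fiberwise bound to the monic degree-$(n-m)$ polynomial $\Lambda_n$ against the modified weight $\prod_j|\psi(r\eitheta)-w_{j,n}|^qh\nu'$; and observe that the geometric mean of the extra factor over each circle equals $\prod_j|\phi(w_{j,n})|^q\rightarrow\prod_j|\phi(z_j)|^q$, again by Proposition \ref{psiint}. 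Relatedly, your fiberwise lower bound should be phrased as the $H^q$ mean-value inequality (\ref{subharm}) applied to $\Lambda_n(\psi(rz))z^{-(n-m)}S_{r,n}(z)$, valid for every fixed $n$, rather than as ``a scaled version of Theorem \ref{geronseven},'' which is only an asymptotic statement about minimal norms (one can rescue your phrasing on the unit circle via monotonicity of the minimal norms in $n$, but the subharmonicity inequality is what the argument actually uses). The remaining ingredients of your sketch (the $\sigma_1$, $\sigma_2$ estimates, the concentration of $r^{qn}d\tau/c_{qn}(\tau)$ at $r=1$, and the role of continuity of $h$) are consistent with the paper's proof.
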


\noindent\textit{Remark.}  We do not actually need $h$ to be continuous.  The proof of Theorem \ref{bigone} will show that $h$ need only satisfy the following conditions:
\begin{enumerate}
\item $0<\delta_1<h(r\eitheta)<\delta_2<\infty$ for all $r\in[\rho,1]$ and $\theta\in[0,2\pi]$,
\item $\int_0^{2\pi}\log(h(r\eitheta))\dtpi$ is a contunous funtion of $r\in[\rho,1]$,
\item for any polynomial $\Phi$, any $k\in\bbN$, and any $q>0$ it holds that
\[
\lim_{r\rightarrow1^-}\int_0^{2\pi}e^{ik\theta}|\Phi(\psi(r\eitheta))|^qh(r\eitheta)\dtpi=
\int_0^{2\pi}e^{ik\theta}|\Phi(\psi(\eitheta))|^qh(\eitheta)\dtpi.
\]
\end{enumerate}
For example, if $D^+=\bbD\cap\{z=x+iy:x>0\}$ and $D^-=\bbD\cap\{z=x+iy:x\leq0\}$ then $h(z)=\chi_{D^+}(z)+2\chi_{D^-}(z)$ is a function $h$ to which Theorem \ref{bigone} applies.

\vspace{2mm}

During the preparation of this manuscript, we discovered the recent results of Baratchart and Saff, which are outlined in \cite{BaraSaff}.  They consider measures on the unit disk that in may ways resemble the measures we consider in Theorem \ref{bigone}.  They obtain similar results on the asymptotic behavior of the monic orthogonal polynomial norms, though Theorem \ref{bigone} seems to be more general.

The factor of $\prod_{j=1}^m|\phi(z_j)|^q$ in (\ref{bigconcl}) is exactly what one would expect given the results of \cite{Kalicurve,Kaliarc,KaliKon,KiSi,Koosis,PanLi,PrYd}.  We will call a measure $\mu$ as in the statement of Theorem \ref{bigone} a \textit{push-forward of a product measure}.  Let us consider some examples of measures to which we can apply Theorem \ref{bigone}.

\vspace{2mm}

\noindent\textbf{Example.}  If we set $q=2$, $d\nu=\frac{d\theta}{2\pi}$, $d\tau=2rdr$, $\sigma_1=\sigma_2=0$, and $\ell=m=0$ then we are dealing with measures of the form $h(z)d^2z$ for a function $h$ continuous and non-vanishing on $\bard$.  Such measures with an added H\"{o}lder continuity assumption on $h$ were considered by Suetin in \cite{Suetin}.  Theorem \ref{bigone} recovers the leading term in the conclusion of Theorem 3.1 in \cite{Suetin}.

\vspace{2mm}

\noindent\textbf{Example.}  If we set $G=\bbD$, $\tau=\delta_1$, and $h=1$ then we recover a result similar to that of \cite{Koosis} (when $q=2$) that allows for a singular component of the measure on $\partial\bbD$, but only finitely many pure points outside $\bard$.  If we further set $\sigma_1=\sigma_2=0$ then we can recover the result from Theorem 2.2 in \cite{Kalicurve} (for any $G$ with analytic boundary).

\vspace{2mm}

\noindent\textbf{Example.}  Let us set $G=\bbD$ and $\tau=\frac{6}{\pi^2}\sum_{j=1}^{\infty}j^{-2}\delta_{1-2^{-j}}$, $d\nu=\dtpi$, $h=1$, $\ell=m=0$, and $\sigma_2=\sigma_1=0$.  If $s$ is a sufficiently large power of $2$ then
\[
c_s(\tau)=\frac{6}{\pi^2}\sum_{j=1}^{\infty}j^{-2}(1-2^{-j})^s\geq
\frac{6}{\pi^2\log_2(s)^2}\left(1-\frac{1}{s}\right)^{s}\geq\frac{C}{\log_2(s)^2}
\]
for some constant $C>0$.  Theorem \ref{bigone} implies that in this example, the extremal polynomial norms do \textit{not} decay like $O(n^{-1})$ as $\nri$.

\vspace{2mm}

\noindent\textbf{Example.}  Let us set $G=\bbD$, $\tau=(1-r)dr$, $d\nu=d\nu_{ac}$, $\ell=m=0$, and $\sigma_2=\sigma_1=0$.  In this case, we have $d\mu(z)=w(z)d^2z$ where the weight $w$ vanishes on the boundary.  Theorem \ref{bigone} still applies to this measure, and we will see below that we can still derive the asymptotics of the extremal polynomials outside $\bard$.

\vspace{2mm}

Theorem \ref{bigone} provides the asymptotic behavior of the norms of the $L^q(\mu)$-extremal polynomials for general $q\in(0,\infty)$.  We can also deduce the behavior of the extremal polynomials outside the compact set $\barg$, i.e. we can prove what is often referred to as \textit{strong asymptotics}.  If $\mu$ is of the form considered in Theorem \ref{bigone} with $\nu$ a Szeg\H{o} measure on $\partial\bbD$ then we can prove the following:
\begin{enumerate}
\item\label{szegasy} there are polynomials $\{y_n\}_{n\in\bbN}$ (depending on $P_n(\mu,q)$ and $q$) of degree $m$ and a function $S=S_q$ analytic and non-vanishing in $\overline{\bbC}\setminus\bard$ and positive at $\infty$ so that
\begin{align*}
\lim_{\nri}\frac{P_n(\psi(z);\mu,q)S(z)}{y_n(\psi(z))z^{n-m}S(\infty)}=1
\end{align*}
uniformly on compact subsets of $\overline{\bbC}\setminus\bard$,
\item\label{weakk}  the probability measures $|p_n(z;\mu,q)|^qd\mu$ converge weakly to the equilibrium measure for $\barg$ as $\nri$,
\item\label{sumo}  for any $z\in G$, we have $\sum_{n=0}^{\infty}|p_n(z;\mu,2)|^2<\infty$.
\end{enumerate}
Item (\ref{sumo}) follows from an argument based on Christoffel functions and the associated minimization problem.  We will discuss this in more detail in Section \ref{christo} and for all values of $q>0$.  The function $S$ in item (\ref{szegasy}) will be of the form given in (\ref{szegfundef}) below.  We will see that the polynomial $y_n$ in item (\ref{szegasy}) has a single zero near each $z_i$ for $i\in\{1,\ldots,m\}$ and shares all of its zeros with $P_n(z;\mu,q)$.

\subsection{Tools and Methods}\label{tools}

In an effort to fix notation and for the reader's convenience, we will now provide a brief summary of the main tools that we will use in our proofs.

In some of our proofs we will make heavy use of the Szeg\H{o} function, which we now define.  For a Szeg\H{o} measure $\gamma$ on $\partial\bbD$ with Radon-Nikodym derivative given by $\gamma'(\theta)$ we
define a function $S(z;q)$, which is analytic on $\{z:|z|>1\}$ by
\begin{align}\label{szegfundef}
S(z;q)=\exp\left(-\frac{1}{2q\pi}\int_0^{2\pi}\log(\gamma'(\theta))\frac{\eitheta+z}{\eitheta-z}d\theta\right)
\qquad,\qquad |z|>1,
\end{align}
which we will often denote by $S(z)$ if the intended value of $q$ is clear.  By the same argument as in the proof of Theorem 2.4.1(ii) in \cite{OPUC1}, we know $S(z;q)\in\bbH^q(\barc\setminus\bard)$.  We should mention that different authors have used different definitions of the Szeg\H{o} function.  The one we use was also used in \cite{Geronimus,Suetin}, while \cite{OPUC1,OPUC2,PrYd} prefer to define the Szeg\H{o} function slightly differently and with domain $\bbD$.  It follows from equation (6.5) in \cite{Geronimus} that
\[
|S(\eitheta;q)|^q=\lim_{r\searrow1}|S(r\eitheta;q)|^q=\gamma'(\theta)\qquad,\qquad a.e.\,\,\theta\in[0,2\pi].
\]

We will also need potential theoretic objects such as the equilibrium measure, logarithmic potential, and Green function of a compact set.  We refer the reader to the books \cite{GarnMar,Ransford,SaffTot} for additional background in potential theory and to \cite{SimPot,StaTo} for extensive applications of these ideas to orthogonal polynomials.  Given a finite measure $\gamma$ of compact support, we can define its \textit{logarithmic potential}
\[
U^{\gamma}(z):=\int_{\bbC}\log\frac{1}{|z-w|}d\gamma(w),
\]
though for some values of $z$, the integral may be $+\infty$.  We define the \textit{equilibrium measure} of a
compact set $K$ as the unique probability measure $\omega_K$ satisfying
\[
\int_K\int_K\log\frac{1}{|z-w|}d\omega_K(z)d\omega_K(w)=
\inf\left\{\int_K\int_K\log\frac{1}{|z-w|}d\gamma(z)d\gamma(w):\gamma(K)=1=\gamma(\bbC)\right\}
\]
provided the right hand side is finite.  In this case we call the left hand side the \textit{logarithmic energy}
of $\omega_K$ and denote it by $E(\omega_K)$.  It is always true that the support of the equilibrium measure $\omega_K$ is contained in the boundary of $K$ (see Theorem 3.7.6 in \cite{Ransford}).  We define the \textit{logarithmic capacity} of the compact set $K$ as $e^{-E(\omega_K)}$ and denote it by $\cpct(K)$.  In this paper, we will always assume that $\cpct(\barg)=1$ and consequently (with $\psi$ defined as in Section \ref{background}) $\psi'(\infty)=1$.  In this case, we can write
\begin{align*}
\psi(z)=z+\xi_0+\frac{\xi_1}{z}+\frac{\xi_2}{z^2}+\cdots\qquad,\qquad\xi_i\in\bbC.
\end{align*}
A measure $\gamma$ with compact support is called a \textit{regular measure} if
\[
\lim_{\nri}\|P_n(\gamma,2)\|^{1/n}_{L^2(\gamma)}=\cpct(\supp(\gamma)).
\]
The equilibrium measure will play an important role in Section \ref{strong}.  We will mention regularity again in Section \ref{product} and it is a major topic throughout \cite{StaTo}.


Armed with the notions of equilibrium measure and capacity, we can define the Green function with pole at infinity of a compact set $K$ (of positive capacity) as
\begin{align}\label{greedef}
g_{\barc\setminus K}(z;\infty):=-U^{\omega_K}(z)-\log(\cpct(K)).
\end{align}
It follows from Theorem 4.4.4 in \cite{Ransford} that the Green's function is conformally invariant, i.e. if $K_1$ and $K_2$ are simply connected compact sets in the plane and $\mathcal{F}$ is the conformal map that sends the compliment of $K_1$ to the compliment of $K_2$ mapping $\infty$ to itself and having positive derivative there, then
\[
g_{\barc\setminus K_1}(z;\infty)=g_{\barc\setminus K_2}(\mathcal{F}(z);\infty).
\]

\vspace{2mm}

We also include here a brief discussion of Faber polynomials (see \cite{MDFaber} for extensive background and references).  We will denote these polynomials by $F_n(z)$ and they are defined as the polynomial part of the Laurent expansion of $\phi^n(z)$ around $\infty$.  Since we are assuming $\cpct(\barg)=1$, we recover from formula (1.4) in \cite{MDFaber} the following two facts:
\begin{enumerate}
\item\label{monicfaber} $F_n(z)$ is a monic polynomial of degree $n$,
\item\label{faberdecay}  for $\rho<|z|\leq1$ we have
\[
F_n(\psi(z))=z^n+O(\tilde{\rho}^n)
\]
where the implied constant is uniformly bounded from above in the annulus considered.
\end{enumerate}
In case $G=\bbD$, we have $F_n(z)=z^n$.  Many of the proofs in \cite{Suetin} and the proof of the main theorem in \cite{Koosis} rely heavily on generalized Faber polynomials.  We will use Faber polynomials to obtain upper bounds on the $L^q$ norms of the extremal polynomials.

\vspace{2mm}

The remainder of the paper is organized as follows.  In Section \ref{product}, we prove Theorem \ref{bigone}.  One key step will be to use Faber polynomials and look at weak limits of the measures $\left\{\frac{|F_n(z)|^qd\mu}{c_{qn}(\tau)}\right\}_{n\in\bbN}$.  In Section \ref{strong} we will discuss strong asymptotics of the extremal polynomials for measures of the form considered in Theorem \ref{bigone}.  In Section \ref{christo} we will discuss Christoffel functions and their behavior on the set $\barg$, especially inside the region $G$.  A major theme throughout will be the many similarities with the theory of orthogonal polynomials on the unit circle (OPUC).  Many of our results produce interesting corollaries and we will point these out as we go.

Throughout this paper, we will let $\Gamma_r$ be the contour given by $\{\psi(z):|z|=r\}$ for $r>\tilde{\rho}$ and $\mcg_r$ will denote the region bounded by $\Gamma_r$.

\vspace{2mm}

\noindent\textbf{Acknowledgements.}  It is a pleasure to thank Barry Simon for encouraging me to pursue this line of inquiry and for much useful discussion.  I would also like to thank M. Lukic for his help with the reference \cite{Geronimus}.

\section{Push-Forward of Product Measures on the Disk}\label{product}

In this section, we will derive norm asymptotics for the extremal polynomials corresponding to measures of the form considered in Theorem \ref{bigone}.  We will use the Faber polynomials in conjunction with the extremal property to eventually derive an upper bound in the proof of Theorem \ref{bigone} and we will use subharmonicity of appropriate functions to derive a lower bound.  For the remainder of this section, we will let $q>0$ be fixed but arbitrary and we will denote $P_n(z;\mu,q)$ by $P_n(\mu)$ and $\|P_n(\mu)\|_{L^q(\mu)}$ by $\|P_n(\mu)\|_{\mu}$ when there is no possibility for confusion.  We begin with the following crude estimate:

\begin{proposition}\label{nuregreg}
If $\mu$ is as in Theorem \ref{bigone} then $\mu$ is regular.
\end{proposition}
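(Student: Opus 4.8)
The plan is to show that $\mu$ satisfies a sufficient condition for regularity, the most convenient being the criterion of Stahl--Totik: a measure $\gamma$ with $\supp(\gamma)=K$ a compact set whose equilibrium measure is ``nice'' is regular provided that for every $\varepsilon > 0$, the set $\{z\in K : \gamma(B(z,\delta)) \le \varepsilon^n \text{ for infinitely many scales}\}$ is thin enough; more simply, if $\gamma$ dominates (up to sets of capacity zero) a measure which is itself regular. Since $\supp(\mu) = \barg \cup \{z_1,\dots,z_m\}$ and the finitely many mass points outside $\barg$ do not affect regularity (they sit in a set of capacity zero relative to the polynomial asymptotics, or one argues directly that $\|P_n\|^{1/n}$ is unchanged in the limit by a finite-point perturbation), it suffices to show that $\psi_*\mutil + \sigma_1$, a measure carried by $\barg$, is regular as a measure with support (a subset of) $\barg$. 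Here $\cpct(\barg)=1$ by hypothesis.

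First I would reduce to the disk: since $\psi$ is a conformal bijection of $\overline{\bbC}\setminus\bard$ onto $\overline{\bbC}\setminus\barg$ extending univalently past $|z|=\tilde\rho$, regularity is preserved under push-forward by $\psi$ (this is because $\|P_n\|^{1/n}$ is comparable to $\|Q_n\circ\phi\|^{1/n}$ up to the factor $\cpct(\barg)^{?}$ controlled by the Faber polynomial estimate (\ref{faberdecay}), and $\cpct(\bard)=\cpct(\barg)=1$). So it is enough to prove that the measure $\mutil$ on $\bard$ is regular. Now $\mutil = h(\nu\otimes\tau) + \sigma_2 \ge h(\nu\otimes\tau) \ge \delta_1\,(\nu\otimes\tau)$ near $\partial\bbD$, using condition (1) in the Remark (or continuity and non-vanishing of $h$ near $\partial\bbD$). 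Thus $\mutil$ dominates a constant multiple of $\nu\otimes\tau$ restricted to the annulus $A_\rho$. Because $1\in\supp(\tau)$ and $\nu'>0$ a.e., the measure $\nu\otimes\tau$ restricted to $A_\rho$ charges a full neighborhood of $\partial\bbD$ in the sense needed: for any arc $I\subseteq\partial\bbD$ and any $r$ close to $1$ with $[r,1]\cap\supp(\tau)$ of positive $\tau$-measure, the ``box'' $\{se^{i\theta}: s\in[r,1],\ e^{i\theta}\in I\}$ has positive $\mutil$-mass bounded below by $\delta_1\nu(I)\tau([r,1])>0$.

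The key step is then to invoke a criterion guaranteeing regularity from such a lower bound. I would use the following standard fact (see \cite{StaTo}): if $\supp(\gamma)=K$ and there exist $c>0$ and, for each point of a dense subset of $\partial K$... — more cleanly, I would cite that $\gamma$ is regular whenever $\liminf_{r\to 0}\frac{\log\gamma(B(z,r))}{\log r}<\infty$ for $\omega_K$-a.e. $z$, i.e. $\gamma$ is not exponentially thin on $\supp(\gamma)$ except on a set of capacity zero. For $\gamma=\mutil$ on $K=\bard$, the equilibrium measure $\omega_{\bard}$ is normalized arclength on $\partial\bbD$, so I only need the non-thinness at $\omega_{\bard}$-a.e. point of $\partial\bbD$. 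At such a point $e^{i\theta_0}$ with $\nu'(\theta_0)>0$ (a full-measure set), the box estimate above gives $\mutil(B(e^{i\theta_0},r)) \ge \delta_1\,\nu(I_r)\,\tau([1-r,1])$ where $I_r$ is an arc of length $\sim r$; since $\nu(I_r)\gtrsim \nu'(\theta_0)\, r$ for a.e. $\theta_0$ by the Lebesgue differentiation theorem, and $\tau([1-r,1])>0$ for all $r>0$ because $1\in\supp(\tau)$, we get $\mutil(B(e^{i\theta_0},r))>0$ for every $r$, and in fact $\log\mutil(B(e^{i\theta_0},r))/\log r$ stays bounded along a sequence $r\to 0$ — this is where I must be slightly careful, since $\tau([1-r,1])$ could decay faster than any polynomial. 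To handle that, I would instead phrase the conclusion using the $\Lambda$-criterion on a subsequence $r_k\to 0$ chosen so that $\tau([1-r_k,1])\ge r_k^{k}$ is false only if $\tau$ is supported in a single point — but $1\in\supp(\tau)$ only guarantees $\tau([1-\delta,1])>0$. The honest fix: regularity does \emph{not} require polynomial lower bounds on $\gamma$-balls; by a theorem of Stahl--Totik (the ``$\varepsilon$-thinness'' characterization, Theorem 4.1.1 in \cite{StaTo}), $\gamma$ is regular iff for all $\varepsilon>0$ the set where $\gamma$ is $\varepsilon^n$-thin infinitely often has zero capacity; and a positive lower bound $\mutil(\text{box of size }r)\ge \delta_1\nu(I_r)\tau([1-r,1])$ with $\nu(I_r)>0$ and $\tau([1-r,1])>0$ for \emph{all} $r>0$ suffices to place $\omega_{\bard}$-a.e. point outside every such thin set.

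The main obstacle, then, is precisely this last point — extracting genuine regularity (not merely positivity of small-ball mass) from the crude domination $\mutil\ge\delta_1(\nu\otimes\tau)$ when $\tau$ may be very thin near $1$. I expect the clean way around it is to quote directly the characterization of regularity in terms of capacity of the exceptional set (Theorem 4.1.1 or Criterion $\Lambda$ in \cite{StaTo}) rather than any pointwise doubling/polynomial estimate, together with the conformal-invariance reduction to $\bard$ and the standard removability of finitely many point masses. I would also remark that this crude regularity is all that is needed downstream — the sharp rate $c_{qn}(\tau)$ comes later from the Faber-polynomial upper bound and the subharmonicity lower bound, not from this proposition.
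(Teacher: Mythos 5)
There is a genuine gap, and it sits exactly where you suspected it would. Your reduction to the disk and the domination $\mutil\geq\delta_1(\nu\otimes\tau)$ near $\partial\bbD$ are fine, but the final step --- deducing regularity from the fact that $\mutil(B(\eitheta,r))\geq\delta_1\nu(I_r)\tau([1-r,1])>0$ for every $r>0$ --- does not work. Positivity of the mass of every ball centered at a point of the support is automatic from the definition of support, and there exist non-regular measures whose support is all of $\bard$ (or $[0,1]$); so positivity alone can never ``place a point outside every thin set.'' The ball-mass criteria in \cite{StaTo} (Criteria $\Lambda$, $\Lambda^*$) genuinely require a lower bound of the form $\mu(\Delta_r(z))\geq r^{C}$ on a set of full capacity, and this can fail here because $\tau([1-r,1])$ may decay faster than any power of $r$. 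Moreover, the ``iff'' characterization you invoke does not exist in that form: no necessary-and-sufficient measure-theoretic criterion for regularity is known, which is precisely why one cannot argue by showing the exceptional set is small ``for free.''

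The paper avoids the quantitative issue entirely by verifying \emph{Widom's criterion}: every Borel carrier $C$ of $\mu$ satisfies $\cpct(C)=\cpct(\supp(\mu))$ (Section 4.1 and Theorem 4.1.6 in \cite{StaTo}). The point is that a carrier must receive \emph{all} of the mass, however little there is: since $\mu$ pushes forward $h(\nu\otimes\tau)$ and $\nu'(\theta)>0$ a.e., any carrier $C$ meets $\tau$-almost every level curve $\Gamma_r$ in a set of full arc-length, hence of full equilibrium measure $\omega_{\Gamma_r}$, hence $\cpct(C)\geq\cpct(\Gamma_r)=r$ for a sequence $r_n\rightarrow1$. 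This argument is completely insensitive to how thin $\tau$ is near $1$ --- it only uses that $\supp(\tau)$ accumulates at $1$ and that the angular density is a.e.\ positive --- and it also handles the finitely many mass points and $\sigma_1$, $\sigma_2$ without a separate removability step. If you want to complete your proof, replace the thinness argument by this carrier-capacity argument; the rest of your outline is then unnecessary scaffolding.
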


\begin{proof}
We will in fact show that $\mu$ satisfies Widom's criterion (see Section 4.1 in \cite{StaTo}) from which regularity immediately follows by Theorem 4.1.6 in \cite{StaTo}.

For each $r\in(\rho,1]$, the equilibrium measure of the curve $\Gamma_r$ is absolutely continuous with respect to arc-length measure with continuous derivative bounded above and below by positive constants (see Theorem II.4.7 in \cite{GarnMar}; the constants are allowed to depend on $r$).  Let $C$ be a carrier of $\mu$ (i.e. $\mu(C)=\mu(\bbC)$).  Since $\nu'(\theta)>0$ Lebesgue almost everywhere, we conclude that
\[
\lambda_r(C\cap\Gamma_r)=\ell(\Gamma_r)
\]
for $\tau$ almost every $r\in(\rho,1]$ where $\lambda_r$ is arc-length measure on $\Gamma_r$ and $\ell(\Gamma_r)$ is the length of the curve $\Gamma_r$.  It follows that there is a sequence $r_n\rightarrow1$ such that $\omega_{\Gamma_{r_n}}(C)=1$ while clearly $\cpct(\Gamma_{r_n})\rightarrow1$.  This shows $\mu$ satisfies Widom's criterion.
\end{proof}

We will now begin developing the ideas necessary to prove the more refined estimate of $\|P_n(\mu)\|^q_{\mu}$ given in Theorem \ref{bigone}.  We begin with a lemma that immediately highlights the importance of Faber polynomials to our results.

\begin{lemma}\label{momentous}
Let $\mcn\subseteq\bbN$ be a subsequence such that
\[
w\mbox{-}\lim_{{\nri}\atop{n\in\mcn}}\frac{|F_n(z)|^qd\mu(z)}{a_n}=d\gamma
\]
where $\gamma$ is a measure on $\partial G$ and $\{a_n\}_{n\in\bbN}$ is a sequence of positive real numbers satisfying
$\lim_{\nri}a_na_{n+1}^{-1}=1$.  Then for any fixed $k\in\bbN$, we have
\[
w\mbox{-}\lim_{{\nri}\atop{n\in\mcn}}\frac{|F_{n-k}(z)|^qd\mu}{a_n}=d\gamma.
\]
\end{lemma}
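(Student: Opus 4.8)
The plan is to compare the two sequences of measures $\frac{|F_n(z)|^q\,d\mu}{a_n}$ and $\frac{|F_{n-k}(z)|^q\,d\mu}{a_n}$ directly, exploiting the near-multiplicative structure of Faber polynomials away from $\barg$ together with the regularity of $\mu$ established in Proposition \ref{nuregreg}. It suffices to treat $k=1$ and iterate, since $\lim_{\nri}a_na_{n+1}^{-1}=1$ forces $\lim_{\nri}a_na_{n+j}^{-1}=1$ for every fixed $j$. Testing weak convergence against an arbitrary $f\in C(\barg)$, I would try to show
\[
\lim_{{\nri}\atop{n\in\mcn}}\frac{1}{a_n}\int_{\barg}f(z)\Bigl(|F_n(z)|^q-|F_{n-1}(z)|^q\Bigr)\,d\mu(z)=0.
\]
Since $\supp(\mu)\subseteq\barg$ together with finitely many exterior points $z_j$, and the contribution of each pure point $\alpha_j\delta_{z_j}$ is $\alpha_j f(z_j)a_n^{-1}(|F_n(z_j)|^q-|F_{n-1}(z_j)|^q)$, which is handled separately using $|F_n(z_j)|\sim|\phi(z_j)|^n$ and $a_{n}a_{n-1}^{-1}\to1$, the heart of the matter is the part of the integral over $\barg$ itself.

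On $\barg$ I would split into the ``collar'' $G_\rho=\psi(A_\rho)$ near the boundary and the deep interior $G\setminus G_\rho$. On the collar, writing $z=\psi(re^{i\theta})$ with $r\in[\rho,1]$, fact (\ref{faberdecay}) gives $F_n(\psi(re^{i\theta}))=r^ne^{in\theta}+O(\tilde\rho^n)$ and likewise for $F_{n-1}$, so $|F_n(z)|^q=r^{qn}+O(r^{qn}\tilde\rho^n/r^n)+\cdots$; more usefully, $F_{n}(z)/F_{n-1}(z)=re^{i\theta}+o(1)$ uniformly in the part of the collar where $|F_{n-1}|$ is not too small, giving $|F_n(z)|^q=r^q|F_{n-1}(z)|^q(1+o(1))$. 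Since $r\le1$, this yields $|F_n(z)|^q-|F_{n-1}(z)|^q\le o(1)\cdot|F_{n-1}(z)|^q+$ (an error from the region where $|F_{n-1}|$ is small, which is of size $O(\tilde\rho^{qn})$ in sup norm and hence negligible against $a_n$ once we know $a_n$ does not decay super-exponentially). The total mass $\frac1{a_n}\int_{G_\rho}|F_{n-1}(z)|^q\,d\mu$ is bounded: it converges along a further subsequence to a finite measure's total mass, because the full sequence $\frac{|F_{n-1}|^qd\mu}{a_{n-1}}$ converges weakly to $\gamma$ and $a_{n-1}/a_n\to1$. So this piece contributes $o(1)$ after integrating against the bounded function $f$.

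For the deep interior $G\setminus G_\rho$ and for $\sigma_1$, the point is that $|F_n(z)|$ grows strictly slower than the boundary rate: by conformal invariance of the Green function and fact (\ref{faberdecay}), $\limsup_n|F_n(z)|^{1/n}=e^{g_{\barc\setminus\barg}(\psi^{-1}\text{-side})}<1$ uniformly on compact subsets of $G$, so $\sup_{G\setminus G_\rho}|F_n(z)|^q\le C\eta^{qn}$ for some $\eta<1$ strictly less than $1$; combined with the lower bound on $a_n$ coming from regularity of $\mu$ — Proposition \ref{nuregreg} gives $\|P_n(\mu)\|_\mu^{1/n}\to1$, and by the extremal property $\|P_n(\mu)\|_\mu^q\le\int|F_n|^q\,d\mu\le a_n\cdot(\text{bounded})$, so $a_n$ cannot decay faster than any exponential $\eta^{qn}$ with $\eta<1$ — this whole region contributes $o(1)$. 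The main obstacle I anticipate is making rigorous the ``region where $|F_{n-1}|$ is small'' caveat in the collar estimate and confirming that $a_n$ decays subexponentially; I would resolve the latter by noting $a_n\ge c\,\|P_n(\mu)\|_\mu^q/(\text{const})$ is false in general but $a_n\gtrsim \gamma(\partial G)\cdot$(something) once $\gamma\ne0$ — and if $\gamma=0$ the statement is trivial — so one reduces to the case $\gamma(\partial G)>0$, where $\frac1{a_n}\int|F_{n-1}|^qd\mu$ is bounded below and the ratio $a_{n-1}/a_n\to1$ pins down $a_n$ up to subexponential factors via the weak limit. Assembling the collar, interior, and pure-point contributions then gives the claim.
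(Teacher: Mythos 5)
Your overall strategy is the same as the paper's: compare $F_{n-k}$ to $F_n$ through their ratio, which by fact (\ref{faberdecay}) tends to $\phi(z)^{-k}$ uniformly on the collar $G_\rho$ and hence has modulus tending to $1$ on $\partial G$, and dispose of the deep interior because $\sup_{\barg\setminus G_\rho}|F_n|=O(\rho^n)$ while $a_{n+1}/a_n\to1$ already forces $a_n^{1/n}\to1$ (this last fact needs nothing about regularity of $\mu$ or the extremal property, so your detour through Proposition \ref{nuregreg} is unnecessary). However, two steps of your execution have genuine gaps. First, you justify the boundedness of $a_n^{-1}\int_{G_\rho}|F_{n-1}|^q\,d\mu$ by asserting that ``the full sequence $\frac{|F_{n-1}|^qd\mu}{a_{n-1}}$ converges weakly to $\gamma$''; the hypothesis gives weak convergence only along the subsequence $\mcn$, and $n-1$ need not belong to $\mcn$, so this is not available. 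Second, your collar estimate $|F_n|^q-|F_{n-1}|^q\le o(1)\,|F_{n-1}|^q$ is only one-sided: where $r=|\phi(z)|$ stays bounded away from $1$ the difference is genuinely of size $(r^q-1)|F_{n-1}|^q$, negative and not small relative to $|F_{n-1}|^q$, and since $f$ may change sign a one-sided bound does not control $\int f\,(|F_n|^q-|F_{n-1}|^q)\,d\mu/a_n$. What actually kills that part of the collar is the hypothesis itself: $\gamma$ is carried by $\partial G$, so the measures $\frac{|F_n|^qd\mu}{a_n}$, $n\in\mcn$, put asymptotically no mass on the closed set $\{z\in G_\rho:|\phi(z)|\le1-\epsilon\}$ --- a fact you never invoke.

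Both gaps disappear if you keep $\frac{|F_n|^qd\mu}{a_n}$ (the measure you actually know converges) as the reference measure, which is exactly what the paper does: on $G_\rho$ write
\[
\int_{G_\rho}f\,\frac{|F_n|^q-|F_{n-k}|^q}{a_n}\,d\mu
=\int_{G_\rho}f(z)\left(1-\frac{|F_{n-k}(z)|^q}{|F_n(z)|^q}\right)\frac{|F_n(z)|^q}{a_n}\,d\mu(z),
\]
observe that the bracketed factor converges uniformly on $G_\rho$ to the continuous function $1-|\phi(z)|^{-qk}$, which vanishes on $\partial G\supseteq\supp\gamma$, and conclude from weak convergence along $\mcn$ that the right-hand side tends to $\int_{\partial G}f\,(1-|\phi|^{-qk})\,d\gamma=0$. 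This single identity replaces your case analysis on where $|F_{n-1}|$ is small, the sign discussion, and the appeal to full-sequence convergence. Your treatment of exterior pure points has the same flavor of problem: $|F_n(z_j)|^q/a_n\sim|\phi(z_j)|^{qn}/a_n\to\infty$ because $a_n$ is subexponential and $|\phi(z_j)|>1$, so such points cannot carry $\mu$-mass compatibly with the hypothesis; they are excluded by it rather than ``handled separately'' as you claim.
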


\begin{proof}  Recall our notation $G_{\rho}=\{\psi(z):\rho\leq|z|\leq1\}$.
It is clear from our earlier discussion of Faber polynomials (specifically fact (\ref{faberdecay})) that all weak limits in question are measures on $\partial G$ and that $F_n$ has no zeros in $G_{\rho}$ for all sufficiently large $n$.  Now, let $f$ be a continuous function on $G_{\rho}$.  We have
\begin{align*}
\int_{G_{\rho}}f(z)\frac{|F_n(z)|^q}{a_n}d\mu(z)-&\int_{G_{\rho}}f(z)\frac{|F_{n-k}(z)|^q}{a_n}d\mu(z)=\\
&=\int_{G_{\rho}}f(z)\left(1-\frac{|F_{n-k}(z)|^q}{|F_n(z)|^q}\right)\frac{|F_n(z)|^q}{a_n}d\mu(z)\\
&=\int_{G_{\rho}}f(z)\left(1-\frac{|\phi(z)|^{q(n-k)}+O(\tilde{\rho}^n)}{|\phi(z)|^{qn}+O(\tilde{\rho}^n)}\right)\frac{|F_n(z)|^q}{a_n}d\mu(z)\\
&\rightarrow\int_{\partial G}f(z)\left(1-|\phi(z)|^{-qk}\right)d\gamma(z)\\
&=0
\end{align*}
since $|\phi(z)|=1$ when $z\in\partial G$.
\end{proof}

Our next lemma will identify some ideal choices for the sequence $\{a_n\}_{n\in\bbN}$ of Lemma \ref{momentous}.

\begin{lemma}\label{onemoment}
Let $\gamma$ be a probability measure on the unit interval $[0,1]$, let $c_n$ denote the $n^{th}$ moment of $\gamma$.  The following are equivalent:
\begin{enumerate}
\item\label{suppq}  $1\in\supp(\gamma)$,
\item\label{nthrot}  $\lim_{\nri}c_n^{1/n}=1$,
\item\label{ratoi}  $\lim_{\nri}c_{q(n+1)}c_{qn}^{-1}=1$.
\end{enumerate}
\end{lemma}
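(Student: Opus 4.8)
The plan is to prove the three equivalences by the cycle $(\ref{suppq})\Rightarrow(\ref{nthrot})\Rightarrow(\ref{ratoi})\Rightarrow(\ref{suppq})$, exploiting that $\gamma$ is a probability measure on $[0,1]$ so that $0\le c_n\le 1$ and the sequence $(c_n)$ is nonincreasing. Note also that since $q>0$ is fixed, the sequence $(c_{qn})_{n\in\bbN}$ is just a subsequence (with possibly non-integer indices, but the moments $c_t=\int_0^1 x^t\,d\gamma(x)$ make sense for all real $t\ge 0$) of the monotone family $(c_t)_{t\ge0}$, so statements about $c_n^{1/n}$ and $c_{qn}^{1/(qn)}$ have the same content.

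For $(\ref{suppq})\Rightarrow(\ref{nthrot})$: if $1\in\supp(\gamma)$, then for every $\ve>0$ we have $\gamma([1-\ve,1])=:\delta_\ve>0$, so
\[
c_n=\int_0^1 x^n\,d\gamma(x)\ge \int_{[1-\ve,1]}x^n\,d\gamma(x)\ge (1-\ve)^n\delta_\ve,
\]
whence $\liminf_{\nri}c_n^{1/n}\ge 1-\ve$; letting $\ve\to0$ and using $c_n\le1$ gives $\lim_{\nri}c_n^{1/n}=1$. The implication $(\ref{nthrot})\Rightarrow(\ref{ratoi})$ is the one requiring a monotonicity argument rather than just a root test: from $c_{qn}\le c_{q(n-1)}\le\cdots$ one does not immediately get ratio convergence, so I would instead argue by log-convexity. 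By H\"older's inequality the map $t\mapsto \log c_t$ is convex on $[0,\infty)$ (this is the standard log-convexity of moments), and it is nonincreasing and bounded below by $\log c_0=0$ is false in general — rather it is bounded below by $\log\gamma(\{1\})$ which could be $-\infty$; instead use that a convex nonincreasing function $\Lambda(t)=\log c_t$ with $\Lambda(t)/t\to 0$ (which is exactly $(\ref{nthrot})$) must have increments $\Lambda(t+1)-\Lambda(t)\to 0$. Indeed convexity gives that the increments $\Lambda(t+1)-\Lambda(t)$ are nondecreasing in $t$, they are $\le 0$, and their Ces\`aro-type averages $\frac1N\sum_{k=0}^{N-1}(\Lambda(k+1)-\Lambda(k))=\Lambda(N)/N\to0$; a nondecreasing sequence bounded above by $0$ whose averages tend to $0$ must itself tend to $0$. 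Applying this along the arithmetic progression $t=qn$ yields $\log c_{q(n+1)}-\log c_{qn}\to0$, i.e. $(\ref{ratoi})$.

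For $(\ref{ratoi})\Rightarrow(\ref{suppq})$ I would argue by contraposition: suppose $1\notin\supp(\gamma)$, so $\supp(\gamma)\subseteq[0,R]$ for some $R<1$ (here we use that $[0,1]$ is compact and the support is closed). Then $c_n\le R^n$, so $c_n^{1/n}\le R<1$ for all $n$, and in fact $\limsup c_n^{1/n}\le R$; combined with the general fact that for a nonincreasing positive sequence the ratios satisfy $\liminf c_{q(n+1)}c_{qn}^{-1}\le \limsup (c_{qn})^{1/(qn)}\le R<1$, this contradicts $(\ref{ratoi})$. More directly: $(\ref{ratoi})$ forces $c_{qn}^{1/(qn)}\to 1$ by Ces\`aro (the $n$-th root of a product of ratios each tending to $1$), but $c_{qn}^{1/(qn)}\le R<1$, a contradiction.

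The step I expect to be the only genuine obstacle is $(\ref{nthrot})\Rightarrow(\ref{ratoi})$, since root convergence does not in general imply ratio convergence for arbitrary sequences; the resolution is precisely the log-convexity (H\"older) of the moment sequence, which upgrades the root statement to the ratio statement. Everything else is elementary estimation using $0\le c_t\le1$, monotonicity of $t\mapsto c_t$, and compactness of $\supp(\gamma)$ inside $[0,1]$. I would also remark at the end that the same proof shows $(\ref{ratoi})$ is equivalent to $\lim_{\nri}c_{qn}c_{q(n-k)}^{-1}=1$ for every fixed $k\in\bbN$, which is the form in which the conclusion feeds into Lemma \ref{momentous} (with $a_n=c_{qn}(\tau)$).
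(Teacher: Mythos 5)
Your proof is correct, and for the one nontrivial implication it takes a genuinely different route from the paper. Both you and the paper treat $(\ref{suppq})\Rightarrow(\ref{nthrot})$ and $(\ref{ratoi})\Rightarrow(\ref{suppq})$ as elementary (the paper simply declares them obvious; you write them out, which does no harm), and both identify $(\ref{nthrot})\Rightarrow(\ref{ratoi})$ as the real content. The paper handles that step by writing $c_{qn+q}/c_{qn}=1+c_{qn}^{-1}\int_0^1 r^{qn}(r^q-1)\,d\gamma(r)$ and observing that $(\ref{nthrot})$ forces the normalized measures $c_{qn}^{-1}r^{qn}\,d\gamma(r)$ to converge weakly to $\delta_1$, so the integral of the continuous function $r^q-1$ (which vanishes at $1$) tends to $0$. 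You instead invoke log-convexity of $t\mapsto c_t$ (H\"older) and run a convexity/Ces\`aro argument on $\Lambda(t)=\log c_t$: the increments are nondecreasing, nonpositive, and have vanishing averages, hence vanish. Both mechanisms are sound; the paper's is shorter and more in the spirit of the weak-convergence arguments used elsewhere in Section \ref{product} (e.g.\ Lemma \ref{momentous}), while yours isolates the abstract reason root convergence upgrades to ratio convergence here (log-convexity) and, as you note, immediately yields the stronger statement $c_{qn}c_{q(n-k)}^{-1}\to1$ for fixed $k$, which is what Lemma \ref{momentous} actually consumes. One small presentational slip: your Ces\`aro computation is written for unit increments $\Lambda(k+1)-\Lambda(k)$, whereas $(\ref{ratoi})$ concerns increments of size $q$; you should either run the identical argument with step $q$ (the averages become $\Lambda(Nq)/N\to0$, still a consequence of $(\ref{nthrot})$) or note that $\Lambda'$ is nondecreasing, nonpositive, and tends to $0$, so $\Lambda(t+q)-\Lambda(t)=\int_t^{t+q}\Lambda'\to0$. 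Either fix is one line, so this is not a genuine gap.
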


\begin{proof}
It is obvious that (\ref{suppq})$\Rightarrow$(\ref{nthrot}) and (\ref{ratoi})$\Rightarrow$(\ref{suppq}) so we need only prove that (\ref{nthrot})$\Rightarrow$(\ref{ratoi}).  To this end, we have
\[
\frac{c_{qn+q}}{c_{qn}}=1+\frac{\int_0^1r^{qn}(r^q-1)d\gamma(r)}{\int_0^1r^{qn}d\gamma(r)}.
\]
If $\lim_{\nri}c_n^{1/n}=1$ then the measures $\frac{r^{qn}d\gamma(r)}{\int_0^1r^{qn}d\gamma(r)}$ converge weakly to the point mass at $1$ as $\nri$, which implies the desired conclusion.
\end{proof}

Now we can prove the following lemma, which will be of critical importance in our proof of Theorem \ref{bigone}.

\begin{lemma}\label{weakboundary}
Let $\kappa$ be a measure on $\barg$ and $\gamma$ a measure on $\partial\bbD$ and let $\mcn\subseteq\bbN$ be a subsequence such that
\[
w\mbox{-}\lim_{{\nri}\atop{n\in\mcn}}\frac{|F_n(z)|^q}{a_n}d\kappa=d(\psi_*\gamma)
\]
where $\{a_n\}_{n\in\bbN}$ is as in Lemma \ref{momentous}.
Then
\[
\limsup_{{\nri}\atop{n\in\mcn}}\frac{\|P_n(\kappa)\|^q_{\kappa}}{a_n}\leq
\exp\left(\int_0^{2\pi}\log(\gamma'(\theta))\frac{d\theta}{2\pi}\right).
\]
\end{lemma}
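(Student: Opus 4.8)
The plan is to exploit the extremal property of $P_n(\kappa)$: since $F_n$ is monic of degree $n$, we have $\|P_n(\kappa)\|^q_\kappa \le \|F_n\|^q_\kappa = \int_{\barg}|F_n(z)|^q\,d\kappa(z)$. However, this naive bound alone gives the wrong constant, since it converges to $\gamma(\partial\bbD)$ rather than the Szeg\H{o}-type exponential. The trick is to test instead against $F_n \cdot R(F_{\le k})$ where we build a better competitor by multiplying by a suitable polynomial correction of bounded degree, designed to mimic the reciprocal of an outer function for $\gamma'$. Concretely, fix $\varepsilon>0$; since $\log\gamma'\in L^1(d\theta/2\pi)$, choose a trigonometric polynomial (or rather, its ``analytic completion'') so that a genuine polynomial $\Pi(w)$ of some fixed degree $k$ approximates $S(w)^{-1}$ on $|w|=1$ well enough that $\int_0^{2\pi}|\Pi(e^{i\theta})|^q\gamma'(\theta)\,\frac{d\theta}{2\pi} \le \exp\left(\int_0^{2\pi}\log\gamma'(\theta)\,\frac{d\theta}{2\pi}\right)+\varepsilon$; this is the standard density step behind Szeg\H{o}'s theorem (it is exactly how one proves the ``easy'' direction of Theorem \ref{geronseven}).

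Next I would use the Faber polynomials $F_{n-j}$ for $j=0,1,\dots,k$ to transplant $\Pi$ to the $z$-plane: the polynomial $Q_n(z) := \sum_{j} \pi_j F_{n-j}(z)$ (where $\Pi(w)=\sum_j \pi_j w^{k-j}$, arranged so the top term is $F_n$) is monic of degree $n$, hence a legitimate competitor, so $\|P_n(\kappa)\|^q_\kappa \le \int_{\barg}|Q_n(z)|^q\,d\kappa(z)$. On $G_\rho$, fact (\ref{faberdecay}) gives $F_{n-j}(\psi(w)) = w^{n-j}+O(\tilde\rho^n)$ uniformly, so $Q_n(\psi(w)) = w^{n-k}\Pi(w) + O(\tilde\rho^n)$ there, whence $|Q_n(z)|^q = |\Pi(\phi(z))|^q|F_n(z)|^q(1+o(1))$ uniformly on $G_\rho$ as $n\to\infty$ along $\mcn$ (here I also use that the measure $\frac{|F_n|^q}{a_n}d\kappa$ has total mass staying bounded, together with $a_n a_{n+1}^{-1}\to 1$ to control the swap between $|F_n|$ and $|F_{n-j}|$, as in Lemma \ref{momentous}). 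Dividing by $a_n$ and passing to the weak limit along $\mcn$, the contribution from $G_\rho$ tends to $\int_{\partial G}|\Pi(\phi(z))|^q\,d(\psi_*\gamma)(z) = \int_0^{2\pi}|\Pi(e^{i\theta})|^q\gamma'(\theta)\,\frac{d\theta}{2\pi} \le \exp\left(\int_0^{2\pi}\log\gamma'(\theta)\,\frac{d\theta}{2\pi}\right)+\varepsilon$, using $\phi\circ\psi=\mathrm{id}$ on $\partial\bbD$ and the definition of $\psi_*\gamma$.

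The remaining task is to show the part of $\int|Q_n|^q\,d\kappa$ coming from $\barg\setminus G_\rho$ (i.e.\ from the ``interior'' $\mcg_\rho$) is negligible after dividing by $a_n$. On that compact set $F_n(z)$ grows only like $e^{n\,g_{\barc\setminus\barg}(z;\infty)}$ which is bounded there (Green's function vanishes inside $\barg$; more precisely on $\mcg_\rho$ one has $|\phi|\le\rho<1$ so $|F_n|$ is uniformly bounded), so $\int_{\mcg_\rho}|Q_n|^q\,d\kappa = O(1)$; since $1\in\supp\tau$ forces $a_n^{1/n}\to 1$ but more importantly $a_n$ decays subexponentially while... actually $a_n$ need not decay, so I must argue differently: the weak-limit hypothesis already tells us $\frac{|F_n|^q}{a_n}d\kappa$ converges to a measure supported on $\partial G$, so in particular $\frac{1}{a_n}\int_{\mcg_\rho}|F_n|^q\,d\kappa\to 0$, and combined with the uniform boundedness of the $\pi_j$ and of $|F_{n-j}/F_n|$ off a neighborhood of $\partial G$ this gives $\frac{1}{a_n}\int_{\mcg_\rho}|Q_n|^q\,d\kappa\to 0$ as well. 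Letting $\varepsilon\to 0$ finishes the proof. The main obstacle I anticipate is the bookkeeping in the middle step: making the replacement $|Q_n(z)|^q \approx |\Pi(\phi(z))|^q|F_n(z)|^q$ rigorous \emph{uniformly} and passing it through the weak limit requires care, because $q$ may be less than $1$ (so $|a+b|^q$ is not comparable to $|a|^q+|b|^q$ in the convenient direction) — one handles this via the elementary inequality $||a+b|^q-|a|^q|\le |b|^q$ for $0<q\le 1$ and a local Lipschitz bound for $q>1$, applied with $b = O(\tilde\rho^n)$, but it must be combined correctly with the growth of $|F_n|$ on $G_\rho$, which is where $\rho>\tilde\rho$ is used.
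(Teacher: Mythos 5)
Your overall strategy --- test the extremal property against a Faber-transplanted correction of bounded degree, use the weak convergence of $|F_{n-j}|^q a_n^{-1}d\kappa$ to evaluate the limit on $\partial G$, and kill the interior contribution separately --- is in the same family as the paper's argument, and most of the bookkeeping you flag (the $O(\tilde\rho^n)$ errors, the $q<1$ inequalities, the negligibility of $\mcg_{\rho}$, using $a_n^{1/n}\to1$) is indeed manageable. But there is one genuine gap: you identify the limiting boundary contribution as $\int_0^{2\pi}|\Pi(\eitheta)|^q\gamma'(\theta)\,\dtpi$. By the definition of the push-forward, $\int_{\partial G}|\Pi(\phi(z))|^q\,d(\psi_*\gamma)(z)=\int_{\partial\bbD}|\Pi(w)|^q\,d\gamma(w)$, and this includes the \emph{singular} part of $\gamma$. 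The lemma places no absolute continuity hypothesis on $\gamma$ (and in the application to Theorem \ref{bigone} the angular measure is explicitly allowed a singular component), while a polynomial $\Pi$ chosen merely to approximate $S^{-1}$ on the circle controls only the absolutely continuous contribution; the term $\int|\Pi|^q\,d\gamma_{\mathrm{s}}$ is completely uncontrolled and can dominate, so your upper bound fails. Making a fixed-degree monic polynomial simultaneously near-optimal for $\gamma'\,\dtpi$ \emph{and} asymptotically negligible against $\gamma_{\mathrm{s}}$ is exactly the hard content of Szeg\H{o}'s theorem in the presence of a singular part, and it is not delivered by the ``easy direction'' density step you invoke.

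The paper sidesteps this entirely: its competitor is $F_{n-k}(z)P_k(z;\psi_*\gamma)$, i.e.\ the correction factor is the monic $L^q(\psi_*\gamma)$-extremal polynomial itself rather than an approximant of $S^{-1}$. Lemma \ref{momentous} then gives $a_n^{-1}\int_{\barg}|P_k|^q|F_{n-k}|^q\,d\kappa\to\|P_k(\psi_*\gamma)\|^q_{\psi_*\gamma}$ directly (no separate interior estimate is needed, since $|P_k|^q$ is continuous on $\barg$ and the weak limit is supported on $\partial G$), and letting $k\to\infty$ one invokes Theorem \ref{geronseven}, which is precisely the statement that the extremal polynomials asymptotically do not see $\gamma_{\mathrm{s}}$. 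If you replace your $\Pi$ by the monic extremal polynomial $P_k(\gamma,q)$ and cite Theorem \ref{geronseven} in place of the outer-function approximation, your argument closes and essentially reduces to the paper's proof.
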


\begin{proof}
By the extremal property, we have $\|P_n(\kappa)\|^q_{\kappa}\leq \|F_{n-k}(z)P_k(\psi_*\gamma)\|^q_{\kappa}$.  By Lemma \ref{momentous}, we can write
\[
\int_{\barg}\frac{|P_k(z;\psi_*\gamma)|^q|F_{n-k}(z)|^q}{a_n}d\kappa(z)
\rightarrow\int_{\partial G}|P_k(z;\psi_*\gamma)|^qd(\psi_*\gamma)
\]
as $\nri$ through $\mcn$.
Therefore
\[
\limsup_{{\nri}\atop{n\in\mcn}}a_{n}^{-1}\|P_n(z;\kappa)\|^q_{\kappa}
\leq \|P_k(\psi_*\gamma)\|^q_{\psi_*\gamma}
\]
for every $k>0$.  Since $k$ here is arbitrary, we can take the infimum over all $k$, which is no larger than the limit as $k$ tends to infinity.  The result now follows from Theorem \ref{geronseven}.
\end{proof}



As a second preparatory step for the proof of Theorem \ref{bigone}, we need to understand how to deal with pure points outside of $\barg$.  The following lemma is a consequence of the remark following the statement of Theorem 1 in \cite{What} (although Theorem 1 in \cite{What} is only stated for the orthonormal polynomials ($q=2$), the same proof works for the extremal polynomials in any $L^q(\mu)$ space).

\begin{lemma}\label{closetozone}\cite{What}
Let $\mu$ be a finite measure carried by $\barg\bigcup\{z_1,\ldots,z_m\}$ where $\barg$ is simply connected and each $z_i\not\in\barg$.  Then for any $\delta>0$, there is $N_\delta$ such that if $n>N_{\delta}$ then $\{u:|u-z_i|<\delta\}$ has at least one zero of $P_n(\mu)$ for each $i=1,2,\ldots,m$.
\end{lemma}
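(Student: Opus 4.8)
The statement to prove, Lemma \ref{closetozone}, asserts that for a measure $\mu$ carried by a simply connected compact set $\barg$ together with finitely many mass points $z_1,\dots,z_m$ outside $\barg$, the extremal polynomials $P_n(\mu,q)$ eventually have a zero in every prescribed neighborhood of each $z_i$. The plan is to argue by contradiction using a competitor polynomial: if $P_n$ has no zero near some $z_i$, then $|P_n(z_i)|$ cannot be too small relative to the norm, but one can modify $P_n$ by dividing out a linear factor that is large near $z_i$ and multiplying by a suitable Faber-type correction to restore the degree and monic leading coefficient, thereby strictly beating the extremal norm for large $n$.

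More concretely, the first step is to record the lower bound coming from the pure point itself: since $\mu(\{z_i\}) = \alpha_i > 0$ (or at least positive for the mass points under consideration — and if $\alpha_i = 0$ the statement is about points in the carrier, so we may assume mass is present, which is the interesting case), we have $\|P_n(\mu)\|^q_\mu \ge \alpha_i |P_n(z_i)|^q$, and by regularity-type upper bounds (or simply by testing against $F_n$, using fact (\ref{faberdecay}) and that $\mu$ has finite mass) we have $\|P_n(\mu)\|_\mu^q \le C^n$ for a constant controlled by $\max(1, \max_i |\phi(z_i)|)$; this pins $|P_n(z_i)|$ between quantities of controlled exponential growth. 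The second step is the geometric heart: suppose for contradiction that along a subsequence $P_n$ has no zero in $D_i := \{|u - z_i| < \delta\}$. Factor $P_n(z) = (z - \zeta_n) R_{n-1}(z)$ where $\zeta_n$ is a zero of $P_n$ of largest modulus; since $|\phi|$ is increasing in modulus away from $\barg$ and $P_n$ equidistributes toward the equilibrium measure of $\barg$ (which lives on $\partial G$), such a zero $\zeta_n$ stays bounded and in fact $|\phi(\zeta_n)|$ is bounded — but crucially, by the assumed absence of zeros near $z_i$, replacing the factor $(z-\zeta_n)$ by $(z - z_i)$ scaled appropriately changes the value at points of $\barg$ by a bounded multiplicative factor while decreasing the value at $z_i$ to zero, hence decreasing the dominant contribution $\alpha_i|P_n(z_i)|^q$ to the norm.

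The cleanest way to make the competitor rigorous is probably the one indicated by the reference \cite{What}: one shows the normalized counting measures of zeros of $P_n$ converge weakly to $\omega_{\barg}$ (this uses regularity of $\mu$, which by Proposition \ref{nuregreg} holds here — more precisely the reference \cite{What} establishes zero-attraction of extremal polynomials to mass points), and then the finitely many ``extra'' zeros must account for the extra factors $|\phi(z_i)|^q$ appearing in norm asymptotics. Since the excerpt tells us the lemma is a consequence of the remark following Theorem 1 in \cite{What} and that its proof extends verbatim from $q=2$ to general $q>0$, the honest proof is a citation together with a one-line check that every ingredient (subharmonicity of $\log|P_n|$, the extremal property, finiteness of $\mu$, and the potential-theoretic setup) is insensitive to the value of $q$ — indeed the extremal property and the $n$-th root behavior $\|P_n\|_\mu^{1/n}\to \cpct(\supp\mu)$ (regularity) are all we use, and these hold for all $q>0$.

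The main obstacle is not any single estimate but making the ``improved competitor'' genuinely beat $P_n$ for all large $n$ simultaneously (not just along a subsequence), i.e. controlling the trade-off: dividing by $(z-z_i)$ and re-monic-izing costs a bounded multiplicative factor on $\barg$ uniformly, whereas it must save a factor bounded away from $1$ at $z_i$; this requires knowing that without a nearby zero, $|P_n(z_i)|$ is comparable to $\sup_{\barg}|P_n| \cdot |\phi(z_i)|^n$ up to subexponential factors, which is exactly the content of the Bernstein--Walsh-type lower bound for polynomials regular on $\barg$. Once that comparison is in hand the contradiction is immediate, so I expect to spend most of the effort citing \cite{What} and verifying the $q$-independence rather than reproving the potential theory from scratch.
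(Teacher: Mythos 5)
Your proposal is correct and matches the paper's treatment: the paper gives no independent argument for Lemma \ref{closetozone} but simply derives it from the remark following Theorem 1 in \cite{What}, noting that the $q=2$ proof there carries over verbatim to general $q>0$, which is exactly the conclusion you reach. The competitor-polynomial sketch you include is a fair summary of the underlying Saff--Totik argument, but the paper itself does not reproduce it, so no further comparison is needed.
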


\noindent\textit{Remark.}  We will refine Lemma \ref{closetozone} later in this section (see Corollary \ref{closertozone}).

\vspace{2mm}

\noindent It is also shown in \cite{What} that pure points of $\mu$ inside $G$ need not attract zeros of $P_n(\mu)$.  The case of pure points on the boundary of $G$ is more subtle (see Section 10.13 in \cite{OPUC2} for more results on point perturbation). 

The following calculation will be useful also.

\begin{proposition}\label{psiint}
If $x\not\in G$ and $r\in[\rho,1]$ then
\[
\int_0^{2\pi}\log|\psi(r\eitheta)-x|^q\dtpi=\log|\phi(x)|^q.
\]
\end{proposition}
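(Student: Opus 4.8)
The plan is to recognize the left-hand side as the logarithmic potential of the push-forward measure $\psi_*(\tfrac{d\theta}{2\pi})$ evaluated at the point $x$, and then to use that this push-forward is exactly the equilibrium measure of $\barg$ (as recalled in Section~\ref{background} via Theorem 3.1 of \cite{ToTrans}). Concretely, writing $\omega = \psi_*(\tfrac{d\theta}{2\pi})$ when $r=1$, one has
\[
\int_0^{2\pi}\log|\psi(\eitheta)-x|\,\dtpi = -U^{\omega}(x) = g_{\barc\setminus\barg}(x;\infty),
\]
where the last equality uses \eqref{greedef} together with $\cpct(\barg)=1$. By conformal invariance of the Green function (Theorem 4.4.4 in \cite{Ransford}, as stated in the excerpt), $g_{\barc\setminus\barg}(x;\infty) = g_{\barc\setminus\bard}(\phi(x);\infty) = \log|\phi(x)|$ for $x\notin G$. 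Multiplying through by $q$ gives the claimed identity in the case $r=1$.

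For general $r\in[\rho,1]$, the same argument applies with $A_r$ in place of $\partial\bbD$: the measure $\psi_*$ of normalized arc-length on $|z|=r$ need not be the equilibrium measure of $\Gamma_r$, so instead I would argue directly. The function $\theta\mapsto\log|\psi(r\eitheta)-x|$ is the boundary value of the harmonic function $z\mapsto\log|\psi(z)-x|$, which is harmonic on $\{\tilde\rho<|z|\}$ (since $\psi$ is univalent there and $x\notin G$ means $\psi(z)\neq x$ for $|z|\geq 1$; for $\tilde\rho<|z|<1$ one must check $\psi(z)\neq x$, which holds because $x\notin G$ and $\psi$ maps $\{|z|>1\}$ onto $\barc\setminus\barg$, while $\psi$ of the annulus $\tilde\rho<|z|<1$ lands inside $\barg$—here I need $x\notin\barg$, not merely $x\notin G$; I will address this point below). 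Granting harmonicity on an annulus containing $\{|z|=r\}$ and $\{|z|=1\}$, the circle-mean
\[
m(r):=\int_0^{2\pi}\log|\psi(r\eitheta)-x|\,\dtpi
\]
satisfies $m(r) = A\log r + B$ for constants $A,B$ on that annulus (the standard fact that circular means of harmonic functions are affine in $\log r$). Evaluating as $r\to\infty$ using $\psi(z)=z+\xi_0+O(1/z)$ gives $\log|\psi(r\eitheta)-x|=\log r + o(1)$ on average, so $A=1$; and the $r=1$ computation above gives $B=\log|\phi(x)|$. Hence $m(r)=\log r + \log|\phi(x)|$...

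—and here is the subtlety that will be the main obstacle: the proposition as stated claims $m(r)=\log|\phi(x)|$ for \emph{all} $r\in[\rho,1]$, with no $\log r$ term, which would force $A=0$. This can only be right if $\log|\psi(r\eitheta)-x|$ is \emph{not} harmonic across the whole annulus, i.e. if $\psi$ does take the value $x$ somewhere in $\{\tilde\rho<|z|<1\}$, or—more likely—if the intended hypothesis is that $x$ lies outside $\barg$ but the relevant harmonic function is $\log|\phi(\cdot)|$-related rather than $\psi$-related. Re-examining: the cleanest correct route is to substitute $w=\phi^{-1}$... actually, set $F(z)=\log|\psi(z)-x|-\log|z|$ for $|z|>\tilde\rho$; then $F$ is harmonic on $\{|z|>1\}$, tends to $\log|\psi'(\infty)|=0$ at $\infty$, and on $|z|=1$ equals $\log|\phi(x)|-0$ by the $r=1$ case; but $F$ is \emph{not} harmonic for $\tilde\rho<|z|<1$ unless $\psi$ omits $x$ there, which it does \emph{not} when $x\in\barg\setminus G=\partial G$. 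So the honest statement requires $x\notin\barg$; granting that, $\psi(z)\neq x$ for all $|z|>\tilde\rho$, $F$ is harmonic and bounded on the annulus, hence (being a mean) affine in $\log r$ with slope $0$ at... no—$F$ harmonic on an annulus still has circular mean affine in $\log r$.

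The resolution I would actually write: once $x\notin\barg$, the function $u(z):=\log|\psi(z)-x|$ is harmonic on the open annulus $\{\tilde\rho<|z|<\infty\}$, and $u(z)-g_{\barc\setminus\barg}(\psi(z);\infty)=u(z)-\log|\phi(x)-\phi(\psi(z))|\cdot(\text{corrections})$—better yet, on $\{|z|>1\}$ one checks the explicit identity $\log|\psi(z)-x| - \log|z - \phi(x)| \to 0$ as $z\to\infty$ and both sides are harmonic there, but $\log|z-\phi(x)|$ with $|\phi(x)|>1$ is harmonic on all of $\{|z|>\tilde\rho\}$ provided $|\phi(x)|>1$, i.e. provided $x\notin\barg$. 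So $u(z)=\log|z-\phi(x)|$ throughout $\{|z|>\tilde\rho\}$ by analytic continuation of the harmonic identity, and then
\[
\int_0^{2\pi}\log|\psi(r\eitheta)-x|\,\dtpi=\int_0^{2\pi}\log|r\eitheta-\phi(x)|\,\dtpi=\log|\phi(x)|
\]
since $|\phi(x)|>r$ forces the mean of $\log|re^{i\theta}-\phi(x)|$ to equal $\log|\phi(x)|$ (Jensen / harmonicity of $\log|\cdot-\phi(x)|$ inside the disk of radius $r$). Raising to the $q$-th power finishes it. The one genuine thing to verify carefully is the harmonic identity $\log|\psi(z)-x|=\log|z-\phi(x)|$ on $\{|z|>\tilde\rho\}$: on $\{|z|>1\}$ it follows because $\dfrac{\psi(z)-x}{z-\phi(x)}$ is analytic, non-vanishing, and tends to $\psi'(\infty)=1$ at $\infty$ (using $\psi(\phi(x))=x$ to cancel the zero), hence has modulus whose log is harmonic and $\to 0$, and—being harmonic on the simply connected $\{|z|>1\}\cup\{\infty\}$ with the right boundary behavior—is identically $0$; then the identity persists on the larger annulus by the identity theorem for the real-analytic functions involved, which is where $x\notin\barg$ (equivalently $\phi(x)$ well-defined with $|\phi(x)|>1$) is essential.
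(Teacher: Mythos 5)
Your $r=1$ computation is correct and is essentially the paper's argument, but the extension to $r<1$ — which is the actual content of the proposition — rests on a false identity. You claim $\log|\psi(z)-x|=\log|z-\phi(x)|$ on $\{|z|>\tilde{\rho}\}$, justified by saying that $v(z):=\log\bigl|\tfrac{\psi(z)-x}{z-\phi(x)}\bigr|$ is harmonic on $\{|z|>1\}\cup\{\infty\}$ and vanishes at $\infty$, hence vanishes identically. A harmonic function on an exterior domain is not determined by its value at one point, and you never supply boundary data on $|z|=1$; in fact, if the identity held on an open set, then $\tfrac{\psi(z)-x}{z-\phi(x)}$ would be an analytic function of constant modulus, hence the constant $1$ (its value at $\infty$), forcing $\psi(z)=z+x-\phi(x)$. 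So the identity characterizes translates of $\bbD$ and fails already for $\psi(z)=z+\tfrac{1}{2z}$ (an ellipse of capacity $1$): there $\tfrac{\psi(z)-x}{z-\phi(x)}\to\psi'(\phi(x))\neq1$ as $z\to\phi(x)$. Your earlier ``affine in $\log r$'' attempt also miscomputes the slope: $u(z)=\log|\psi(z)-x|$ has a logarithmic pole at $z=\phi(x)$ with $|\phi(x)|>1$, so it is harmonic on the two annuli $\tilde{\rho}<|z|<|\phi(x)|$ and $|\phi(x)|<|z|<\infty$ separately, and the slope $A=1$ you read off at $r\to\infty$ belongs to the outer one. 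On the inner annulus the slope is the flux of $u$ through $|z|=r$, i.e.\ the winding number of $\theta\mapsto\psi(r\eitheta)-x$ about $0$, which is $0$ because $x$ is exterior to $\Gamma_r$; that observation, combined with your $r=1$ value, would yield a correct proof — but it is not the one you wrote.

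The observation you dismissed (``the push-forward of normalized arc-length on $|z|=r$ need not be the equilibrium measure of $\Gamma_r$'') is precisely what the paper exploits: $\psi_r(z):=\psi(rz)$ is the exterior conformal map of $\overline{\mcg}_{r}$ with $\psi_r'(\infty)=r$, so by the very Theorem 3.1 of \cite{ToTrans} you invoke at $r=1$, the push-forward of $\dtpi$ under $\psi_r$ \emph{is} $\omega_{\overline{\mcg}_{r}}$, and $\cpct(\overline{\mcg}_{r})=r$. Then $\int_0^{2\pi}\log|\psi(r\eitheta)-x|\,\dtpi=-U^{\omega_{\overline{\mcg}_{r}}}(x)=g_{\barc\setminus\overline{\mcg}_{r}}(x;\infty)+\log\cpct(\overline{\mcg}_{r})=\bigl(\log|\phi(x)|-\log r\bigr)+\log r=\log|\phi(x)|$, the two $\log r$'s cancelling. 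Finally, the proposition assumes only $x\notin G$, so the case $x\in\partial G$ must be treated; you flag this but never resolve it, whereas the paper obtains it from the exterior case by dominated convergence.
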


\begin{proof}
First, consider the case when $x\not\in\barg$.
It is clear that $\cpct(\overline{\mcg}_{r})=r$.  Define $\psi_r(z)=\psi(rz)$ on $\{z:|z|>\tilde{\rho}r^{-1}\}$.  Then we calculate
\begin{align*}
\log|\phi(x)|^q&=\int_0^{2\pi}\log|\eitheta-\phi(x)r^{-1}|^q\dtpi+q\log(r)\\
&=-qU^{\omega_{\bard}}\left(\frac{\phi(x)}{r}\right)+q\log(r)\\
&=qg_{\barc\setminus\bard}(\phi(x)r^{-1},\infty)+q\log(r)\\
&=qg_{\barc\setminus\overline{\mcg}_{r}}(x,\infty)+q\log(\cpct(\overline{\mcg}_{r}))\\
&=\int_{\Gamma_r}\log|y-x|^qd\omega_{\overline{\mcg}_{r}}(y)\\
&=\int_0^{2\pi}\log|\psi_r(\eitheta)-x|^q\dtpi.
\end{align*}
The first line follows from Example 0.5.7 in \cite{SaffTot}.  The second line is just the definition of the logarithmic potential.  The third line then follows from (\ref{greedef}) above and the fact that $\bard$ has logarithmic capacity $1$.  The fourth line then follows from the conformal invariance of the Green's function (see Section \ref{tools}).  The fifth line follows as the third did from the first.  Finally, the last line follows from the definition of equilibrium measure as given in Theorem 3.1 in \cite{ToTrans}.

The case $x\in\partial G$ follows by dominated convergence as in Example 0.5.7 in \cite{SaffTot}.
\end{proof}

Before we proceed with the proof of Theorem \ref{bigone}, we need to make an observation.  The upper bound will be obtained using the above lemmas, while for the lower bound we will invoke subharmonicity of a particular integrand.  This is simple enough when $q\geq1$ because every $H^1$ function is the Poisson integral of its boundary values (see Theorem 17.11 in \cite{Rudin}).  However, some care is required when $0<q<1$.  We simply note here that Theorem 17.11(c) in \cite{Rudin} combined with a well-known $L^q$ inequality (see page 74 in \cite{Rudin}) imply that if $f\in H^q(\barc\setminus\bard)$ then
\begin{align}\label{subharm}
\int_0^{2\pi}|f(\eitheta)|^q\dtpi\geq|f(\infty)|^q.
\end{align}
Now we are ready to prove Theorem \ref{bigone}.

\vspace{2mm}

\noindent\textit{Proof of Theorem \ref{bigone}.}
For now, let us assume that $\ell=m=0$ in our definition of $\mu$.
For any $k\in\bbN$, we have
\begin{align*}
\lim_{\nri}\int_{\barg}\frac{\phi(z)^k|F_n(z)|^q}{c_{qn}(\tau)}d\mu(z)&=
\lim_{\nri}\frac{\int_{\rho}^{1}\int_{0}^{2\pi}r^{k+qn}e^{ik\theta}h(r\eitheta)d\nu(\theta)d\tau(r)}{c_{qn}(\tau)}+
\lim_{\nri}\frac{\int_{\bard}z^{k}|z^{n}|^qd\sigma_2}{c_{qn}(\tau)}+o(1)\\
&=\int_0^{2\pi}e^{ik\theta}h(\eitheta)d\nu(\theta)=\int_{\partial G}\phi(z)^kd(\psi_*(h\nu)).
\end{align*}
It follows that the measures $\frac{|F_n(z)|^2}{c_{qn}(\tau)}d\mu$ converge weakly to $d(\psi_*(h\nu))$ as measures on $\barg$.  The upper bound in this case now follows from Lemma \ref{weakboundary}.

If we add finitely many pure points outside $G$, we get the desired upper bound by placing a single zero at each $z_i$ and $\zeta_i$.  More precisely, if we define the polynomials $y_{\infty}(z)$ and $\Upsilon_{\infty}(z)$ by
\begin{align}\label{ydef}
y_{\infty}(z)=\prod_{j=1}^m(z-z_j)\qquad,\qquad\Upsilon_{\infty}(z)=\prod_{j=1}^m(z-\zeta_j)
\end{align}
we have
\[
\|P_n(\mu)\|^q_{\mu}\leq\|y_{\infty}\Upsilon_{\infty}P_{n-m-\ell}(|y_{\infty}(z)\Upsilon_{\infty}(z)|^q\mu)\|^q_{\mu}=
\|P_{n-m-\ell}(|y_{\infty}(z)\Upsilon_{\infty}(z)|^q\mu)\|^q_{|y_{\infty}(z)\Upsilon_{\infty}(z)|^q\mu}
\]
and then proceed as in the case when $\ell=m=0$ and apply Proposition \ref{psiint}.

For the lower bound, Lemma \ref{closetozone} implies that for each $z_i$, we can choose a sequence $\{w_{i,n}\}_{n\in\bbN}$ so that $P_n(w_{i,n};\mu)=0$ and $\lim_{\nri}w_{i,n}=z_i$ (we will establish later that such a sequence has a unique tail, but we do not need this now).  Define
\begin{align}\label{whydefs}
y_n(z)=\prod_{j=1}^m(z-w_{j,n})
\end{align}
(so that $y_n(z)\rightarrow y_{\infty}(z)$ pointwise).
We now can calculate
\begin{align}\label{philow}
\|P_n(z;\mu)\|^q_{\mu}\geq
\int_{\rho}^1\int_0^{2\pi}\left|\frac{P_{n}(\psi(r\eitheta))}{y_n(\psi(r\eitheta))}\right|^q
\prod_{j=1}^m|\psi(r\eitheta)-w_{j,n}|^qh(r\eitheta)d\nu_{ac}(\theta)d\tau(r)
\end{align}
For $|z|>1$ and $r\in[\rho,1]$, define the functions
\[
S_{r,n}(z)=\exp\left(-\frac{1}{2q\pi}\int_0^{2\pi}\log\left(\prod_{j=1}^m|\psi(r\eitheta)-w_{j,n}|^q
h(r\eitheta)\nu'(\theta)\right)\frac{\eitheta+z}{\eitheta-z}d\theta\right).
\]
By our discussion in Section \ref{tools}, we can rewrite (\ref{philow}) as
\[
\|P_n(z;\mu)\|^q_{\mu}\geq
\int_{\rho}^1\int_0^{2\pi}
\left|\frac{P_{n}(\psi(r\eitheta))}{e^{i(n-m)\theta}y_n(\psi(r\eitheta))}\right|^q|S_{r,n}(\eitheta)|^q\dtpi d\tau(r)
\]
(notice that we arbitrarily added a factor of $e^{-i(n-m)\theta}$ to the integrand, which is acceptable since it is inside the absolute value bars).  For each fixed $r$, we invoke the subharmonicity of the integrand (or equation (\ref{subharm}))
to obtain
\begin{align}\label{lowermoment}
\|P_n(z;\mu)\|^q_{\mu}\geq\int_{\rho}^1r^{qn-qm}|S_{r,n}(\infty)|^q d\tau(r).
\end{align}
Since $w_{j,n}$ converges to $z_j$ as $\nri$ for each $j$ (by construction), we find that
\[
\liminf_{\nri}\frac{\|P_n(z;\mu)\|^q_{\mu}}{c_{qn}(\tau)}\geq
\exp\left(\int_0^{2\pi}\log\left(h(\eitheta)\nu'(\theta))\right)\,\dtpi\right)
\prod_{j=1}^m|\phi(z_j)|^q
\]
by Proposition \ref{psiint}.  This is the desired lower bound.
\begin{flushright}
$\Box$
\end{flushright}

\vspace{2mm}

The proof of Theorem \ref{bigone} produces several interesting corollaries.  The first of these shows that certain parts of the measure $\mu$ contribute only negligibly to the norm of the extremal polynomial.  The following corollary is reminiscent of Theorem 2.4.1(vii) in \cite{OPUC1}.

\begin{corollary}\label{singzero}
If $\mu$ is as in Theorem \ref{bigone} with $\nu$ a Szeg\H{o} measure on $\partial\bbD$ then
\begin{align*}
&\lim_{\nri}\bigg(\int_{\bard}|p_n(\psi(r\eitheta);\mu)|^q\,h(r\eitheta)d\nu_{\textrm{sing}}(\theta)d\tau(r)+
\int_{\barg}|p_n(z;\mu)|^qd\sigma_1(z)\,+\\
&\qquad\qquad\qquad+\int_{\bard}|p_n(\psi(r\eitheta);\mu)|^q\,d\sigma_2(r\eitheta)+
\sum_{j=1}^m\alpha_j|p_n(z_j;\mu)|^q+\sum_{j=1}^{\ell}\beta_j|p_n(\zeta_j;\mu)|^q\bigg)=0.
\end{align*}
\end{corollary}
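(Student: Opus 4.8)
\noindent\textit{Proof proposal for Corollary \ref{singzero}.}
The plan is to read off the statement from two facts already in hand: the normalization $\int |p_n(z;\mu)|^q\,d\mu=1$, and the observation that the lower bound in the proof of Theorem \ref{bigone} was produced using only the absolutely continuous angular part of $\mu$. First I would record the decomposition
\[
\mu=\psi_*(h(\nu_{ac}\otimes\tau))+\psi_*(h(\nu_{\textrm{sing}}\otimes\tau))+\psi_*\sigma_2+\sigma_1+\sum_{j=1}^m\alpha_j\delta_{z_j}+\sum_{j=1}^{\ell}\beta_j\delta_{\zeta_j},
\]
so that, writing $M_n:=\int_{\bard}|p_n(\psi(r\eitheta);\mu)|^q h(r\eitheta)\,d\nu_{ac}(\theta)\,d\tau(r)$ and letting $R_n$ denote the sum of the five terms in the statement of the corollary, we have $1=\int|p_n(z;\mu)|^q\,d\mu=M_n+R_n$. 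Since $R_n\geq0$, this gives $M_n\leq1$, and so it suffices to prove $\liminf_{\nri}M_n\geq1$.

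Next I would revisit the lower bound portion of the proof of Theorem \ref{bigone}. There the estimate began from (\ref{philow}), whose integrand is in fact $|P_n(\psi(r\eitheta);\mu)|^q h(r\eitheta)\nu'(\theta)$ because $\prod_{j=1}^m|\psi(r\eitheta)-w_{j,n}|^q=|y_n(\psi(r\eitheta))|^q$; thus that chain of inequalities bounds $\int_{\bard}|P_n\circ\psi|^q h\,d\nu_{ac}d\tau$ from below, not merely $\|P_n(\mu)\|^q_\mu$. Since $\nu$ is assumed to be a Szeg\H{o} measure, the Szeg\H{o} functions $S_{r,n}$ are well defined, and the argument leading to (\ref{lowermoment}) together with Proposition \ref{psiint} shows
\[
\liminf_{\nri}\frac{1}{c_{qn}(\tau)}\int_{\bard}|P_n(\psi(r\eitheta);\mu)|^q h(r\eitheta)\,d\nu_{ac}(\theta)\,d\tau(r)\;\geq\;\exp\left(\int_0^{2\pi}\log(h(\eitheta)\nu'(\theta))\,\dtpi\right)\prod_{j=1}^m|\phi(z_j)|^q=:L.
\]
By Theorem \ref{bigone} itself, $c_{qn}(\tau)^{-1}\|P_n(\mu)\|^q_\mu\to L$, and $L\in(0,\infty)$ under the present hypotheses.

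Finally I would write $M_n$ as the ratio of $c_{qn}(\tau)^{-1}\int_{\bard}|P_n\circ\psi|^q h\,d\nu_{ac}d\tau$ to $c_{qn}(\tau)^{-1}\|P_n(\mu)\|^q_\mu$; dividing the liminf of the numerator by the (genuine) limit $L$ of the denominator yields $\liminf_{\nri}M_n\geq L/L=1$. Combined with $M_n\leq1$, this forces $M_n\to1$, hence $R_n=1-M_n\to0$, which is exactly the assertion. The only real point requiring care — and the main obstacle, such as it is — is the bookkeeping in the second paragraph: one must check that the lower bound step in the proof of Theorem \ref{bigone} truly invoked nothing beyond the $\psi_*(h(\nu_{ac}\otimes\tau))$ component (so that it delivers a lower bound for $M_n\cdot\|P_n(\mu)\|^q_\mu$ rather than just for $\|P_n(\mu)\|^q_\mu$), and that the Szeg\H{o} assumption on $\nu$ is precisely what keeps the functions $S_{r,n}$ available. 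Everything else is the identity $M_n+R_n=1$.
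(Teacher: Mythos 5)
Your proof is correct, and it rests on the same squeeze that drives the paper's proof: the lower bound in Theorem \ref{bigone} is achieved by a proper sub-measure of $\mu$, while the upper bound holds for all of $\mu$, so the normalized gap must vanish. The difference is organizational, and your version is the more economical one. The paper splits $\mu=\mu^0+\mu^1$ with $\mu^0=\psi_*(h(\nu\otimes\tau))+\sum_j\alpha_j\delta_{z_j}$, applies the squeeze in (\ref{negl}) to conclude only that $\mu^1$ (i.e.\ $\sigma_1$, $\sigma_2$, and the $\zeta_j$) is negligible, and then must run a second, separate computation — comparing $\int|P_n(z)/(z-w_{1,n})|^q|z-w_{1,n}|^q d\mu^0_1$ with the extremal norm of $P_{n-1}$ for the varying weight $|z-w_{1,n}|^q\mu^0_1$ — to dispose of each $\alpha_j|p_n(z_j)|^q$. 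You instead apply the squeeze directly to the minimal sub-measure $\psi_*(h(\nu_{ac}\otimes\tau))$, using your (correct) observation that the integrand of (\ref{philow}) is exactly $|P_n(\psi(r\eitheta))|^qh(r\eitheta)$ against $d\nu_{ac}\,d\tau$, so the chain through (\ref{lowermoment}) and Proposition \ref{psiint} already gives $\liminf M_n\geq1$ with all five terms of $R_n$ excluded from the numerator. This kills the point masses at the $z_j$, the singular part $\nu_{\textrm{sing}}$, and $\mu^1$ in a single stroke, making the paper's second computation unnecessary; it also makes explicit the treatment of $\nu_{\textrm{sing}}$, which the paper's write-up leaves somewhat implicit. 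The only hypotheses you are quietly using — existence of the zeros $w_{j,n}$ for large $n$ via Lemma \ref{closetozone}, and $L\in(0,\infty)$ from the Szeg\H{o} condition on $\nu$ — are exactly the ones the paper uses, so there is no gap.
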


\begin{proof}
Let us write $\mu=\mu^0+\mu^1$ where $\mu^0=\psi_*(h(\nu\otimes\tau))+\sum_{j=1}^m\alpha_j\delta_{z_j}$.  Then
\begin{align}\label{negl}
\frac{\|P_n(\mu)\|^q_{\mu}}{c_{qn}(\tau)}=\frac{\|P_n(\mu)\|^q_{\mu^0}}{c_{qn}(\tau)}+
\frac{\|P_n(\mu)\|^q_{\mu^1}}{c_{qn}(\tau)}.
\end{align}
The proof of Theorem \ref{bigone} shows that the left hand side of (\ref{negl}) and the first term on the right hand side of (\ref{negl}) both converge to the right hand side of (\ref{bigconcl}).  This shows that everything except $\mu^0$ contributes only negligibly to the norm of $p_n(z;\mu)$.  To show that the pure points outside $\barg$ contribute only negligibly to the norm, we keep our definition of $w_{1,n}$ from the proof of Theorem \ref{bigone} and we write $\mu^0=\mu^0_1+\alpha_1\delta_{z_1}$. We can now calculate
\begin{align*}
1&\geq
\frac{\int_{\bbC}\left|\frac{P_{n}(z;\mu)}{z-w_{1,n}}\right|^q|z-w_{1,n}|^qd\mu^0_1}{\|P_n(\mu)\|^q_{\mu}}
+\alpha_1|p_n(z_1)|^q\\
&\geq\frac{\|P_{n-1}(|z-w_{1,n}|^q\mu^0_1)\|^q_{|z-w_{1,n}|^q\mu^0_1}}{\|P_n(\mu)\|^q_{\mu}}
+\alpha_1|p_n(z_1)|^q\\
&=\frac{c_{qn}(\tau)\exp\left(\int_0^{2\pi}\log\left(h(\eitheta)\nu'(\theta)|\psi(\eitheta)-w_{1,n}|^q\right)\dtpi
\right)\prod_{j=2}^m|\phi(z_j)|^q}
{c_{qn}(\tau)\exp\left(\int_0^{2\pi}\log\left(h(\eitheta)\nu'(\theta)\right)\dtpi\right)
\prod_{j=1}^m|\phi(z_j)|^q}+\\
&\qquad+\alpha_1|p_n(z_1)|^q+o(1)\\
&=1+o(1)+\alpha_1|p_n(z_1)|^q,
\end{align*}
which implies the desired conclusion for $z_1$.  An identical proof works for each $z_j$ for $j=2,3,\ldots,m$.
\end{proof}

\noindent\textit{Remark.}  As a consequence of Corollary \ref{singzero}, we see that if $K\subseteq G$ is compact, then
\[
\int_K|p_n(z;\mu,q)|^qd\mu(z)\rightarrow0
\]
as $\nri$.

\vspace{2mm}

An additional consequence of Theorem \ref{bigone} is the following corollary, which is a refinement of Lemma \ref{closetozone}.

\begin{corollary}\label{closertozone}
Let $\mu$ be as in Theorem \ref{bigone} with $\nu$ a Szeg\H{o} measure on $\partial\bbD$.  There exists a $\delta>0$ and $N\in\bbN$ such that for all $n\geq N$, the polynomial $P_n(\mu)$ has a single zero in $\{u:|u-z_i|<\delta\}$ for each $i\leq m$.  If we denote this zero by $w_{i,n}$, then there is an $a>0$ so that $|w_{i,n}-z_i|\leq e^{-an}$ for all large $n$.
\end{corollary}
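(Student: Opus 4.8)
The plan is to prove the two assertions separately; the second rests on a potential‑theoretic estimate which is the real work.

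\medskip
\noindent\emph{Exactly one zero near each $z_i$.} Choose $\delta>0$ so small that the closed balls $\{|u-z_i|\le\delta\}$, $i=1,\dots,m$, are pairwise disjoint and disjoint from $\barg$. By Lemma~\ref{closetozone} we may pick, for each $i$, a zero $w_{i,n}$ of $P_n(\mu)$ nearest $z_i$; then $w_{i,n}\to z_i$. It suffices to rule out that, along an infinite set $\mcn\subseteq\bbN$, some $P_n(\mu)$ ($n\in\mcn$) has a \emph{second} zero $v_n$ (counted with multiplicity) in one of the balls, say $\{|u-z_1|<\delta\}$ after relabelling; passing to a subsequence $v_n\to v_\infty\notin\barg$, so $|\phi(v_\infty)|>1$. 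Write $P_n(\mu)=(z-v_n)\,y_n(z)\,R_n(z)$ with $y_n(z)=\prod_{j=1}^m(z-w_{j,n})$ and $R_n$ monic of degree $n-m-1$, and run the lower‑bound argument from the proof of Theorem~\ref{bigone} carrying the extra factor $|\psi(re^{i\theta})-v_n|^q$ inside the Szeg\H{o} function: restrict $\mu$ to $\psi_*(h(\nu_{ac}\otimes\tau))$, absorb $|\psi(re^{i\theta})-v_n|^q\prod_j|\psi(re^{i\theta})-w_{j,n}|^q\,h(re^{i\theta})\nu'(\theta)$ into $|S_{r,n}(e^{i\theta})|^q$ for a suitable $S_{r,n}$ analytic on $|w|>1$, factor out $e^{-i(n-m-1)\theta}$, and apply (\ref{subharm}) for each fixed $r$. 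This gives $\|P_n(\mu)\|_{\mu}^q\ge\int_\rho^1 r^{q(n-m-1)}|S_{r,n}(\infty)|^q\,d\tau(r)$, and letting $n\to\infty$ through $\mcn$ — using $v_n\to v_\infty$, $w_{j,n}\to z_j$, Proposition~\ref{psiint}, and Lemma~\ref{onemoment} to replace $c_{q(n-m-1)}(\tau)$ by $c_{qn}(\tau)$ — yields
\[
\liminf_{n\in\mcn}\frac{\|P_n(\mu)\|_{\mu}^q}{c_{qn}(\tau)}\ \ge\ \exp\!\Big(\int_0^{2\pi}\log\big(h(e^{i\theta})\nu'(\theta)\big)\dtpi\Big)\,|\phi(v_\infty)|^q\prod_{j=1}^m|\phi(z_j)|^q,
\]
which, since $|\phi(v_\infty)|>1$, strictly exceeds the right side of (\ref{bigconcl}) and contradicts Theorem~\ref{bigone}. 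Hence for this $\delta$ and $n$ large each ball contains exactly one zero of $P_n(\mu)$, namely $w_{i,n}$.

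\medskip
\noindent\emph{Reduction of the rate to a potential estimate.} Fix $i$ and write $P_n(\mu)=(z-w_{i,n})\tilde Q_{n-1}(z)$ with $\tilde Q_{n-1}$ monic of degree $n-1$; by the first part $\tilde Q_{n-1}$ has no zero in $\{|u-z_i|<\delta\}$, so $\tilde Q_{n-1}(z_i)\ne0$. The key fact is
\[
\tfrac1n\log|\tilde Q_{n-1}(z_i)|\ \longrightarrow\ g_{\barc\setminus\barg}(z_i;\infty)=\log|\phi(z_i)|>0.\tag{$\star$}
\]
Granting $(\star)$, compare $P_n(\mu)$ with the monic polynomial $(z-z_i)\tilde Q_{n-1}$. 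Extremality gives $\|P_n(\mu)\|_{\mu}^q\le\|(z-z_i)\tilde Q_{n-1}\|_{\mu}^q$; isolating the atom at $z_i$, using $\big||w-z_i|^q-|w-w_{i,n}|^q\big|\le c\,|z_i-w_{i,n}|$ for $w\in\supp\mu\setminus\{z_i\}$ (where $|w-z_i|,|w-w_{i,n}|\ge c_0>0$) and $\int_{\supp\mu\setminus\{z_i\}}|\tilde Q_{n-1}|^q\,d\mu\le c_0^{-q}\|P_n(\mu)\|_{\mu}^q$, one gets (assuming $w_{i,n}\ne z_i$, else there is nothing to prove)
\[
|z_i-w_{i,n}|^{q-1}\,|\tilde Q_{n-1}(z_i)|^q\ \le\ C\,\|P_n(\mu)\|_{\mu}^q .
\]
By Theorem~\ref{bigone}, $\|P_n(\mu)\|_{\mu}^q=O(c_{qn}(\tau))=O(1)$, so $\tfrac1n\log\|P_n(\mu)\|_{\mu}\le o(1)$; combined with $(\star)$: for $q>1$ this gives $|z_i-w_{i,n}|\le\big(C\|P_n(\mu)\|_{\mu}^q/|\tilde Q_{n-1}(z_i)|^q\big)^{1/(q-1)}\le e^{-an}$ for large $n$ and any $a<\tfrac{q}{q-1}\log|\phi(z_i)|$; for $q\le1$ it forces $|\tilde Q_{n-1}(z_i)|$ to be bounded (when $q=1$) or subexponential (when $q<1$), contradicting $(\star)$ unless $w_{i,n}=z_i$ for all large $n$, in which case the claim is trivial.

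\medskip
\noindent\emph{Proof of $(\star)$, and the main obstacle.} The inequality $\limsup_n\tfrac1n\log|\tilde Q_{n-1}(z_i)|\le\log|\phi(z_i)|$ is Bernstein--Walsh, $|\tilde Q_{n-1}(z_i)|\le\|\tilde Q_{n-1}\|_{L^\infty(\barg)}|\phi(z_i)|^{n-1}$, together with $\|\tilde Q_{n-1}\|_{L^\infty(\barg)}^{1/n}\to1$: indeed $\|\tilde Q_{n-1}\|_{L^\infty(\barg)}=\|\tilde Q_{n-1}\|_{L^\infty(\partial G)}\le c_0^{-1}\|P_n(\mu)\|_{L^\infty(\partial G)}\le c_0^{-1}\|P_n(\mu)\|_{L^\infty(\supp\mu)}$ (since $\partial G\subseteq\supp\mu$, as $\nu'>0$ a.e.\ and $1\in\supp\tau$), and the last quantity is $(1+o(1))^n\|P_n(\mu)\|_{L^q(\mu)}$ by the regularity of $\mu$ (Proposition~\ref{nuregreg}, cf.\ \cite{StaTo}), while $\|P_n(\mu)\|_{L^q(\mu)}^{1/n}\to\cpct(\barg)=1$ by Theorem~\ref{bigone} and Lemma~\ref{onemoment}. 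The opposite inequality is the heart: the zeros of $\tilde Q_{n-1}$ lie in $\overline{\mathrm{conv}}(\supp\mu)$ and, by the first part, none lie in $\{|u-z_i|<\delta\}$, so the normalized zero‑counting measures $\nu_n$ of $\tilde Q_{n-1}$ are supported on a fixed compact set avoiding $z_i$; any weak‑$*$ limit $\nu$ satisfies $U^{\nu}\ge U^{\omega_{\barg}}$ everywhere (Bernstein--Walsh as above, the lower‑envelope theorem, and the principle of domination, since $\omega_{\barg}$ has finite energy), and since $U^{\nu}-U^{\omega_{\barg}}$ is superharmonic on the connected open set $\barc\setminus\barg$, nonnegative there, and vanishes at the interior point $\infty$, the minimum principle forces $U^{\nu}=U^{\omega_{\barg}}$ on $\barc\setminus\barg$; as $w\mapsto\log|z_i-w|$ is continuous near all the supports, $U^{\nu_n}(z_i)\to U^{\nu}(z_i)=U^{\omega_{\barg}}(z_i)=-\log|\phi(z_i)|$ along any convergent subsequence, whence $(\star)$. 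The step I expect to be the main obstacle is precisely this lower half of $(\star)$ — showing that the limiting zero distribution of $\tilde Q_{n-1}$ does not leak mass toward $z_i$ (equivalently, that its balayage onto $\barg$ is $\omega_{\barg}$), so that $U^{\nu}(z_i)=U^{\omega_{\barg}}(z_i)$; the comparison bookkeeping in the reduction step is routine.
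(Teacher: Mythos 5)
Your proof is correct. For the ``exactly one zero'' claim it runs on the same engine as the paper's: both insert the offending exterior zeros into the Szeg\H{o}-function weight, apply the subharmonicity inequality (\ref{subharm}) together with Proposition \ref{psiint} and Lemma \ref{onemoment}, and contradict the limit (\ref{bigconcl}). The difference is bookkeeping: the paper treats all $t(n)$ zeros outside the level curve $\Gamma_{1+\epsilon}$ at once, first using regularity to show $n-t(n)\to\infty$ and then forcing $t(n)=m$, which yields the stronger statement that \emph{no} zeros other than the $w_{j,n}$ lie outside $\Gamma_{1+\epsilon}$ eventually (a fact invoked later, e.g.\ for the non-vanishing of $\tilde f$ in the proof of Theorem \ref{szegconv}); your local contradiction rules out only a second zero in each $\delta$-ball, which suffices for the corollary as stated but not for those later uses. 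For the exponential rate the paper simply defers to the last portion of the proof of Theorem 8.1.11 in \cite{OPUC1}, whose mechanism is the same as yours: $|P_n(z_i)|$ stays bounded because of the mass at $z_i$, while the cofactor $\tilde Q_{n-1}(z_i)$ grows like $|\phi(z_i)|^{n(1-o(1))}$ by equidistribution of its zeros; so your potential-theoretic proof of $(\star)$ (Bernstein--Walsh plus the lower envelope theorem, domination, and the minimum principle) is a legitimate self-contained substitute for that citation. Your variational comparison with $(z-z_i)\tilde Q_{n-1}$ is more than is needed --- the trivial bound $\alpha_i|P_n(z_i)|^q\le\|P_n\|^q_{L^q(\mu)}=O(1)$ already gives $|w_{i,n}-z_i|\le e^{-an}$ for every $a<\log|\phi(z_i)|$ and every $q>0$ --- but it buys the sharper exponent $\tfrac{q}{q-1}\log|\phi(z_i)|$ for $q>1$ (matching the known $|z_i|^{-2n}$ rate when $q=2$ and $G=\bbD$) and the conclusion $w_{i,n}=z_i$ for $q\le1$, neither of which the paper records.
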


\begin{proof}
Lemma \ref{closetozone} establishes the existence of at least one zero of $P_n(\mu)$ in $\{u:|u-z_i|<\delta\}$ for all $i$ and all large $n$.  Now, fix $\epsilon>0$ (but small) and let $\{w_1,\ldots,w_{t(n)}\}$ denote the collection of zeros of $P_n(\mu)$ outside $\Gamma_{1+\epsilon}$.  

Define for $|z|>1$ the functions
\[
S_{r,n}(z)=\exp\left(-\frac{1}{2q\pi}\int_0^{2\pi}\log\left(\prod_{j=1}^{t(n)}|\psi(r\eitheta)-w_{j}|^q
h(r\eitheta)\nu'(\theta)\right)\frac{\eitheta+z}{\eitheta-z}d\theta\right).
\]
As in the proof of Theorem \ref{bigone}, we calculate
\begin{align}
\nonumber\frac{\|P_n(z;\mu)\|^q_{\mu}}{c_{qn}(\tau)}&\geq
\frac{\int_{\rho}^1r^{qn-qt(n)}|S_{r,n}(\infty)|^q d\tau(r)}{c_{qn}(\tau)}\geq
\frac{\int_{\rho}^1r^{qn-qt(n)}|S_{r,n}(\infty)|^q d\tau(r)}{c_{qn-qt(n)}(\tau)}\\
\label{attract}&=\frac{\int_{\rho}^1r^{qn-qt(n)}\exp\left(\frac{1}{2\pi}\int_0^{2\pi}\log\left(
h(r\eitheta)\nu'(\theta)\right)d\theta\right) d\tau(r)}{c_{qn-qt(n)}(\tau)}\prod_{j=1}^{t(n)}|\phi(w_j)|^q,
\end{align}
where we used Proposition \ref{psiint}.  From this expression, it follows that $n-t(n)$ tends to infinity as $\nri$, for if it did not, then since $|\phi(w_j)|>1+\epsilon$ for every $j\leq t(n)$, we would have $\|P_n(z;\mu)\|^{1/n}_{\mu}>1+\epsilon$ for all $n$ in some subsequence $\mcn\subseteq\bbN$, which violates the fact that $\cpct(\barg)=1$ and $\mu$ is regular (see Theorem III.3.1 in \cite{SaffTot}).

Since $n-t(n)\rightarrow\infty$, the first factor in (\ref{attract}) converges to $\exp\left(\frac{1}{2\pi}\int_0^{2\pi}\log\left(h(\eitheta)\nu'(\theta)\right)d\theta\right)$ as $\nri$ while the left hand side has limit given by the right hand side of (\ref{bigconcl}).  If for each $i\in\{1,\ldots,m\}$ we pick a sequence $\{w_{i,n}\}_{n\in\bbN}$ as in the proof of Theorem \ref{bigone} then the corresponding factor in the product (\ref{attract}) converges to $|\phi(z_i)|^q$ as $\nri$.  Therefore, it must be that
\[
\limsup_{\nri}\prod_{j=1, w_j\neq w_{i,n}}^{t(n)}|\phi(w_j)|^q\leq1.
\]
However, each factor in this product is larger than $(1+\epsilon)^q$.  We conclude that $t(n)=m$ for all sufficiently large $n$.  This implies $P_n(\mu)$ has a single zero near each $z_j$ for $j=1,\ldots,m$ when $n$ is sufficiently large.

The proof of the exponential attraction now proceeds exactly as in the last portion of the proof of Theorem 8.1.11 in \cite{OPUC1}.
\end{proof}

\noindent\textit{Remark.}   Corollary \ref{closertozone} tells us that the polynomial $P_n(\mu,q)$ has a single zero extremely close to $z_i$ for each $i\in\{1,\ldots,m\}$ and the remaining $n-m$ zeros are placed so as to minimize the $L^q(\mu)$ norm with respect to a varying weight.  It would be interesting to look at the measure $\mu$ on $\bbD\cup\{z_1,\ldots,z_m\}$ given by $d\mu=d^2z+\sum_{j=1}^m\delta_{z_j}$ (where $d^2z$ refers to area measure on the unit disk) and see if the results from \cite{MDPoly} continue to hold in this case, where the polynomial weight would be $y_{\infty}(z)$ (see (\ref{ydef}) above).

\vspace{2mm}

The upper bound in the proof of Theorem \ref{bigone} came from Lemma \ref{weakboundary}, which applies to arbitrary finite measures (not just product measures).  We can also state the lower bound used in the proof of Theorem \ref{bigone} in a more general form.

\begin{proposition}\label{genlow}
Let $\mutil$ be a measure on $\barg$ so that $\mutil\geq\mu$ and $\mu$ is the push-forward (via $\psi$) of the measure $w(r\eitheta)\dtpi d\tau(r)$ where $1\in\supp(\tau)$ and $w\in L^1(\dtpi\otimes d\tau(r))$.  Then
\[
\|P_n(\mutil)\|^q_{\mutil}\geq\int_0^1r^{nq}\exp\left(\int_0^{2\pi}\log(w(r\eitheta))\dtpi\right)d\tau(r).
\]
\end{proposition}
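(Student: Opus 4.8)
\textbf{Proof proposal for Proposition \ref{genlow}.}

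The plan is to reuse, essentially verbatim, the lower-bound mechanism from the proof of Theorem \ref{bigone}, but now with the weight $w$ in place of $h(r\eitheta)\nu'(\theta)$ and with no pure points to account for. First I would observe that since $\mutil\geq\mu$, we have $\|P_n(\mutil)\|^q_{\mutil}\geq\|P_n(\mutil)\|^q_{\mu}$, and since $P_n(\mutil)$ is in particular a monic polynomial of degree $n$, the extremal property of $P_n(\mu)$ gives $\|P_n(\mutil)\|^q_{\mu}\geq\|P_n(\mu)\|^q_{\mu}$; actually for the argument below it is cleaner to just bound $\|P_n(\mutil)\|^q_{\mutil}\geq\|P_n(\mutil)\|^q_{\mu}$ directly and then work with the polynomial $P_n(\mutil)$ itself, since the subharmonicity argument only needs \emph{some} monic polynomial of degree $n$. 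Writing $Q_n:=P_n(\mutil)$ (monic of degree $n$), we have
\[
\|P_n(\mutil)\|^q_{\mutil}\geq\int_{\rho}^1\int_0^{2\pi}|Q_n(\psi(r\eitheta))|^q\,w(r\eitheta)\,\dtpi\,d\tau(r).
\]

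Next I would, for each fixed $r\in[\rho,1]$, introduce the Szeg\H{o}-type function
\[
S_{r}(z)=\exp\left(-\frac{1}{2q\pi}\int_0^{2\pi}\log(w(r\eitheta))\frac{\eitheta+z}{\eitheta-z}d\theta\right),\qquad|z|>1,
\]
so that $|S_r(\eitheta)|^q=w(r\eitheta)$ for a.e.\ $\theta$ (here one needs $\int_0^{2\pi}\log(w(r\eitheta))\,d\theta>-\infty$ for $\tau$-a.e.\ $r$; I would address integrability in the first step, see the obstacle paragraph). Then for each such $r$ the function $z\mapsto Q_n(\psi(z))S_r(z)z^{-n}$ lies in $H^q(\barc\setminus\bard)$ (it is analytic at $\infty$ because $Q_n\circ\psi$ grows like $z^n$ there and $S_r$ is bounded and non-vanishing at $\infty$), and its value at $\infty$ is $S_r(\infty)=\exp\left(\frac{1}{2\pi}\int_0^{2\pi}\log(w(r\eitheta))\,d\theta\right)$ (using $\psi'(\infty)=1$ and $Q_n$ monic). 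Applying the subharmonicity inequality (\ref{subharm}) for each fixed $r$ and then integrating in $d\tau(r)$ gives
\[
\|P_n(\mutil)\|^q_{\mutil}\geq\int_{\rho}^1r^{qn}|S_r(\infty)|^q\,d\tau(r)=\int_{\rho}^1r^{qn}\exp\left(\int_0^{2\pi}\log(w(r\eitheta))\dtpi\right)d\tau(r),
\]
and since $\tau$ is carried by $[\rho,1]$ this equals $\int_0^1 r^{qn}\exp(\cdots)\,d\tau(r)$, as claimed.

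The main obstacle is making the Szeg\H{o} function $S_r$ well defined, i.e.\ controlling the $r$-sections of $w$. Since $w\in L^1(\dtpi\otimes d\tau)$, Fubini gives that $\theta\mapsto w(r\eitheta)$ is in $L^1(\dtpi)$ for $\tau$-a.e.\ $r$, but I still need $\log(w(r\eitheta))$ to be integrable (not $+\infty$ from an essential supremum issue, nor $-\infty$ from zeros of $w$). The standard fix: if $\int_0^{2\pi}\log(w(r\eitheta))\,d\theta=-\infty$ for a given $r$, then the right-hand side integrand is $0$ there (interpreting $\exp(-\infty)=0$) so that $r$ contributes nothing to the bound and can be discarded; and on the set of $r$ where $w(r\cdot)\notin L^1$ or is a.e.\ zero we likewise lose nothing. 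On the complementary set of $r$ (of positive $\tau$-measure, or else the claimed bound is trivially $0$) the function $S_r$ is a genuine element of $H^q$ of the exterior disk by the same argument referenced for (\ref{szegfundef}), and the per-$r$ inequality (\ref{subharm}) applies. A measurability remark is also needed so that $r\mapsto|S_r(\infty)|^q$ is $\tau$-measurable and the final integral makes sense; this follows since it equals $\exp\left(\int_0^{2\pi}\log(w(r\eitheta))\dtpi\right)$, a measurable function of $r$ by Fubini. Everything else is a direct transcription of the Theorem \ref{bigone} lower-bound computation with $m=0$.
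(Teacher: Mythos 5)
Your proposal is correct and follows the paper's proof essentially verbatim: reduce to the push-forward measure $\mu$, define the Szeg\H{o} function $S_r$ for the $r$-sections of $w$ where this makes sense (the paper restricts to the set $X$ of $r$ for which $w(r\eitheta)\dtpi$ is a Szeg\H{o} measure, which is exactly your fix), and apply (\ref{subharm}) fiberwise before integrating in $d\tau(r)$. One small transcription slip: the auxiliary function should be $z\mapsto Q_n(\psi(rz))S_r(z)z^{-n}$, whose boundary values on $|z|=1$ match the integrand $|Q_n(\psi(r\eitheta))|^qw(r\eitheta)$ and whose value at $\infty$ is $r^nS_r(\infty)$ --- that is where the factor $r^{qn}$ in your final display comes from; as written, $Q_n(\psi(z))S_r(z)z^{-n}$ has the wrong boundary values and value $S_r(\infty)$ at $\infty$, so it would not produce that factor.
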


\noindent\textit{Remark.}  The statement here is very general because we do not insist on any continuity of $w$.

\begin{proof}
By the inequality of the measures and the extremal property, we have
\[
\|P_n(\mutil)\|^q_{\mutil}\geq\|P_n(\mutil)\|^q_{\mu}\geq\|P_n(\mu)\|^q_{\mu},
\]
so it suffices to put the desired bound on $\|P_n(\mu)\|^q_{\mu}$.  Let $X\subseteq[0,1]$ be the collection of all $r$ so that $w(r\eitheta)\dtpi$ is a Szeg\H{o} measure on $\partial\bbD$.  The proposition is trivial unless $\tau(X)>0$.  Therefore, we assume this is the case, and for $r\in X$ we define
\[
S_r(z)=
\exp\left(-\frac{1}{2q\pi}\int_0^{2\pi}\log\left(w(r\eitheta)\right)\frac{\eitheta+z}{\eitheta-z}d\theta\right)
\quad,\quad|z|>1
\]
and write
\[
\|P_n(\mu)\|^q_{\mu}\geq\int_Xr^{nq}|S_r(\infty)|^qd\tau(r)
\]
as in (\ref{lowermoment}).  This is the desired lower bound.
\end{proof}

We conclude this section with an example showing how one can apply Lemma \ref{weakboundary} to a region without analytic boundary.

\vspace{2mm}

\noindent\textbf{Example.}  Let $G$ be the region $\left\{z:|z^3-1|<1\right\}$ and assume $q>1$.  Notice that $G$ has capacity $1$ since $\phi(z)^3=z^3-1$ (see the example in Section 3 of \cite{MDFaber}).  Define the polynomials $Q_n$ for $n$ a multiple of $3$ by $Q_{3m}(z)=(z^3-1)^m$.

\begin{figure}[h!]\label{GDroq}
\begin{center}
\includegraphics[scale=.77]{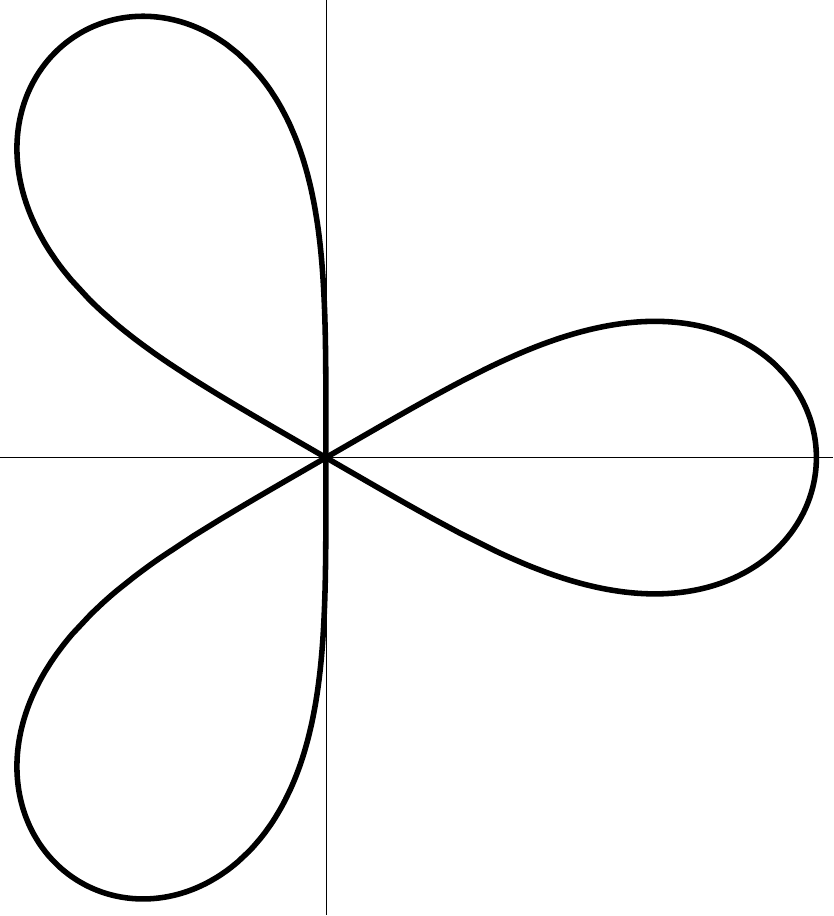}
\caption{The region $G$ of the example.}
\end{center}
\end{figure}

Let $\tau$ be a probability measure on $(0,1)$ with $1\in\supp(\tau)$.  The region $G$ can be decomposed into level sets $\Xi_r$ where
\[
\Xi_r=\left\{z:|z^3-1|=r\right\}
\]
and $r$ runs from $0$ to $1$.  Let $\nu_r$ be arc-length measure on each component of $\Xi_r$ and let $h(z)$ be a function that is continuous on $\barg$ and is invariant under rotations by $\tfrac{2\pi}{3}$ so that $\phi_*(h\nu_1)$ has $\bbZ_3$ symmetry as a measure on $\partial\bbD$ as in Example 1.6.14 in \cite{OPUC1}.  Let us define $\mu$ by
\[
\int_{\barg}f(z)d\mu(z)=\int_0^1\int_{\Xi_r}f(z)h(z)d\nu_r(z)d\tau(r).
\]
Consider the measure $h\nu_1$ on $\partial G$.  If $m\in\bbN$ is fixed, then by the extremal property we have that for any choice of complex numbers $a_0,\ldots,a_{m-1}$ and $a_m=1$
\[
\|P_{3n}(h\nu_1,q)\|_{h\nu_1}^q\leq\left\|\sum_{j=0}^ma_jQ_{3(j+n-m)}(z)\right\|_{h\nu_1}^q=\left\|\sum_{j=0}^ma_j\phi(z)^{3(j+n-m)}\right\|_{h\nu_1}^q.
\]
Following the proof of the upper bound in Theorem 7.1 in \cite{Geronimus} we get
\begin{align}\label{siv}
\|P_{3n}(h\nu_1,q)\|_{h\nu_1}^q\leq
\int_0^{2\pi}\left|1+\sum_{k=1}^m\gamma_ke^{3ki\theta}\right|^qd\phi_*(h\nu_1)
\end{align}
for any $m\leq n$ and any choice of constants $\gamma_1,\ldots,\gamma_m$.  The assumed $\bbZ_3$ symmetry of the measure implies that $P_{3m}(z;\phi_*(h\nu_1),q)=R_m(z^3)$ for some monic polynomial $R_m$ of degree $m$ (this follows from the uniqueness of the extremal polynomial in the case $q>1$; see Example 1.6.14 in \cite{OPUC1}).  Therefore, we can choose $\gamma_1,\ldots,\gamma_m$ appropriately so that the right hand side of (\ref{siv}) is equal to $\|P_{3m}(\phi_*(h\nu_1),q)\|_{\phi_*(h\nu_1)}^q$.  The reasoning of Lemma \ref{weakboundary} then implies
\[
\limsup_{\nri}\|P_{3n}(h\nu_1,q)\|_{h\nu_1}^q\leq\exp\left(\int_0^{2\pi}\log\left(\phi_*(h\nu_1)'(\theta)\right)\dtpi\right).
\]

Now, as in Lemma \ref{weakboundary}, we calculate (for $f\in C(\barg)$)
\begin{align*}
c_{qm}(\tau)^{-1}\int_{\barg}f(z)|Q_{3m}(z)|^qd\mu(z)&=
c_{qm}(\tau)^{-1}\int_{0}^1\left(\int_{\Xi_r}f(z)h(z)d\nu_r(z)\right)r^{qm}d\tau(r)\\
&\rightarrow \int_{\Xi_1}f(z)h(z)d\nu_1(z)
\end{align*}
as $m\rightarrow\infty$.  Therefore, the measures $\frac{|Q_{3m}|^q}{c_{qm}(\tau)}d\mu$ converge weakly to $hd\nu_1$ and the reasoning of Lemma \ref{weakboundary} implies
\[
\limsup_{\nri}\frac{\|P_{3n}(z;\mu,q)\|^q_{L^q(\mu)}}{c_{qn}(\tau)}\leq
\exp\left(\int_0^{2\pi}\log(\phi_*(h\nu_1)'(\theta))\dtpi\right).
\]
\begin{flushright}
$\Box$
\end{flushright}

\vspace{2mm}

In the next section, we explore more detailed asymptotic properties of the polynomials $P_n(z;\mu,q)$ and $p_n(z;\mu,q)$.

\section{Strong Asymptotics for Extremal Polynomials}\label{strong}

The main idea of Theorem \ref{bigone} is that the asymptotic behavior of the extremal polynomial norms is comparable to the behavior of the $L^q$ norms $\{\|\phi(z)^n\|_{L^q(\mu)}\}_{n\in\bbN}$.  It should not be surprising then that in some cases we can make a stronger statement about the extremal polynomials' resemblance to $\phi(z)^n$ in certain regions of the plane, and this is what we call \textit{strong asymptotics}.  Theorems \ref{szegconv} and \ref{boundarymeasure} will provide us with detailed information about the behavior of $P_n(z;\mu,q)$ outside of $\barg$ and near the boundary of $G$.  In Section \ref{christo} we will see how $P_n(z;\mu,2)$ behaves inside $G$ (see Corollary \ref{summable}).

In the previous section we established that the polynomial $P_n(\mu,q)$ has a single zero near each pure point of $\mu$ outside of $\barg$ (for large $n$) and asymptotically, all other zeros tend to $\barg$.  If we label the zero of $P_n(\mu,q)$ near $z_j$ as $w_{j,n,q}$, let us recall the definition
\begin{align*}
y_n(z;q)=\prod_{j=1}^m(z-w_{j,n,q}),
\end{align*}
which can be uniquely defined for all sufficiently large $n$ by Corollary \ref{closertozone}.  It will be convenient for us to define
\begin{align}\label{lamden}
\Lambda_n(z;\mu,q)=\frac{P_n(z;\mu,q)}{y_n(z;q)}
\end{align}
for all sufficiently large $n$.  We also recall the definition
\begin{align}\label{essen}
S_{r,n}(z;q)=\exp
\left(-\frac{1}{2q\pi}\int_0^{2\pi}\log\left(h(r\eitheta)\nu'(\theta)|y_n(\psi(r\eitheta);q)|^q\right)
\frac{\eitheta+z}{\eitheta-z}d\theta\right)
\end{align}
for $r\in[\rho,1]$ and $z\in\overline{\bbC}\setminus\bard$.  We begin by considering the behavior of $P_n(z;\mu,q)$ when $z\not\in\barg$ and $q>0$.  We will prove a result reminiscent of the convergence result in Theorem 2.4.1(iv) in \cite{OPUC1} and the corollary in \cite{Koosis}.

\begin{theorem}\label{szegconv}
Let $S_{r,n}(z;q)$ be defined as in (\ref{essen}).  If $\mu$ is a measure as in Theorem \ref{bigone} with $\nu$ a Szeg\H{o} measure on $\partial\bbD$ and $q>0$ then
\[
\frac{\Lambda_n(\psi(z);\mu,q)S_{1,\infty}(z;q)}{z^{n-m}S_{1,\infty}(\infty;q)}\rightarrow1
\]
as $\nri$ uniformly on compact subsets of $\overline{\bbC}\setminus\bard$.
\end{theorem}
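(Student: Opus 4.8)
The plan is to establish the convergence in two halves — an upper bound on the $L^q$ norm of a suitable ratio that forces $\Lambda_n(\psi(z))/z^{n-m}$ to be close to $1/S_{1,\infty}$, and then a normal-families / maximum-principle argument to upgrade $L^q$-control on the circle to uniform convergence on compacta. First I would recall that by Theorem \ref{bigone} (combined with Corollary \ref{singzero} and Corollary \ref{closertozone}, which pin down the zeros $w_{j,n}$ and hence $y_n$), the quantity $c_{qn}(\tau)^{-1}\|P_n(\mu,q)\|^q_\mu$ converges to $\exp(\int\log(h\nu')\,\dtpi)\prod|\phi(z_j)|^q$, and moreover $y_n\to y_\infty$ with exponential speed so $S_{r,n}(\cdot;q)\to S_{r,\infty}(\cdot;q)$ uniformly on the relevant sets. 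The key object is $Q_n(z):=\Lambda_n(\psi(z))S_{1,\infty}(z;q)z^{-(n-m)}$, which is analytic on $\barc\setminus\bard$ (the zeros of $P_n$ near the $z_j$ have been divided out, and $S_{1,\infty}$ is analytic and non-vanishing there since $\nu$ is Szeg\H{o}); I want to show $Q_n\to Q_n(\infty)=S_{1,\infty}(\infty;q)\,c$ for the appropriate constant, and that the normalization works out to $1$.

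The central step is to run the lower-bound computation from the proof of Theorem \ref{bigone} more carefully and in reverse. As in that proof, for each $r\in[\rho,1]$ one writes
\[
\|P_n(\mu)\|^q_\mu\geq\int_\rho^1\int_0^{2\pi}\left|\frac{P_n(\psi(r\eitheta))}{e^{i(n-m)\theta}y_n(\psi(r\eitheta))S_{r,n}(\eitheta)^{-1}}\right|^q\dtpi\,d\tau(r)
\]
and then uses subharmonicity (inequality (\ref{subharm})) to bound the inner integral below by $|S_{r,n}(\infty)|^q r^{qn-qm}$, which after dividing by $c_{qn}(\tau)$ and letting $\nri$ reproduces the right side of (\ref{bigconcl}). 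Since Theorem \ref{bigone} says this is actually an \emph{equality} in the limit, the subharmonicity inequality must be asymptotically saturated: writing $f_{r,n}(z)=\Lambda_n(\psi(rz))S_{r,n}(z)z^{-(n-m)}/S_{r,n}(\infty)$, which lies in $H^q(\barc\setminus\bard)$ and has $f_{r,n}(\infty)=1$, the gap in (\ref{subharm}) is $\int_0^{2\pi}|f_{r,n}(\eitheta)|^q\dtpi - 1$, and integrating against $d\tau(r)$ shows $\int_\rho^1\big(\int_0^{2\pi}|f_{r,n}(\eitheta)|^q\dtpi-1\big)r^{qn-qm}|S_{r,n}(\infty)|^q\,d\tau(r)=o(c_{qn}(\tau))$. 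Because $1\in\supp(\tau)$ and the weights $r^{qn-qm}$ concentrate near $r=1$ (as in Lemma \ref{onemoment}), this forces $\int_0^{2\pi}|f_{1,n}(\eitheta)|^q\dtpi\to1$ along the mass of $\tau$ near $1$; combined with $f_{r,n}(\infty)=1$ and the fact that equality in (\ref{subharm}) for an $H^q$ function forces the function to be identically constant, one deduces that $f_{1,n}\to1$ in the relevant $H^q$ sense. Replacing $S_{r,n}$ by its limit $S_{1,\infty}$ (valid by the exponential convergence of $y_n$ and continuity of the Szeg\H{o}-function construction in $r$, using condition (2) of the Remark after Theorem \ref{bigone}) then gives $\Lambda_n(\psi(z))S_{1,\infty}(z)z^{-(n-m)}/S_{1,\infty}(\infty)\to1$ in $H^q$.

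To pass from $H^q$ (or $L^q$-on-the-circle) convergence to uniform convergence on compact subsets of $\barc\setminus\bard$, I would argue that the family $\{Q_n\}$ is normal: each $Q_n$ is analytic on $\barc\setminus\bard$, and a standard bound (using regularity of $\mu$ from Proposition \ref{nuregreg}, together with $\cpct(\barg)=1$ and fact (\ref{faberdecay}) about Faber polynomials, or simply the subharmonicity estimate applied on circles $|z|=r>1$) shows $\limsup_n\|Q_n\|^{1/n}_{L^\infty(|z|=r)}\leq1$ for each $r>1$, so the $Q_n$ are locally uniformly bounded. Any locally uniform limit $Q$ of a subsequence is analytic with $Q(\infty)=1$ (from $f_{r,n}(\infty)=1$) and has $|Q(\eitheta)|=1$ a.e. by the $H^q$-convergence on the circle together with Fatou/subharmonic lower-semicontinuity; an $H^q$ function that is $1$ at $\infty$ and has modulus $1$ a.e.\ on the boundary must be identically $1$ by the outer-function/maximum-principle argument (this is the uniqueness clause behind (\ref{subharm})). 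Hence every subsequential limit is $1$, and normality gives $Q_n\to1$ uniformly on compacta, which is exactly the claim. The main obstacle I anticipate is the first part — turning the \emph{asymptotic} saturation of the subharmonicity inequality into genuine convergence of $f_{1,n}$, since the inequality is integrated against $\tau$ and one must carefully exploit $1\in\supp(\tau)$ and the weak convergence of $r^{qn}d\tau(r)/c_{qn}(\tau)$ to $\delta_1$ (Lemma \ref{onemoment}) to localize the information at $r=1$ without losing it to the bulk of $\tau$; handling the case $0<q<1$, where one only has the weaker inequality (\ref{subharm}) rather than a Poisson representation, is the technically delicate point.
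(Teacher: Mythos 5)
Your overall strategy is the paper's: exploit the asymptotic saturation of the subharmonicity inequality (\ref{subharm}) forced by Theorem \ref{bigone}, then use a normal-families argument plus a uniqueness statement for $H^q$ functions with $f(\infty)=1$ and unit norm to identify every subsequential limit with $1$. The ingredients you list (Corollary \ref{closertozone} for the zeros, non-vanishing of the limit via Hurwitz so that $f^q$ is analytic when $0<q<1$, Lemma \ref{onemoment} for the concentration of $r^{qn}d\tau$) are all the right ones.

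However, there is a genuine gap at precisely the step you flag as ``the main obstacle,'' and flagging it does not close it. The saturation of (\ref{subharm}) is only obtained after integrating in $r$ against $\tau$, so it controls $\int_0^{2\pi}|f_{r,n}(\eitheta)|^q\,\dtpi$ only for $r$ carrying $\tau$-mass near $1$; it says nothing directly about $f_{1,n}$, because $\tau(\{1\})$ may well be zero (the paper's own example $\tau=\tfrac{6}{\pi^2}\sum_j j^{-2}\delta_{1-2^{-j}}$ is of exactly this type). Your assertion that this ``forces $\int_0^{2\pi}|f_{1,n}(\eitheta)|^q\,\dtpi\to1$'' therefore does not follow, and the subsequent claim that any subsequential limit $Q$ of $Q_n$ has $|Q(\eitheta)|=1$ a.e.\ rests on an $L^q$ convergence on the unit circle that was never established. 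The paper resolves this by extracting a sequence $r_n\nearrow1$ along which (\ref{rbound}) holds with errors $\epsilon_n\to0$, running the normal-families argument for the rescaled functions $f_n(z)=\Lambda_n(\psi(r_nz))S_{r_n,n}(z)/(a_nz^{n-m})$ to get $f_n\to1$ locally uniformly on $\barc\setminus\bard$, and only then transferring to $r=1$ by substituting $z=w/r_n$ and proving the locally uniform convergence $S_{r_n,n}(w/r_n)S_{1,\infty}(\infty)/\bigl(S_{r_n,n}(\infty)S_{1,\infty}(w)\bigr)\to1$ (the step (\ref{sconv}), via dominated convergence for the Herglotz kernels $H(z/r_n)\to H(z)$). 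This rescaling-and-transfer argument is the essential missing piece of your proposal; without it, or some substitute continuity-in-$r$ argument, the proof does not reach the stated conclusion at $r=1$.
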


\begin{proof}
Let $q>0$ be fixed throughout this proof and denote $S_{r,n}(z;q)$ by $S_{r,n}(z)$ and $\Lambda_n(z;\mu,q)$ by $\Lambda_n(z;\mu)$.

We showed in Section \ref{product} (see equation (\ref{subharm})) that if $r\in[\rho,1]$ then
\begin{align}\label{schwint}
r^{q(n-m)}S_{r,n}(\infty)^q\leq
\int_0^{2\pi}|\Lambda_n(\psi(r\eitheta);\mu)S_{r,n}(\eitheta)|^q\dtpi
\end{align}
for all $r\in[\rho,1]$.  Let us fix some $t<1$.  If we divide both sides of (\ref{schwint}) by $c_{qn}(\tau)$ and then integrate in the variable $r$ from $t$ to $1$ with respect to $\tau$, then both sides converge to $S_{1,\infty}(\infty)^q$ as $\nri$ (by Theorem \ref{bigone}).  Therefore, (\ref{schwint}) is optimal in that we cannot multiply the right hand side by a factor smaller than $1$ and have the inequality remain valid for all $r\in[t,1]$ when $n$ is sufficiently large.   It follows that for any $\epsilon>0$, there exists a sequence $\{r_n\}_{n=1}^{\infty}$ converging to $1$ from below as $\nri$ so that
\begin{align}\label{rbound}
r_n^{q(n-m)}S_{r_n,n}(\infty)^q\geq
(1-\epsilon)\int_0^{2\pi}|\Lambda_n(\psi(r_n\eitheta);\mu)S_{r_n,n}(\eitheta)|^q\dtpi.
\end{align}
By a standard argument, we can choose our sequence $\{r_n\}_{n=1}^{\infty}$ converging to $1$ from below so that (\ref{rbound}) remains true for some sequence $\epsilon_n$ tending monotonically to $0$ from above.  Let $a_n:=\|\Lambda_n(\psi(r_n\eitheta);\mu)S_{r_n,n}(\eitheta)\|_{L^q(\dtpi)}$.
By using (\ref{schwint}) and (\ref{rbound}) we see that
\begin{align}\label{infvalu}
1-\epsilon_n\leq
\left|\lim_{z\rightarrow\infty}\frac{\Lambda_n(\psi(r_nz);\mu)S_{r_n,n}(z)}{a_nz^{n-m}}\right|^q
\leq1.
\end{align}
Let
\[
f_n(z)=\frac{\Lambda_n(\psi(r_nz);\mu)S_{r_n,n}(z)}
{a_nz^{n-m}}.
\]
Clearly $\|f_n\|_{H^q(\barc\setminus\bard)}=1$ for all $n$ so we can find a subsequence $\mcn\subseteq\bbN$ so that $f_n$ converges uniformly on compact subsets of $\barc\setminus\bard$ to some (analytic) function $\tilde{f}$ (by a normal families argument and Lemma 1.1 in \cite{Kalicurve}).  Equation (\ref{infvalu}) shows that $\tilde{f}(\infty)=1$ while $\|\tilde{f}\|_{H^q(\barc\setminus\bard)}\leq\limsup_{n\in\mcn}\|f_n\|_{H^q(\barc\setminus\bard)}=1$ (see Lemma 1.2 in \cite{Kalicurve}).  However, clearly $\|\tilde{f}\|_{H^q(\barc\setminus\bard)}\geq1$ since $\tilde{f}(\infty)=1$.  This means $\int_0^{2\pi}|\tilde{f}(r\eitheta)|^q\dtpi=1$ for all $r\geq1$.  Furthermore, the proof of Corollary \ref{closertozone} and the Hurwitz Theorem imply $\tilde{f}$ is non-vanishing in $\barc\setminus\bard$ and so $f^q$ is an analytic function on this domain.  This implies
\[
\Real\left[\int_0^{2\pi}|\tilde{f}(r\eitheta)|^q-\tilde{f}(r\eitheta)^q\dtpi\right]=0\qquad,\qquad r>1
\]
and it follows easily that $\tilde{f}\equiv1$.  The same argument applies to any subsequence of $\{f_n\}_{n\in\bbN}$ so $f_n$ converges to $1$ uniformly on compact subsets of $\barc\setminus\bard$.  Equations (\ref{schwint}) and (\ref{rbound}) show that $a_n=(1+\delta_n)r_n^{(n-m)}S_{r_n,n}(\infty)$ with $\delta_n\rightarrow0$ as $\nri$.  Therefore, if $|z|>1$, we have
\begin{align}\label{exterior}
\frac{\Lambda_n(\psi(r_nz))S_{r_n,n}(z)}{(r_nz)^{(n-m)}S_{r_n,n}(\infty)}\rightarrow1
\end{align}
and the convergence is uniform on compact subsets of $\overline{\bbC}\setminus\bard$.

Now choose $w\not\in\bard$.  Notice that the function $H(z)=(\eitheta+z)(\eitheta-z)^{-1}$ is a conformal map from $\barc\setminus\bard$ to the left half-plane.  Therefore, $H(z/r_n)$ converges to $H(z)$ as $\nri$ and the convergence is uniform on compact subsets of $\barc\setminus\bard$ and uniform in $\theta$.  This observation and Dominated Convergence imply
\begin{align}\label{sconv}
\frac{S_{r_n,n}(w/r_n)S_{1,\infty}(\infty)}{S_{r_n,n}(\infty)S_{1,\infty}(w)}\rightarrow1
\end{align}
as $\nri$ and the convergence is uniform on compact subsets of $\barc\setminus\bard$.  By plugging in $z=w/r_n$ into (\ref{exterior}) and using the uniformity of convergence on compact subsets we recover
\[
\frac{\Lambda_n(\psi(w))S_{1,\infty}(w)}{w^{n-m}S_{1,\infty}(\infty)}\rightarrow1,
\]
which proves convergence.

To prove the uniformity, let $K\subseteq\barc\setminus\bard$ be a compact set.  There is a compact set $K_1\subseteq\barc\setminus\bard$ such that for all sufficiently large $n$, every $x\in K$ can be written as $r_nx_1^{(n)}$ for some $x_1^{(n)}\in K_1$.  Then
\begin{align}\label{unif}
\frac{\Lambda_n(\psi(w))S_{1,\infty}(w)}{w^{n-m}S_{1,\infty}(\infty)}&=
\frac{\Lambda_n(\psi(r_nw_1^{(n)}))S_{r_n,n}(w_1^{(n)})}{(r_nw_1^{(n)})^{n-m}S_{r_n,n}(\infty)}
\cdot\frac{S_{1,\infty}(w)S_{r_n,n}(\infty)}{S_{r_n,n}(w_1^{(n)})S_{1,\infty}(\infty)}.
\end{align}
We have already shown in (\ref{exterior}) that the first factor in (\ref{unif}) tends to $1$ uniformly on compact subsets as $\nri$.  Similarly, we have shown in (\ref{sconv}) that the second  factor in (\ref{unif}) tends to $1$ uniformly on compact subsets as $\nri$.
\end{proof}

\noindent\textit{Remark.}  One can in fact conclude that in the proof of Theorem \ref{szegconv}, the functions $f_n$ converge to $1$ in $H^q(\barc\setminus\bard)$ (see Theorem 1 in \cite{Keldysh}).

\vspace{2mm}

Theorem \ref{szegconv} yields the following Corollary, which says that the strong asymptotic behavior of the polynomials $P_n(\mu,q)$ is in some sense independent of $\tau$.

\begin{corollary}\label{stoutlim}
Let $\mu$ be as in Theorem \ref{szegconv} and let $\kappa$ be the measure on $\partial G$ given by $|y_{\infty}|^2\psi_*(h\nu)$.  Uniformly on compact subsets of $\overline{\bbC}\setminus\barg$ we have
\[
\lim_{\nri}\frac{\Lambda_n(z;\mu,q)}{P_{n-m}(z;\kappa,q)}=1.
\]
\end{corollary}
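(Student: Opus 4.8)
The plan is to apply the strong asymptotic formula of Theorem \ref{szegconv} twice: once to the measure $\mu$ and once to the measure $\kappa$, and then to compare the two resulting limits. Note first that $\kappa=|y_\infty|^q\psi_*(h\nu)$ is a measure supported on $\partial G$ with a Szeg\H{o} weight (here I am writing $|y_\infty|^q$ rather than $|y_\infty|^2$, as the statement should read for general $q$; when $q=2$ these coincide). Since $\kappa$ has no pure points outside $\barg$, the polynomial $y_n$ associated with $\kappa$ is trivial ($m=0$ for $\kappa$), so $\Lambda_{n-m}(z;\kappa,q)=P_{n-m}(z;\kappa,q)$ and Theorem \ref{szegconv} applies directly: with $\tilde S$ the relevant Szeg\H{o} function for the weight of $\phi_*\kappa=|y_\infty\circ\psi|^q\,h\nu$ in the sense of (\ref{essen}) (but now with no $y_n$ factor), we get
\[
\frac{P_{n-m}(\psi(z);\kappa,q)\,\tilde S(z)}{z^{n-m}\tilde S(\infty)}\to1
\]
uniformly on compacts of $\barc\setminus\bard$.

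Next I would record what Theorem \ref{szegconv} gives for $\mu$ itself. There, with $S_{1,\infty}(z;q)$ as in (\ref{essen}) — the Szeg\H{o} function for the weight $h(\eitheta)\nu'(\theta)|y_\infty(\psi(\eitheta))|^q$ (using that $w_{j,n}\to z_j$ by Corollary \ref{closertozone}, so $S_{1,n}\to S_{1,\infty}$) — we have
\[
\frac{\Lambda_n(\psi(z);\mu,q)\,S_{1,\infty}(z;q)}{z^{n-m}S_{1,\infty}(\infty;q)}\to1
\]
uniformly on compacts of $\barc\setminus\bard$. The key observation is that the boundary weight defining $S_{1,\infty}$ is exactly $\phi_*\kappa$ restricted to its absolutely continuous part: $(\phi_*\kappa)'(\theta)=h(\eitheta)\nu'(\theta)|y_\infty(\psi(\eitheta))|^q$. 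Since the Szeg\H{o} function depends only on the a.c. part of the weight, $S_{1,\infty}(\cdot;q)$ and $\tilde S(\cdot)$ are the same analytic function on $\barc\setminus\bard$. Dividing the two displayed limits and using that the $z^{n-m}$ and Szeg\H{o}-function factors cancel identically, we obtain
\[
\frac{\Lambda_n(\psi(z);\mu,q)}{P_{n-m}(\psi(z);\kappa,q)}\to1
\]
uniformly on compacts of $\barc\setminus\bard$; composing with $\phi$ gives the statement on compacts of $\barc\setminus\barg$.

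The main point requiring care — and the step I expect to be the real obstacle — is justifying that Theorem \ref{szegconv} may be applied to $\kappa$ and that the two Szeg\H{o} functions genuinely agree. For the first part one must check $\kappa$ fits the hypotheses of Theorem \ref{bigone}: it is a push-forward of a product measure with $\tau=\delta_1$, trivial $\sigma_1,\sigma_2$, and $m=\ell=0$, and its angular weight $h(\eitheta)\nu'(\theta)|y_\infty(\psi(\eitheta))|^q$ is a Szeg\H{o} weight precisely because $\nu$ is a Szeg\H{o} measure and $|y_\infty\circ\psi|^q$ is bounded above and below on $\partial G$ (as $z_j\notin\barg$), while $h$ is bounded above and below near $\partial\bbD$; hence $\log$ of it is integrable. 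For the second part one must be careful that the $y_n$ appearing in (\ref{essen}) for $\mu$ converges to $y_\infty$ uniformly on $\partial G$ — this is exactly the content of $w_{j,n}\to z_j$ from Lemma \ref{closetozone}, refined by Corollary \ref{closertozone} — so that $S_{1,n}(z;q)\to S_{1,\infty}(z;q)$ uniformly on compacts; but since both displayed limits in Theorem \ref{szegconv} are already stated with $S_{1,\infty}$, this subtlety has in fact been absorbed there, and the corollary follows by the straightforward division argument above.
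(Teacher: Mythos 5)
Your proof is correct and follows essentially the same strategy as the paper: both $\Lambda_n(\psi(z);\mu,q)z^{-(n-m)}$ and $P_{n-m}(\psi(z);\kappa,q)z^{-(n-m)}$ converge uniformly on compact subsets of $\barc\setminus\bard$ to the same non-vanishing limit $S_{1,\infty}(\infty;q)S_{1,\infty}(z;q)^{-1}$, and the claim follows by dividing. The one place you diverge is in how you obtain the strong asymptotics for $P_{n-m}(z;\kappa,q)$: the paper simply cites Theorem 2.1 of \cite{Kalicurve}, which provides Szeg\H{o} asymptotics for $L^q$ extremal polynomials on an analytic curve, whereas you re-apply Theorem \ref{szegconv} to $\kappa$ itself by taking $\tau=\delta_1$, $m=\ell=0$, and absorbing $|y_{\infty}\circ\psi|^q$ into the weight $h$. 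Both routes work; yours is more self-contained but requires the (easy) verification that the modified weight fits the hypotheses of Theorem \ref{bigone}, in particular that $|y_{\infty}(\psi(\eitheta))|^q$ is bounded above and below because no $z_j$ lies in $\barg$, which you do carry out. Your remark that the exponent in the definition of $\kappa$ should be $q$ rather than $2$ for general $q$ is also well taken: the paper's own proof implicitly assumes this, since it asserts that both quotients converge to the \emph{same} function $S_{1,\infty}(\infty)S_{1,\infty}(\phi(z))^{-1}$, which requires the weight of $\phi_*\kappa$ to be exactly the one appearing in (\ref{essen}).
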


\begin{proof}
By Theorem \ref{szegconv} above and Theorem 2.1 in \cite{Kalicurve}, both $\frac{\Lambda_n(z;\mu,q)}{\phi(z)^{n-m}}$ and $\frac{P_{n-m}(z;\kappa,p)}{\phi(z)^{n-m}}$ converge to $S_{1,\infty}(\infty)S_{1,\infty}(\phi(z))^{-1}$ uniformly on compact subsets of $\overline{\bbC}\setminus\barg$ so the claim follows.
\end{proof}

Now that we have some information about the behavior of $P_n(z;\mu,q)$ outside $\barg$, we will consider what happens close to the boundary of $G$.  Our next result is motivated in part by Theorem 9.3.1 in \cite{OPUC2}.  As in Theorem \ref{szegconv}, we will consider all $q>0$.

\begin{theorem}\label{boundarymeasure}
If $\mu$ is as in Theorem \ref{bigone}, $q>0$, and $\nu$ is a Szeg\H{o} measure on $\partial\bbD$ then
\[
w\mbox{-}\lim_{\nri}|p_n(z;\mu,q)|^qd\mu(z)=d\omega_{\barg}(z)
\]
as measures on $\bbC$.
\end{theorem}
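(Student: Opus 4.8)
The plan is to establish the weak convergence $w\text{-}\lim_{\nri}|p_n(z;\mu,q)|^qd\mu(z)=d\omega_{\barg}(z)$ by testing against polynomials of the form $\phi(z)^k$ for $k\in\bbZ$ (and their conjugates), since linear combinations of these are dense in $C(\partial G)$ by Mergelyan-type approximation (every continuous function on $\partial G$ is a uniform limit of trigonometric polynomials in $\phi$ because $\phi$ maps $\partial G$ homeomorphically to $\partial\bbD$). Since each measure $|p_n|^qd\mu$ is a probability measure (because $\|p_n(\mu,q)\|^q_{L^q(\mu)}=1$ by normalization) and the supports are contained in the fixed compact set $\barg$, the family is tight, so it suffices to show that every weak subsequential limit equals $\omega_{\barg}$. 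A subsequential limit is necessarily supported on $\partial G$: by Theorem~\ref{szegconv} together with Corollary~\ref{closertozone}, $p_n(z;\mu,q)=P_n(z;\mu,q)/\|P_n(\mu)\|_\mu$ behaves like $\phi(z)^n$ times a bounded non-vanishing factor off $\barg$, and on compact subsets of $G$ we have $\int_K|p_n|^qd\mu\to0$ (the remark following Corollary~\ref{singzero}); hence no mass escapes to the interior.

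Next I would compute the moments $\int |p_n(z)|^q\phi(z)^k\,d\mu(z)$ for $k\geq1$. Writing $p_n=P_n/\|P_n(\mu)\|_\mu$ and using $P_n=y_n\Lambda_n$, I would split the integral into the part over $G_\rho$ (the annular collar near $\partial G$, where $\psi$ is a diffeomorphism and $\mu$ looks like $\psi_*(h(\nu\otimes\tau))$ plus negligible pieces by Corollary~\ref{singzero}) and the rest, which is negligible. On $G_\rho$, substituting $z=\psi(r\eitheta)$, the integrand becomes $|\Lambda_n(\psi(r\eitheta))|^q|y_n(\psi(r\eitheta))|^q r^k e^{ik\theta}h(r\eitheta)\nu'(\theta)/\|P_n(\mu)\|_\mu^q$, and using the Szeg\H{o}-function bookkeeping from Section~\ref{strong} one rewrites $|\Lambda_n(\psi(r\eitheta))|^q|y_n(\psi(r\eitheta))|^qh(r\eitheta)\nu'(\theta)=|\Lambda_n(\psi(r\eitheta))S_{r,n}(\eitheta)|^q$. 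By the optimality of the inequality (\ref{schwint})--(\ref{rbound}) established in the proof of Theorem~\ref{szegconv}, the functions $\Lambda_n(\psi(r_nz))S_{r_n,n}(z)/(a_nz^{n-m})$ converge to $1$ in $H^q$, so $|\Lambda_n S_{r,n}|^q$ concentrates near $|z|=1$ with total mass matching $\|P_n(\mu)\|_\mu^q/c_{qn}(\tau)\cdot$ (collar contribution). One then extracts that $\int|p_n|^q\phi^k\,d\mu\to 0$ for $k\geq1$: the factor $e^{ik\theta}$ integrates against an approximately rotation-invariant density on the circle, killing the nonzero frequency. By conjugation the same holds for $k\leq-1$, and for $k=0$ the integral is $1$. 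These are exactly the moments of $\omega_{\barg}$: indeed $\int\phi(z)^kd\omega_{\barg}(z)=\int e^{ik\theta}\dtpi=\delta_{k,0}$ since $\omega_{\barg}=\psi_*(\dtpi)$.

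A cleaner route to the moment computation, which I would actually pursue, is to avoid reproving the $H^q$ concentration and instead invoke regularity: by Proposition~\ref{nuregreg} the measure $\mu$ is regular, and for regular measures it is a standard fact (see Theorem~3.2.3 or the surrounding discussion in \cite{StaTo}) that the normalized zero counting measures of $P_n$ converge weakly to $\omega_{\barg}$, and more relevantly that $w\text{-}\lim|p_n|^2d\mu=\omega_{\barg}$ in the $q=2$ case. For general $q>0$ the analogue follows from combining Theorem~\ref{bigone} (which pins down $\|P_n(\mu)\|_\mu^q$) with Theorem~\ref{szegconv} (which pins down the behavior of $p_n$ off $\barg$ and on the collar). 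Concretely: for $k\geq0$, $\int\phi(z)^k|p_n(z)|^qd\mu=\|P_n(\mu)\|_\mu^{-q}\int_{G_\rho}\phi^k|P_n|^qd\mu+o(1)$, and the latter integral, after the change of variables and the Szeg\H{o}-function rewriting, is $\int_\rho^1\!\int_0^{2\pi}e^{ik\theta}|\Lambda_n(\psi(r\eitheta))S_{r,n}(\eitheta)|^q\dtpi\,d\tau(r)$; by (\ref{exterior}) and dominated convergence this behaves like $\int_\rho^1 r^{q(n-m)+k}S_{r,n}(\infty)^q\cdot\big(\int_0^{2\pi}e^{ik\theta}\,\cdot\,$(correction)$\,\dtpi\big)d\tau(r)$ where the correction tends to $1$ uniformly, so the angular integral tends to $\delta_{k,0}$, and the radial integral is asymptotically $c_{qn}(\tau)\exp(\int\log(h\nu')\dtpi)\prod|\phi(z_j)|^q=\|P_n(\mu)\|_\mu^q(1+o(1))$ by Theorem~\ref{bigone} and Proposition~\ref{psiint}.

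The main obstacle is the interchange of limits hidden in the phrase ``the angular integral tends to $\delta_{k,0}$'': one must justify that $|\Lambda_n(\psi(r\eitheta))S_{r,n}(\eitheta)|^q$, after normalizing by its own integral, converges to the constant $1$ in a strong enough sense (e.g. in $L^1(\dtpi)$) uniformly for $r$ in a set of positive $\tau$-measure accumulating at $1$, so that multiplying by the bounded oscillatory factor $e^{ik\theta}$ and the $\tau$-integration both survive the passage to the limit. This is precisely the content of the $H^q$-convergence strengthening noted in the remark after Theorem~\ref{szegconv} (via \cite{Keldysh}), together with the fact that on $\partial\bbD$ the $H^q$ boundary function of the limit is identically $1$; I would cite that remark and spell out the dominated-convergence argument in the $r$-variable, taking care that the dominating function is $\tau$-integrable, which follows from condition (2) of the Remark after Theorem~\ref{bigone} (continuity of $r\mapsto\int\log h(r\eitheta)\dtpi$) and the uniform bounds $\delta_1<h<\delta_2$.
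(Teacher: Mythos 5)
Your overall skeleton matches the paper's: reduce to showing the moments $\int\phi(z)^k|p_n(z)|^q\,d\mu\to 0$ for $k\geq1$ after localizing to the collar $G_\rho$, and exploit the fact that Theorem \ref{bigone} forces asymptotic equality in the subharmonicity inequality (\ref{schwint}) after integrating in $r$ against $\tau$. But there is a genuine gap at exactly the point you flag as ``the main obstacle,'' and the fix you propose does not close it. The $H^q$-convergence from the remark after Theorem \ref{szegconv} (via \cite{Keldysh}) is established only along the single sequence $\{r_n\}$ constructed in that proof; it gives no control of the angular integrals $\int_0^{2\pi}e^{ik\theta}|\Lambda_n(\psi(r\eitheta))S_{r,n}(\eitheta)|^q\dtpi$ for the other values of $r$ that carry the $\tau$-mass under the weight $r^{qn}d\tau(r)/c_{qn}(\tau)$. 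Dominated convergence in $r$ cannot substitute for this: it presupposes pointwise-in-$r$ convergence of those angular integrals, which is precisely what is not available. You also never address the Blaschke factor: for $r<1$ the function $z^{-(n-m)}\Lambda_n(\psi(rz))S_{r,n}(z)$ may vanish in $|z|>1$ (zeros of $P_n$ in the annulus between $\Gamma_r$ and $\Gamma_1$), so near-equality in (\ref{schwint}) does not by itself say the function is close to a constant.

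The paper's proof supplies the missing quantitative device. It factors $z^{-(n-m)}\Lambda_n(\psi(rz))S_{r,n}(z)=B_{r,n}(z)J_{r,n}(z)$ with $B_{r,n}$ a Blaschke product and $J_{r,n}$ zero-free, writes $J_{r,n}^{q/2}=J_{r,n}(\infty)^{q/2}+g_{r,n}$ with $g_{r,n}\in\bbH^2_0(\barc\setminus\bard)$, and deduces from (\ref{ratione}) the \emph{integrated} estimate (\ref{tozero}), i.e. $\int_\rho^1\|g_{r,n}\|^2_{\bbH^2}\,d\tau(r)=o\bigl(c_{qn}(\tau)|S_{1,n}(\infty)|^q\bigr)$, valid for all $r$ simultaneously under the $\tau$-integral. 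Expanding $|J_{r,n}(\eitheta)^{q/2}|^2$ then bounds the $k$-th Fourier coefficient by $\|g_{r,n}\|^2_{\bbH^2}$ plus a cross term controlled by Cauchy--Schwarz, which is what actually kills the $k\neq0$ moments. A second, smaller error: your ``cleaner route'' asserts that regularity alone gives $w\mbox{-}\lim|p_n|^2d\mu=\omega_{\barg}$; regularity yields only the Ces\`{a}ro-averaged statement $\frac{1}{n+1}K_n(z,z)\,d\mu\to\omega_{\barg}$ (Corollary \ref{kernelmeasure}), and the paper's remark there explicitly notes that Theorem \ref{boundarymeasure} is much stronger, so that route cannot be used as a shortcut.
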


\begin{proof}
Let $q>0$ be fixed and denote by $p_n$ the polynomial $p_n(z;\mu,q)$.
Corollary \ref{singzero} and the remark following it imply that any weak limit of the measures $\{|p_n|^qd\mu\}_{n\in\bbN}$ must be a measure on $\partial G$ and that we may without loss of generality assume that $\sigma_1=\sigma_2=0$, $\ell=0$, and $\nu$ is purely absolutely continuous with respect to Lebesgue measure.  Let us recall the definition of $S_{r,n}(z)=S_{r,n}(z;q)$ from (\ref{essen}) for $r\in[\rho,1]$ and $z\in\overline{\bbC}\setminus\bard$.

We showed in Theorem \ref{bigone} that
\begin{align}\label{ratione}
\int_{\rho}^1\int_0^{2\pi}\frac{\left|e^{-i(n-m)\theta}\Lambda_n(\psi(r\eitheta))\right|^q
|S_{r,n}(\eitheta)|^q}{c_{qn}(\tau)|S_{1,n}(\infty)|^q}\dtpi d\tau(r)\rightarrow1
\end{align}
as $\nri$.

For fixed $n\in\bbN$ and $r\in[\rho,1]$, let $\{u_1,\ldots,u_{\eta_n(r)}\}$ be the zeros of $\Lambda_n(\psi(rz))$ ($\eta_n\in\bbN_0$) lying outside of $\bard$, each listed a number of times equal to its multiplicity as a zero.  We may then define the Blaschke product
\[
B_{r,n}(z)=\prod_{j=1}^{\eta_n(r)}\frac{z-u_j}{z\bar{u}_j-1}\cdot\frac{\bar{u}_j}{|u_j|}.
\]
With this notation, we may define $J_{r,n}(z)$ so that
\begin{align}\label{atinf}
z^{-(n-m)}\Lambda_n(\psi(rz))S_{r,n}(z)=B_{r,n}(z)J_{r,n}(z).
\end{align}
From (\ref{atinf}), we know that $J_{r,n}(z)$ is analytic and non-vanishing in $\barc\setminus\bard$ so we may write
\begin{align}\label{conatinf}
J_{r,n}(z)^{q/2}=J_{r,n}(\infty)^{q/2}+g_{r,n}(z)
=\left(\frac{r^{n-m}S_{r,n}(\infty)}{B_{r,n}(\infty)}\right)^{q/2}+g_{r,n}(z),
\end{align}
where $g_{r,n}(z)$ is in $\bbH^2(\barc\setminus\bard)$ and is orthogonal to the constant functions in $\bbH^2(\barc\setminus\bard)$ (that is, $g_{r,n}(z)\in\bbH^2_0(\barc\setminus\bard)$ in the notation of \cite{Duren}).  Notice that
\[
|e^{-i(n-m)\theta}\Lambda_n(\psi(r\eitheta))S_{r,n}(\eitheta)|^q=|J_{r,n}(\eitheta)^{q/2}|^2.
\]
If we plug (\ref{conatinf}) into (\ref{ratione}), we get
\begin{align}\label{gneg}
\int_{\rho}^1\frac{r^{qn-qm}|S_{r,n}(\infty)|^q}{c_{qn}(\tau)|S_{1,n}(\infty)|^qB_{r,n}(\infty)^q}+
\frac{\|g_{r,n}\|^2_{\bbH^2}}{c_{qn}(\tau)|S_{1,n}(\infty)|^q}d\tau(r)
\rightarrow1
\end{align}
as $\nri$.  However, $B_{r,n}(\infty)^{-q}>1$ and the second term is always non-negative, so we conclude that the first term in (\ref{gneg}) has integral tending to $1$ as $\nri$ and hence
\begin{align}\label{tozero}
\int_{\rho}^1\frac{\|g_{r,n}\|^2_{\bbH^2}}{c_{qn}(\tau)|S_{1,n}(\infty)|^q}d\tau(r)\rightarrow0
\end{align}
as $\nri$.

Now fix $k\in\bbN$.  We have
\begin{align}
&\label{momint} \int_{G_{\rho}}\phi(z)^k|p_n(z)|^qd\mu(z)\\
\nonumber&\qquad=\int_{\rho}^1\int_0^{2\pi}
\frac{r^ke^{ik\theta}|e^{-i(n-m)\theta}\Lambda_n(\psi(r\eitheta))|^q|S_{r,n}(\eitheta)|^q}
{c_{qn}(\tau)|S_{1,n}(\infty)|^q}\dtpi d\tau(r)+o(1)\\
\nonumber&\qquad=\int_{\rho}^1\int_0^{2\pi}\frac{r^ke^{ik\theta}|r^{q(n-m)/2}S_{r,n}(\infty)^{q/2}
B_{r,n}(\infty)^{-q/2}+g_{r,n}(\eitheta)|^2}
{c_{qn}(\tau)|S_{1,n}(\infty)|^q} \dtpi d\tau(r)+o(1)\\
&\label{htwoproof}\qquad=\int_{\rho}^1\int_0^{2\pi}\frac{r^{k+q(n-m)}e^{ik\theta}|S_{r,n}(\infty)|^q}
{c_{qn}(\tau)|S_{1,n}(\infty)|^qB_{r,n}(\infty)^{q}}\dtpi d\tau(r)
+\int_{\rho}^1\int_0^{2\pi}\frac{r^{k}e^{ik\theta}|g_{r,n}(\eitheta)|^2}
{c_{qn}(\tau)|S_{1,n}(\infty)|^q} \dtpi d\tau(r)\\
\nonumber&\qquad\qquad+\int_{\rho}^1\int_0^{2\pi}\frac{r^{k+q(n-m)/2}e^{ik\theta}S_{r,n}(\infty)^{q/2}
\cdot2\Real[g_{r,n}(\eitheta)]}{c_{qn}(\tau)|S_{1,n}(\infty)|^qB_{r,n}(\infty)^{q/2}} \dtpi d\tau(r)+o(1)
\end{align}
as $\nri$.  If we send $n$ to infinity, the first term in (\ref{htwoproof}) converges to $0$ since $k\in\bbN$.  The second term in (\ref{htwoproof}) can be bounded from above in absolute value by
\begin{align}\label{twothreebound}
\int_{\rho}^1\frac{\|g_{r,n}\|^2_{\bbH^2}}{c_{qn}(\tau)|S_{1,n}(\infty)|^q}d\tau(r),
\end{align}
which tends to $0$ by (\ref{tozero}).  By applying the Schwartz inequality, the third term in (\ref{htwoproof}) can be bounded from above in absolute value by
\[
\left(\int_{\rho}^1\frac{r^{2k+q(n-m)}S_{r,n}(\infty)^q}{c_{qn}(\tau)S_{1,n}(\infty)^qB_{r,n}(\infty)^q}d\tau(r)\right)^{1/2}
\left(\int_{\rho}^1\frac{4|\int_0^{2\pi}e^{ik\theta}\Real[g_{r,n}(\eitheta)]\dtpi|^2}
{c_{qn}(\tau)S_{1,n}(\infty)^q}d\tau(r)\right)^{1/2}
\]
The first factor tends to $1$ as $\nri$ (as in (\ref{gneg})).  After applying Jensen's inequality to the second factor, we can bound it from above by twice the square root of (\ref{twothreebound}).  Therefore the integral (\ref{momint}) tends to $0$ as $\nri$.

We conclude that if $\gamma$ is a weak limit point of the measures $\{|p_n(\mu)|^qd\mu\}_{n\in\bbN}$, then for every $k\in\bbN$ we have
\[
\int_{\partial G}\phi(z)^kd\gamma=0.
\]
This implies that $\gamma$ is induced (via $\psi$) by a measure $\kappa$ on $\partial\bbD$ with no non-trivial moments, i.e. $d\kappa=\dtpi$ and it follows that $\gamma$ is the equilibrium measure for $\barg$ (see Theorem 3.1 in \cite{ToTrans}).
\end{proof}

Theorem \ref{boundarymeasure} yields the following corollary, which can be interpreted in terms of the Christoffel functions discussed in Section \ref{christo} (see (\ref{lambdn}) below).

\begin{corollary}\label{kernelmeasure}
Under the hypotheses of Theorem \ref{boundarymeasure}, we have
\[
w\mbox{-}\lim_{\nri}\frac{K_n(z,z)}{n+1}d\mu(z)=d\omega_{\barg}
\]
as measures on $\bbC$ where $K_n(z,w)=\sum_{j=0}^np_j(z;\mu,2)\overline{p_j(w;\mu,2)}$ is the reproducing kernel for the measure $\mu$ and polynomials of degree at most $n$.
\end{corollary}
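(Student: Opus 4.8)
The plan is to deduce Corollary \ref{kernelmeasure} from Theorem \ref{boundarymeasure} by a Cesàro-averaging argument, using the standard variational characterization of the Christoffel function. Recall that the Christoffel function satisfies
\[
\frac{1}{K_n(z,z)}=\min\left\{\int_{\bbC}|Q(w)|^2\,d\mu(w):\deg Q\leq n,\ Q(z)=1\right\},
\]
and that $K_n(z,z)=\sum_{j=0}^n|p_j(z;\mu,2)|^2$. The key identity is that integrating the reproducing kernel against $\mu$ telescopes: since $\int|p_j|^2\,d\mu=1$ for each $j$, we have $\int K_n(z,z)\,d\mu(z)=n+1$, so $\frac{K_n(z,z)}{n+1}d\mu$ is a probability measure for every $n$, and the family is tight (all are carried by $\barg\cup\{z_j\}\cup\{\zeta_j\}$, a fixed compact set). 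Thus it suffices to identify the unique weak limit point.

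First I would write, for a fixed test function $f\in C(\bbC)$,
\[
\int_{\bbC}f(z)\frac{K_n(z,z)}{n+1}\,d\mu(z)=\frac{1}{n+1}\sum_{j=0}^n\int_{\bbC}f(z)|p_j(z;\mu,2)|^2\,d\mu(z).
\]
By Theorem \ref{boundarymeasure} (applied with $q=2$), the sequence $b_j:=\int f(z)|p_j(z;\mu,2)|^2\,d\mu(z)$ converges to $\int f\,d\omega_{\barg}$ as $j\to\infty$. A convergent sequence has convergent Cesàro averages with the same limit, so the right-hand side above converges to $\int f\,d\omega_{\barg}$. Since this holds for every $f\in C(\bbC)$ and the measures are all supported in a common compact set, this is exactly the asserted weak convergence.

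I do not anticipate a genuine obstacle here: the only things that could go wrong are (i) the measures failing to be probability measures or failing tightness — both handled by the telescoping identity $\int K_n\,d\mu=n+1$ and the fixed compact carrier — and (ii) the Cesàro step, which is the elementary Cauchy fact that $a_j\to L$ implies $\frac{1}{n+1}\sum_{j\le n}a_j\to L$. One small point worth making explicit is that Theorem \ref{boundarymeasure} is stated as weak convergence of measures, so to extract convergence of the scalar sequence $b_j$ for a general (not necessarily continuous on all of $\bbC$) $f$ one restricts to $f\in C(\bbC)$, which is all that is needed to conclude weak convergence of the averaged measures; no issue arises since weak convergence is tested precisely against such $f$. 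Hence the corollary follows immediately, and the proof is essentially a two-line reduction to Theorem \ref{boundarymeasure} plus the Cesàro lemma.
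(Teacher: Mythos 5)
Your proof is correct and is precisely the argument the paper has in mind: the corollary is stated without proof because it follows from Theorem \ref{boundarymeasure} (with $q=2$) by exactly the Ces\`aro-averaging observation you make, using $K_n(z,z)=\sum_{j=0}^n|p_j(z;\mu,2)|^2$ and $\int|p_j|^2\,d\mu=1$. The only additional content in the paper is a remark noting an alternative proof via regularity, Corollary \ref{singzero}, and the results of \cite{WeakCD}, which you were not expected to reproduce.
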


\noindent\textit{Remark.}  Since $\mu$ is regular, one can use a polynomial approximation argument, Corollary \ref{singzero}, and the results in \cite{WeakCD} to arrive at a different proof of Corollary \ref{kernelmeasure}.  Theorem \ref{boundarymeasure} is of course much stronger.

\vspace{2mm}

In the next section, we will consider the behavior of the Christoffel functions on $\barg$.

\section{Christoffel Functions}\label{christo}

In this section we will turn our attention to an interesting minimization problem.  For each $n\in\bbN$ and $q>0$, let us define the \textit{Christoffel function} $\lambda_n(z;\mu,q)$ by
\begin{align*}
\lambda_n(z;\mu,q)=\inf\left\{\int_{\bbC}|Q(w)|^qd\mu(w):Q(z)=1\,,\,\textrm{deg}(Q)\leq n\right\}.
\end{align*}
For $z\in\bbC$ fixed, $\lambda_n(z;\mu,q)$ is obviously non-increasing (as $\nri$) and positive, so we may define $\lambda_{\infty}(z;\mu,q)=\lim_{\nri}\lambda_n(z;\mu,q)$.  It is clear that
\[
\lambda_{\infty}(z;\mu,q)=\inf\left\{\int_{\bbC}|Q(w)|^q\,d\mu(w):
\, Q(z)=1\, ,\,Q \mbox{ polynomial}\right\}.
\]
The behavior of $\lambda_{\infty}(z;\mu,q)$ is particularly easy to describe when $z\in\partial G$.

\begin{proposition}\label{christoboundary}
If $\mu$ is any measure with support in $\barg$ and $G$ has analytic boundary then $\lambda_{\infty}(x;\mu,q)=\mu(\{x\})$ for all $x\in\partial G$ and all $q>0$.
\end{proposition}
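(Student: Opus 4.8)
The plan is to establish the two inequalities $\lambda_\infty(x;\mu,q)\ge\mu(\{x\})$ and $\lambda_\infty(x;\mu,q)\le\mu(\{x\})$ separately. The first needs nothing beyond positivity of the integrand: if $Q$ is any polynomial with $Q(x)=1$, then $\int_{\bbC}|Q(w)|^q\,d\mu(w)\ge|Q(x)|^q\mu(\{x\})=\mu(\{x\})$, so $\lambda_n(x;\mu,q)\ge\mu(\{x\})$ for every $n$ and hence $\lambda_\infty(x;\mu,q)\ge\mu(\{x\})$, for every $q>0$.

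For the reverse inequality I would exhibit a sequence of polynomials $Q_n$ with $Q_n(x)=1$, with $\sup_n\sup_{\barg}|Q_n|<\infty$, and with $Q_n(z)\to0$ for every $z\in\barg\setminus\{x\}$. Granting such a sequence, $|Q_n(z)|^q\to\chi_{\{x\}}(z)$ pointwise on $\barg$ while $|Q_n|^q$ stays bounded by a fixed constant (for any $q>0$), which is an $L^1(\mu)$ dominating function since $\mu$ is finite; dominated convergence then gives $\int_{\barg}|Q_n|^q\,d\mu\to\mu(\{x\})$. As each $Q_n$ is an admissible competitor in the definition of $\lambda_\infty(x;\mu,q)$, this yields $\lambda_\infty(x;\mu,q)\le\mu(\{x\})$, and combining with the first part finishes the proof.

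The construction of the $Q_n$ rests on a holomorphic peak function for $x$. The point to watch is that the exterior map $\phi$ of the paper is defined only near $\partial G$ and the unbounded component of the complement, so I would instead use an \emph{interior} conformal map $\beta\colon G\to\bbD$ (which exists by the Riemann mapping theorem, $G$ being simply connected and bounded). Since $\partial G$ is an analytic Jordan curve, $\beta$ extends by Carath\'eodory's theorem to a homeomorphism $\barg\to\bard$ and, by analyticity of $\partial G$, even to a biholomorphism of an open neighborhood of $\barg$ onto an open neighborhood of $\bard$; set $w_0=\beta(x)\in\partial\bbD$ and $g:=\tfrac12\bigl(1+\overline{w_0}\,\beta\bigr)$. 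Then $g$ is holomorphic on a neighborhood of $\barg$, $g(x)=1$, and $|g(z)|\le1$ on $\barg$ with equality only at $z=x$ (because $|1+\overline{w_0}w|\le1+|w|\le2$ for $|w|\le1$, with equality throughout only when $w=w_0$). Consequently $g^n$ is holomorphic on a neighborhood of $\barg$; as $\barc\setminus\barg$ is connected ($\barg$ being the closure of a Jordan domain), Runge's theorem supplies polynomials $p_n$ with $\sup_{\barg}|p_n-g^n|\to0$, and since $g^n(x)=1$ the values $p_n(x)$ tend to $1$, so for large $n$ we may set $Q_n:=p_n/p_n(x)$. This $Q_n$ has $Q_n(x)=1$, satisfies $\sup_{\barg}|Q_n|\le2$ for large $n$, and $\sup_{\barg}|Q_n-g^n|\to0$; since $|g(z)|<1$ for $z\in\barg\setminus\{x\}$ we get $Q_n(z)\to0$ there, as wanted. (One could replace Runge by Mergelyan and dispense with the analytic-boundary hypothesis, keeping only the Carath\'eodory extension.)

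I do not expect a genuine obstacle: the argument is essentially the classical fact that every boundary point of a Jordan domain is a peak point for the polynomial (or disk) algebra, transported to the $L^q(\mu)$ setting by dominated convergence. The one thing that requires care is precisely the remark above — the natural peak function must be built from an interior conformal map rather than from $\phi$ — together with checking that this peak function is holomorphic on a full neighborhood of $\barg$, which is where analyticity of $\partial G$ (or, failing that, Mergelyan's theorem) enters.
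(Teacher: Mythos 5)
Your proposal is correct and follows essentially the same route as the paper: the paper also builds a peak function from the interior Riemann map extended across the analytic boundary (it uses $f_n=3^{-n}(\varphi+2)^n$ with $\varphi(x)=1$, the exact analogue of your $g^n=2^{-n}(1+\overline{w_0}\beta)^n$), approximates $f_n$ uniformly on $\barg$ by polynomials, renormalizes to equal $1$ at $x$, and concludes by dominated convergence. The only differences are cosmetic (choice of normalization of the conformal map, and your explicit remark that Mergelyan would remove the need for the analytic continuation).
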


\noindent\textit{Remark.}  For Proposition \ref{christoboundary}, we do not need to assume $\cpct(\barg)=1$.

\begin{proof}
Fix $x\in\partial G$.  It is obvious that $\lambda_n(x;\mu,q)\geq\mu(\{x\})$ for every $n\in\bbN$, so it remains to show the reverse inequality holds in the limit.  Since $\partial G$ is analytic, we can define a conformal map $\varphi:G\rightarrow\bbD$ satisfying $\varphi(x)=1$.  By a well-known argument, this map $\varphi$ has an analytic continuation to some open set $U\supseteq\barg$.  Define
\[
f_n(z):=3^{-n}(\varphi(z)+2)^n\qquad,\qquad z\in U
\]
so that $f_n(x)=1=\|f_n\|_{L^{\infty}(\barg)}$.  By Theorem 2.5.7 in \cite{StSh} there exists a sequence of polynomials $\{W_n\}_{n\in\bbN}$ so that $\|W_n-f_n\|_{L^{\infty}(\barg)}<n^{-1}$ (we do not assume $W_n$ has degree $n$).  It follows that for each $n\in\bbN$ there is a constant $a_n=1+o(1)$ (as $\nri$) so that $a_nW_n(x)=1$.  Then (with $E_n=W_n-f_n$)
\[
\lambda_n(x;\mu,q)\leq\int_{\barg}|a_nW_n(z)|^qd\mu(z)=(1+o(1))\int_{\barg}|f_n(z)+E_n|^qd\mu(z)
\rightarrow\mu(\{x\})
\]
by Dominated Convergence.
\end{proof}

\noindent\textit{Remark.}  For results producing more precise asymptotics of $\lambda_n(z;\mu,2)$ for $z\in\partial G$ under stronger hypotheses on $\mu$, see \cite{LubBerg,ToTrans}.

\vspace{2mm}

Now let us focus on $x\in G$.  For measures supported on the unit circle, it is known (see Theorem 2.5.4 in \cite{OPUC1}) that if $\nu$ is a Szeg\H{o} measure then $\lambda_{\infty}(z;\nu,q)>0$ for all $z\in\bbD$ and $q\in(0,\infty)$.  We will prove an analog for the kinds of measures considered in Theorem \ref{bigone}.  Before we can do this, we need to define some auxiliary notation.  For $x$ interior to $\Gamma_{1}$, define
\[
\xi(x)=\frac{1}{2}\left(1+\inf\{r:x\in\mcg_r\,,\, r\geq\rho\}\right).
\]
For each $r\in[\xi(x),1]$, let $\chi_{r,x}$ be the conformal map from $\bbD$ to $\mcg_r$ that sends $0$ to $x$ and satisfies $\chi_{r,x}'(0)>0$.  Denote the inverse to $\chi_{r,x}$ by $\varphi_{r,x}$.  The following lemma will be useful:

\begin{lemma}\label{varphideriv}
With the above notation, it holds that $\varphi_{r,x}$ converges to $\varphi_{1,x}$ uniformly on some open set containing $\barg$ as $r\rightarrow1$ and there is an $s\in(\xi(x),1)$ and positive constants $\lambda_1$ and $\lambda_2$ 
such that
\[
\lambda_1<|\varphi_{r,x}'(z)|<\lambda_2
\]
for all $r\in[s,1]$ and $z\in\barg$.
\end{lemma}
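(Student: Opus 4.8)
The plan is to establish the uniform convergence $\varphi_{r,x}\to\varphi_{1,x}$ first, and then deduce the derivative bounds from it together with the analytic continuation of $\varphi_{1,x}$. For the convergence, I would appeal to the Carath\'eodory kernel theorem (or equivalently the continuity of the Riemann map under Carath\'eodory convergence of domains; see e.g. Duren or Pommerenke). The domains $\mcg_r$ are nested, increase to $\mcg_1=G$ as $r\to1$, and all contain a fixed neighborhood of $x$ (since $r\geq\xi(x)>\inf\{r:x\in\mcg_r\}$), so the kernel of $\{\mcg_r\}$ with respect to $x$ is exactly $\mcg_1$. Hence the normalized Riemann maps $\chi_{r,x}\to\chi_{1,x}$ uniformly on compact subsets of $\bbD$. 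To upgrade this to uniform convergence of the \emph{inverse} maps on a neighborhood of $\barg$, I would use the fact that, because $\partial G$ is analytic, $\varphi_{1,x}=\chi_{1,x}^{-1}$ extends conformally past $\partial G$ to some open $U\supseteq\barg$, and $\chi_{1,x}$ correspondingly extends to a neighborhood of $\bard$; a standard normal-families/Hurwitz argument (each $\varphi_{r,x}$ is univalent on $\mcg_r\supseteq\barg$ for $r$ close to $1$, and the maps are uniformly bounded there since their images lie in $\bbD$) then forces $\varphi_{r,x}\to\varphi_{1,x}$ uniformly on an open set containing $\barg$.

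Granting uniform convergence of $\varphi_{r,x}$ to $\varphi_{1,x}$ on an open set $V\supseteq\barg$, the derivatives converge uniformly on a slightly smaller open set $V'\supseteq\barg$ by Cauchy's integral formula (or Weierstrass). Now $\varphi_{1,x}$ is a conformal map that is univalent on $V'$, so $\varphi_{1,x}'$ is continuous and non-vanishing on the compact set $\barg$; therefore there are constants $0<c_1\leq|\varphi_{1,x}'(z)|\leq c_2<\infty$ for all $z\in\barg$. By uniform convergence of the derivatives, one can pick $s\in(\xi(x),1)$ so that $\||\varphi_{r,x}'|-|\varphi_{1,x}'|\|_{L^\infty(\barg)}<c_1/2$ for all $r\in[s,1]$, and then $\lambda_1:=c_1/2$ and $\lambda_2:=c_2+c_1/2$ give the desired two-sided bound for all $r\in[s,1]$ and $z\in\barg$.

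The main obstacle is the first step: carefully justifying that Carath\'eodory kernel convergence of the increasing domains $\mcg_r\uparrow G$ yields convergence of the inverse maps uniformly on a \emph{full neighborhood} of $\barg$ rather than merely on compacta inside $G$. This is exactly where the analyticity of $\partial G$ (hence of $\Gamma_r$ for $r$ near $1$, via $\psi$) is essential — it guarantees the common domain of univalence overshoots $\partial G$ — and some care is needed to see that the extensions $\varphi_{r,x}$ are all defined and uniformly bounded on one fixed neighborhood of $\barg$ once $r$ is close enough to $1$. Everything after that is routine compactness and Cauchy-estimate bookkeeping.
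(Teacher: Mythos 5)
Your proposal is correct and follows essentially the same route as the paper's proof: Carath\'eodory convergence for the interior convergence, analytic continuation of the maps past $\partial G$ plus a normal-families argument to upgrade to uniform convergence on a neighborhood of $\barg$, and the Cauchy integral formula for the derivatives. The only (immaterial) difference is at the last step, where you obtain the lower bound on $|\varphi_{r,x}'|$ from the non-vanishing of $\varphi_{1,x}'$ on $\barg$ together with uniform convergence of the derivatives, whereas the paper instead runs the same upper-bound argument on the inverse maps $\chi_{r,x}$.
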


\noindent\textit{Remark.}  The proof of the lemma will actually show that when $r$ is sufficiently close to $1$, $\varphi_{r,x}$ is defined on all of $\barg$ so the statement of the lemma makes sense.

\begin{proof}
By the Carath\'{e}odory Convergence Theorem (see Theorem 3.1 in \cite{Dur}), the maps $\varphi_{r,x}$ converge to $\varphi_{1,x}$ uniformly on compact subsets of $G$ as $r\rightarrow1^-$ (see also Theorem 3 in \cite{SnipWard}).  Since $G$ has analytic boundary, a simple argument shows that each $\varphi_{r,x}$ can be univalently continued outside of $\barg$ when $r$ is sufficiently close to $1$ and in fact all such $\varphi_{r,x}$ have a common domain of holomorphy containing $\barg$.  A normal families argument then implies $\varphi_{r,x}$ converges to $\varphi_{1,x}$ uniformly on some open set containing $\barg$ as $r\rightarrow1$.  We can then use the Cauchy integral formula to conclude that $\varphi_{r,x}'$ converges to $\varphi_{1,x}'$ on a smaller open set containing $\barg$.  This means that when $r$ is sufficiently close to $1$, we have $\|\varphi_{r,x}'\|_{L^{\infty}(\Gamma_1)}\leq2\|\varphi_{1,x}'\|_{L^{\infty}(\Gamma_1)}$.  The same arguments can be applied to $\{\chi_{r,x}\}_{r\in[\xi(x),1]}$, which proves the claim.
\end{proof}

As a final preparatory step, we will need the following lemma, which is a slight refinement of Lemma 1.1 in \cite{Kalicurve}.

\begin{lemma}\label{hpbound}
If $q\in(0,\infty)$ and $w\in\mcg_r$  then there is a constant $\beta_w$ so that for every $f\in H^q(\mcg_r)$,
\[
|f(w)|^q\leq\beta_w\int_{\Gamma_r}|f(z)|^qd|z|.
\]
Furthermore, the constant $\beta_w$ may be taken uniform for all $r$ sufficiently close to $1$ (but perhaps depending on $w$).
\end{lemma}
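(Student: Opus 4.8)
The plan is to reduce the statement about $H^q(\mcg_r)$ to the familiar mean-value-type inequality for $H^q$ of the unit disk via the conformal maps $\chi_{r,x}$ and $\varphi_{r,x}$ introduced just before Lemma \ref{varphideriv}, taking $x = w$ as the distinguished interior point. First I would fix $w \in \mcg_r$, set $x = w$, and use $\varphi = \varphi_{r,w}\colon \mcg_r \to \bbD$ with $\varphi(w) = 0$. Given $f \in H^q(\mcg_r)$, the function $g := (f \circ \chi_{r,w}) \cdot (\chi_{r,w}')^{1/q}$ lies in $H^q(\bbD)$ by the standard change-of-variables for Hardy spaces on Jordan domains with rectifiable (here analytic) boundary — this is exactly the device behind Lemma 1.1 in \cite{Kalicurve}. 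Since $g \in H^q(\bbD)$, the subharmonicity of $|g|^q$ (when $q \ge 1$) or the factorization/subordination argument of page 74 in \cite{Rudin} (when $0 < q < 1$; compare the discussion around (\ref{subharm})) gives
\[
|g(0)|^q \le \int_0^{2\pi} |g(e^{i\theta})|^q \dtpi .
\]
Unwinding, $|g(0)|^q = |f(w)|^q \, |\chi_{r,w}'(0)|$, and $\int_0^{2\pi}|g(e^{i\theta})|^q\,\frac{d\theta}{2\pi} = \frac{1}{2\pi}\int_{\Gamma_r}|f(z)|^q\,|\varphi'(z)|\,|dz|$ after substituting $z = \chi_{r,w}(e^{i\theta})$ and using $|\chi_{r,w}'(e^{i\theta})|\,d\theta = |dz|$ together with $|\chi_{r,w}'| = 1/|\varphi'\circ\chi_{r,w}|$. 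Bounding $|\varphi'(z)| \le \sup_{\Gamma_r}|\varphi'|$ yields
\[
|f(w)|^q \le \frac{1}{2\pi\,|\chi_{r,w}'(0)|}\Big(\sup_{z\in\Gamma_r}|\varphi_{r,w}'(z)|\Big)\int_{\Gamma_r}|f(z)|^q\,d|z|,
\]
so one may take $\beta_w = \frac{1}{2\pi}\,|\chi_{r,w}'(0)|^{-1}\,\sup_{\Gamma_r}|\varphi_{r,w}'|$.

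For the uniformity statement, I would invoke Lemma \ref{varphideriv} directly: for $r$ sufficiently close to $1$ the maps $\varphi_{r,w}$ are all defined on a common open neighborhood of $\barg$, converge uniformly there to $\varphi_{1,w}$, and satisfy $\lambda_1 < |\varphi_{r,w}'| < \lambda_2$ on $\barg \supseteq \Gamma_r$; likewise $\chi_{r,w}'(0)$ is bounded away from $0$ as $r \to 1$ (again from the normal-families/Cauchy-integral argument in the proof of Lemma \ref{varphideriv} applied to $\chi_{r,w}$, or simply because $\chi_{r,w}'(0) \to \chi_{1,w}'(0) > 0$). Hence $\beta_w$ can be chosen independent of $r$ for all $r$ near $1$, depending only on $w$ (through $\xi(w)$ and the geometry of $\mcg_{1}$).

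The only genuinely delicate point is the change of variables for $H^q$ spaces when $0 < q < 1$: one must know that $f \mapsto (f\circ\chi_{r,w})(\chi_{r,w}')^{1/q}$ carries $H^q(\mcg_r)$ into $H^q(\bbD)$ and that the boundary integrals transform correctly. Since $\Gamma_r$ is an analytic curve, $\chi_{r,w}'$ extends analytically and non-vanishingly across $\partial\bbD$, so $(\chi_{r,w}')^{1/q}$ is a bounded, bounded-away-from-zero analytic factor and the identification is routine — this is precisely the content already used in Lemma 1.1 of \cite{Kalicurve}, which I would cite rather than reprove. Everything else is bookkeeping with the conformal maps and the estimates furnished by Lemma \ref{varphideriv}.
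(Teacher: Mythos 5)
Your argument reaches the right conclusion but travels a somewhat different road than the paper. The paper gets the inequality itself by citing Lemma 1.1 of \cite{Kalicurve} together with the equivalence of $E^q(\mcg_r)$ and $H^q(\mcg_r)$ from \cite{Duren}, and then proves the uniformity by a case split: for $q\geq1$ it uses Jensen's inequality and the Cauchy integral representation of $H^1$ functions, while for $0<q<1$ it passes to the harmonic majorant of $|f|^q$ and bounds the density of harmonic measure $d\omega_{r,w}/d|z|$ by $\|\varphi_{r,w}'\|_{L^{\infty}(\Gamma_r)}$, which Lemma \ref{varphideriv} controls uniformly in $r$. Your conformal transplantation $f\mapsto(f\circ\chi_{r,w})(\chi_{r,w}')^{1/q}$ treats all $q>0$ at once and makes the constant explicit, which is arguably cleaner; both proofs ultimately rest on Lemma \ref{varphideriv} (or the Carath\'{e}odory convergence behind it) for the uniformity, and your appeal to \cite{Kalicurve} and Chapter 10 of \cite{Duren} for the $E^q$--$H^q$ identification on an analytic curve matches what the paper does.

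One computational slip: in your change of variables the Jacobian factors cancel. Since
\[
|g(\eitheta)|^q\,d\theta=|f(\chi_{r,w}(\eitheta))|^q\,|\chi_{r,w}'(\eitheta)|\,d\theta=|f(z)|^q\,d|z|,
\]
the boundary integral equals exactly $\frac{1}{2\pi}\int_{\Gamma_r}|f(z)|^q\,d|z|$, with no extra factor of $|\varphi_{r,w}'(z)|$; your intermediate identity as written is false, though the error only weakens the bound. The correct constant is $\beta_w=\bigl(2\pi|\chi_{r,w}'(0)|\bigr)^{-1}=|\varphi_{r,w}'(w)|/(2\pi)$, simpler than the one you state, and its uniformity for $r$ near $1$ follows directly from the bound $|\varphi_{r,w}'|<\lambda_2$ on $\barg$ in Lemma \ref{varphideriv} (equivalently, from $\chi_{r,w}'(0)\to\chi_{1,w}'(0)>0$). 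With that identity corrected, the bound by $\sup_{\Gamma_r}|\varphi_{r,w}'|$ becomes unnecessary and the rest of your argument stands.
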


\begin{proof}
The inequality follows from Lemma 1.1 in \cite{Kalicurve} and the equivalence of the spaces $E^q(\mcg_r)$ and $H^q(\mcg_r)$ (see Chapter 10 in \cite{Duren}), so we need only focus on the uniformity.  If $q\geq1$, then this is a simple consequence of Jensen's inequality and the fact that $H^1$ functions are the Cauchy integral of their boundary values (see Theorem 10.4 in \cite{Duren}), so we need only focus on the case $0<q<1$.  To this end, let $g$ be the function harmonic in $\mcg_r$ satisfying $g(\psi(r\eitheta))=|f(\psi(r\eitheta))|^q$ almost everywhere on $\Gamma_r$.  Let $\omega_{r,w}$ by the harmonic measure for the region $\mcg_r$ and the point $w$.  Then by the subharmonicity of $f$, we have
\begin{align*}
|f(w)|^q\leq g(w)=\int_{\Gamma_r}g(z)d\omega_{r,w}(z)&\leq
\left\|\frac{d\omega_{r,w}}{d|z|}\right\|_{L^{\infty}(\Gamma_r)}\int_{\Gamma_r}g(z)d|z|\\
&=\left\|\varphi_{r,w}'\right\|_{L^{\infty}(\Gamma_r)}\int_{\Gamma_r}|f(z)|^qd|z|.
\end{align*}
We can now apply Lemma \ref{varphideriv} with $x=w$ to provide uniformity in the constant $\beta_w$.
\end{proof}

Now we are ready to prove the main theorem of this section.

\begin{theorem}\label{pbigo}
If $\mu$ and $G$ are as in Theorem \ref{bigone} with $\nu$ a Szeg\H{o} measure on $\partial\bbD$, then $\lambda_{\infty}(z;\mu,q)>0$ for all $z\in G$ and $q\in(0,\infty)$.
\end{theorem}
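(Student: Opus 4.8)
The plan is to dominate $\mu$ by the single ``product'' piece $\psi_*(h(\nu\otimes\tau))$ and then, for each $r$ near $1$ separately, reproduce the classical Szeg\H{o} lower bound for the Christoffel function at a fixed interior point, transporting it through the conformal maps $\chi_{r,x}$. First I would use that $\lambda_\infty(\cdot;\cdot,q)$ is monotone increasing in the measure, so since $\mu\geq\psi_*(h(\nu\otimes\tau))$ (the remaining summands $\sigma_1$, $\psi_*\sigma_2$ and the point masses are non-negative) and $h\geq\delta_1>0$ on $A_\rho$, it suffices to bound $\lambda_\infty(x;\psi_*(h(\nu\otimes\tau)),q)$ from below for a fixed $x\in G$. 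Let $s\in(\xi(x),1)$ be as in Lemma \ref{varphideriv}; then $x\in\mcg_r$ for every $r\in[s,1]$, and $\tau([s,1])>0$ because $1\in\supp(\tau)$. For a polynomial $Q$ with $Q(x)=1$, keeping only the absolutely continuous part of $\nu$,
\[
\int_{\bbC}|Q|^q\,d\psi_*(h(\nu\otimes\tau))\ \geq\ \int_{[s,1]}\left(\int_0^{2\pi}|Q(\psi(r\eitheta))|^q\,h(r\eitheta)\,\nu'(\theta)\,\dtpi\right)d\tau(r),
\]
so everything reduces to bounding the inner integral below by a positive constant independent of $r\in[s,1]$.

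For fixed $r\in[s,1]$, by Lemma \ref{varphideriv} the conformal map $\chi_{r,x}\colon\bbD\to\mcg_r$ with $\chi_{r,x}(0)=x$ extends analytically across $\partial\bbD$, so $f:=Q\circ\chi_{r,x}\in H^\infty(\bbD)\subseteq H^q(\bbD)$ and $f(0)=Q(x)=1$. I would change variables on the analytic curve $\Gamma_r$ between the parametrisations $\theta\mapsto\psi(r\eitheta)$ and $\sigma\mapsto\chi_{r,x}(e^{i\sigma})$; comparing arclength elements shows this is a real-analytic circle homeomorphism whose derivative is $r|\psi'(r\eitheta)|\,|\varphi_{r,x}'(\psi(r\eitheta))|$, hence bounded above and below by positive constants uniformly in $r\in[s,1]$ and $\theta$, by Lemma \ref{varphideriv} together with the fact that $\psi'$ is continuous and non-vanishing on a neighbourhood of $\partial\bbD$. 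The inner integral thereby becomes $\int_0^{2\pi}|f(e^{i\sigma})|^q\,V_r(\sigma)\,\frac{d\sigma}{2\pi}$, where $V_r$ is $h\nu'$ transported by this homeomorphism, multiplied by the Jacobian factor. Using $\delta_1\leq h\leq\delta_2$ on $A_\rho$, the uniform bounds on the homeomorphism, the same change of variables, and the hypothesis that $\nu$ is a Szeg\H{o} measure, I would verify that each $V_r$ lies in $L^1$ and is a Szeg\H{o} weight on $\partial\bbD$ with $\int_0^{2\pi}\log V_r(\sigma)\,\frac{d\sigma}{2\pi}\geq -C$ for a constant $C$ independent of $r\in[s,1]$.

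To conclude, I would apply the disk form of Szeg\H{o}'s extremal inequality: writing $V_r=|D_r|^q$ for the outer function $D_r$ on $\bbD$ with these boundary values, one has $fD_r\in H^q(\bbD)$, hence $|f(0)D_r(0)|^q\leq\int_0^{2\pi}|fD_r|^q\,\frac{d\sigma}{2\pi}=\int_0^{2\pi}|f|^q V_r\,\frac{d\sigma}{2\pi}$; since $f(0)=1$ and $|D_r(0)|^q=\exp\bigl(\int_0^{2\pi}\log V_r(\sigma)\,\frac{d\sigma}{2\pi}\bigr)\geq e^{-C}$, the inner integral is $\geq e^{-C}$. Integrating over $r\in[s,1]$ against $\tau$ gives $\int|Q|^q\,d\mu\geq e^{-C}\tau([s,1])$, and taking the infimum over all admissible $Q$ yields $\lambda_\infty(x;\mu,q)\geq e^{-C}\tau([s,1])>0$.

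The step I expect to be the main obstacle is the uniform control of the transported weight $V_r$: one must confirm that pulling $h\nu'$ back through the boundary correspondence of the two conformal parametrisations of $\Gamma_r$ and multiplying by the Jacobian preserves the Szeg\H{o} property, with a logarithmic integral bounded below uniformly for $r\in[s,1]$. This is precisely where the uniform bi-Lipschitz estimates of Lemma \ref{varphideriv}, together with the fact that everything is carried by $A_\rho$ where $h$ is pinched between positive constants, are used; the remaining ingredients---analytic continuation of $\chi_{r,x}$, the $H^q$ mean-value inequality, and elementary properties of outer functions---are standard.
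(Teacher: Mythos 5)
Your proposal is correct and follows essentially the same route as the paper: reduce by monotonicity to the absolutely continuous product part, disintegrate over the level curves $\Gamma_r$ for $r$ near $1$, and for each such $r$ obtain a positive lower bound at the fixed interior point via a Szeg\H{o}/outer function for the weight together with an $H^q$ point-evaluation inequality whose constant is uniform in $r$ by Lemma \ref{varphideriv}, finally integrating in $r$ using $1\in\supp(\tau)$. The only cosmetic difference is that the paper works directly on $\mcg_r$ with the Szeg\H{o}-type function $\Delta_{r,q}$ and Lemma \ref{hpbound}, whereas you transport everything to the disk and absorb the (uniformly pinched) Jacobian into the boundary weight before applying the standard disk inequality.
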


\begin{proof}
Since $h$ is bounded from below and $\lambda_n(z;\mu,q)$ increases as we increase $\mu$, we may assume that $\mu=\nu_{ac}\otimes\tau$.  In the region $G_{\rho}$ we may write (for $f$ continuous)
\begin{align}\label{moveto}
\int_{G_{\rho}}f(z)d\mu(z)=
\int_{\rho}^1\int_{\Gamma_t}f(z)\tilde{w}(z)d|z|
d\tau(t)
\end{align}
where $\tilde{w}$ is a weight on $G_{\rho}$. In fact, we can write explicitly
\begin{align}\label{proofofszego}
\tilde{w}(z)=\frac{1}{2\pi}\cdot\nu'\left(\frac{\phi(z)}{|\phi(z)|}\right)\frac{|\phi'(z)|}{|\phi(z)|}
\end{align}
(we identify $\nu'(\eitheta)$ and $\nu'(\theta)$).  As in \cite{MDCurve}, define $\Delta_{r,q}(z)$ by
\begin{align}\label{szegrpdef}
\Delta_{r,q}(z)=\exp\left(\frac{1}{2q\pi i}\oint_{\Gamma_r}\log\left(\tilde{w}(\zeta)\right)\frac{1+\overline{\varphi_r(\zeta)}
\varphi_r(z)}{\varphi_r(\zeta)-\varphi_r(z)}\varphi_r'(\zeta)d\zeta\right)
\end{align}
for each $r\in[\rho,1]$ so that $|\Delta_{r,q}(\zeta))|^q=\tilde{w}(\zeta)$ for almost every $\zeta\in\Gamma_r$ ((\ref{proofofszego}) implies the integral in (\ref{szegrpdef}) converges).

Now fix $y\in G$ and let $Q(z)$ be any polynomial so that $Q(y)=1$ (we make no assumptions on the degree of $Q$).  Let $s\in(\rho,1)$ be so that $y$ is interior to $\Gamma_s$ and so the constant $\beta_y$ of Lemma \ref{hpbound} may be chosen independently of $t\in[s,1]$.  We calculate
\begin{align}\label{pdelta}
\|Q\|^q_{L^q(\mu)}\geq\int_{s}^1\int_{\Gamma_t}|Q(z)\Delta_{t,q}(z)|^q\,d|z|d\tau(t)
\geq \beta_y^{-1}\int_s^1|\Delta_{t,q}(y)|^qd\tau(t)
\end{align}
by Lemma \ref{hpbound}.
The function $\Delta_{t,q}(y)$ is expressed as an exponential so the fact that $\nu$ is a Szeg\H{o} measure on $\partial\bbD$ implies $\Delta_{t,q}(y)$ is never equal to $0$ for any $t$.  Therefore, $|\Delta_{t,q}(y)|^q$ is not the zero function and so the integral on the far right on (\ref{pdelta}) is not equal to zero.  We have therefore obtained a lower bound for the far left hand side of (\ref{pdelta}) that is independent of the degree of $Q$.  Taking the infimum over all such $Q$ proves the theorem.
\end{proof}


Recall the definition of $K_n(z,w)$ from Corollary \ref{kernelmeasure}.  By equation (2.16.6) in \cite{Rice}, one has
\begin{align}\label{lambdn}
\lambda_n(z;\mu,2)=\frac{1}{K_n(z,z;\mu)}.
\end{align}
This and Theorem \ref{pbigo} for the case $q=2$ yields a proof of the following corollary:

\begin{corollary}\label{summable}
If $\tilde{\mu}\geq\mu$ and $\mu$ is as in Theorem \ref{bigone} with $\nu$ a Szeg\H{o} measure on $\partial\bbD$ then
\[
\sum_{n=0}^{\infty}|p_n(z;\tilde{\mu},2)|^2<\infty
\]
for all $z\in G$.
\end{corollary}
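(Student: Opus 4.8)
The plan is to deduce Corollary \ref{summable} directly from Theorem \ref{pbigo} applied in the case $q=2$, using the identity (\ref{lambdn}) that relates the Christoffel function to the diagonal of the reproducing kernel. First I would observe that the monotonicity of the Christoffel function under increasing the measure means it suffices to prove the statement for $\mu$ itself, since $\lambda_n(z;\tilde\mu,2)\geq\lambda_n(z;\mu,2)$ and hence $\lambda_\infty(z;\tilde\mu,2)\geq\lambda_\infty(z;\mu,2)>0$ by Theorem \ref{pbigo}; equivalently, $K_n(z,z;\tilde\mu)\leq K_n(z,z;\mu)$. Actually, one must be slightly careful: the corollary asserts summability for $\tilde\mu$, and the relevant monotonicity of the series $\sum_n |p_n(z;\tilde\mu,2)|^2 = \lim_n K_n(z,z;\tilde\mu)$ goes the right way, since $K_n(z,z;\tilde\mu) = 1/\lambda_n(z;\tilde\mu,2) \leq 1/\lambda_n(z;\mu,2) = K_n(z,z;\mu)$.

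The key steps in order: (1) Recall from (\ref{lambdn}) that $\sum_{n=0}^\infty |p_n(z;\tilde\mu,2)|^2 = \lim_{\nri} K_n(z,z;\tilde\mu) = \lim_{\nri} 1/\lambda_n(z;\tilde\mu,2) = 1/\lambda_\infty(z;\tilde\mu,2)$, where the limit exists (possibly $+\infty$) because $\lambda_n$ is non-increasing in $n$. (2) Use the elementary fact that enlarging the measure only increases the infimum defining $\lambda_n$, so $\lambda_n(z;\tilde\mu,2)\geq\lambda_n(z;\mu,2)$ for every $n$, and therefore $\lambda_\infty(z;\tilde\mu,2)\geq\lambda_\infty(z;\mu,2)$. (3) Invoke Theorem \ref{pbigo} with $q=2$ to conclude $\lambda_\infty(z;\mu,2)>0$ for every $z\in G$, hence $\lambda_\infty(z;\tilde\mu,2)>0$. (4) Combine with step (1) to get $\sum_{n=0}^\infty |p_n(z;\tilde\mu,2)|^2 = 1/\lambda_\infty(z;\tilde\mu,2) < \infty$.

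There is essentially no serious obstacle here; the corollary is a formal consequence of Theorem \ref{pbigo} once the dictionary between $\lambda_n$ and $K_n$ is in place. The only point requiring a word of justification is the passage to the limit in step (1): one should note that $K_n(z,z;\tilde\mu)$ is non-decreasing in $n$ (each new orthonormal polynomial adds a non-negative term $|p_n(z)|^2$), so the limit is the (possibly infinite) sum, and it equals $1/\lambda_\infty$ by taking reciprocals in (\ref{lambdn}) and passing to the limit. Given the brevity, I would present this as a two- or three-sentence proof: reduce to $\mu$ by monotonicity, cite Theorem \ref{pbigo} for $q=2$, and invoke (\ref{lambdn}).
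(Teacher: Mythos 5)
Your argument is correct and is exactly the route the paper takes: the text preceding the corollary says that the identity (\ref{lambdn}) combined with Theorem \ref{pbigo} for $q=2$ yields the proof, and your write-up simply makes explicit the monotonicity $\lambda_n(z;\tilde{\mu},2)\geq\lambda_n(z;\mu,2)$ and the passage to the limit that the paper leaves to the reader. No gaps.
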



Now that we have some understanding of $\lambda_{\infty}(x;\mu,q)$ for all $x\in G$ when $\mu$ is of the form considered in Theorem \ref{bigone}, we want to try to calculate it exactly.  Our next result will show that one can reduce the problem to considering only measures on $G=\bbD$ and only the point $x=0$.  Indeed, take any $x_0\in G$ and let $\varphi$ be the conformal map of $G$ to $\bbD$ sending $x_0$ to $0$ and satisfying $\varphi'(x_0)>0$.  By the injectivity of $\varphi$ on $\barg$ (we used Carath\'{e}odory's Theorem here; see Theorem I.3.1 in \cite{GarnMar}), we can push any measure $\mu$ on $\barg$ forward via $\varphi$ to get a measure $\varphi_*\mu$ on $\bard$ as in Section \ref{intro}.  With this notation, we can prove the following result:

\begin{proposition}\label{into}
With $x_0$, $\mu$ and $\varphi$ as above, we have $\lambda_{\infty}(x_0;\mu,q)=\lambda_{\infty}(0;\varphi_*\mu,q)$ for all $q\in(0,\infty)$.
\end{proposition}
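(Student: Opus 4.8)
The plan is to prove the two inequalities $\lambda_{\infty}(0;\varphi_*\mu,q)\leq\lambda_{\infty}(x_0;\mu,q)$ and $\lambda_{\infty}(x_0;\mu,q)\leq\lambda_{\infty}(0;\varphi_*\mu,q)$ by a symmetric polynomial-approximation argument; the reason this works with no bookkeeping of degrees is precisely that $\lambda_{\infty}$ is an infimum over \emph{all} polynomials attaining the prescribed value, so a competitor may be replaced by a polynomial approximant of much higher degree at no cost. Two facts are needed at the outset, both of which follow from the analyticity of $\partial G$. First, the conformal map $\varphi\colon G\to\bbD$ extends to a homeomorphism of $\barg$ onto $\bard$ (Carath\'{e}odory's Theorem, see Theorem I.3.1 in \cite{GarnMar}) and in fact continues to a univalent holomorphic map on an open set $U\supseteq\barg$, by the ``well-known argument'' already invoked in the proof of Proposition \ref{christoboundary}; similarly its inverse $\chi:=\varphi^{-1}$ continues to a univalent holomorphic map on an open set $V\supseteq\bard$. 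Second, $\chi\circ\varphi$ equals the identity on $\barg$, $\varphi\circ\chi$ equals the identity on $\bard$, and $\chi(0)=x_0$.

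For the first inequality, fix an arbitrary polynomial $Q$ with $Q(x_0)=1$. Then $Q\circ\chi$ is holomorphic on a neighborhood of $\bard$, so the partial sums of its Taylor series at $0$ give polynomials $R_k\to Q\circ\chi$ uniformly on $\bard$; since $\chi(0)=x_0$ we have $R_k(0)\to Q(x_0)=1$, so for all large $k$ the polynomial $\hat R_k:=R_k/R_k(0)$ satisfies $\hat R_k(0)=1$. By the defining property of $\varphi_*\mu$,
\[
\lambda_{\infty}(0;\varphi_*\mu,q)\leq\int|\hat R_k|^q\,d(\varphi_*\mu)=|R_k(0)|^{-q}\int_{\barg}|R_k\circ\varphi|^q\,d\mu.
\]
Now $R_k\circ\varphi\to(Q\circ\chi)\circ\varphi=Q$ uniformly on $\barg$ with the $|R_k\circ\varphi|$ uniformly bounded there, so dominated convergence --- using that $t\mapsto t^q$ is continuous on $[0,\infty)$ for every $q>0$, which takes care of the case $0<q<1$ --- gives $\int_{\barg}|R_k\circ\varphi|^q\,d\mu\to\int_{\barg}|Q|^q\,d\mu$ as $k\to\infty$, while $|R_k(0)|^{-q}\to1$. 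Hence $\lambda_{\infty}(0;\varphi_*\mu,q)\leq\int_{\barg}|Q|^q\,d\mu$, and taking the infimum over all such $Q$ yields $\lambda_{\infty}(0;\varphi_*\mu,q)\leq\lambda_{\infty}(x_0;\mu,q)$.

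The reverse inequality is obtained in the same way with $G$ and $\bbD$ interchanged: from a polynomial $R$ with $R(0)=1$, the function $R\circ\varphi$ is holomorphic on a neighborhood of $\barg$, and since $\barg$ is simply connected, Runge's theorem (cf. Theorem 2.5.7 in \cite{StSh}) provides polynomials $Q_k\to R\circ\varphi$ uniformly on $\barg$ with $Q_k(x_0)\to R(0)=1$; the same normalization and dominated-convergence step shows $\lambda_{\infty}(x_0;\mu,q)\leq\int|R|^q\,d(\varphi_*\mu)$, hence $\lambda_{\infty}(x_0;\mu,q)\leq\lambda_{\infty}(0;\varphi_*\mu,q)$, and the proposition follows. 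There is no deep difficulty here; the one step that genuinely needs the analyticity of $\partial G$, rather than mere smoothness, is the continuation of $\varphi$ and $\chi$ \emph{past} their closures, which is what makes $Q\circ\chi$ and $R\circ\varphi$ polynomially approximable \emph{on the closed sets} $\bard$ and $\barg$ and guarantees $R_k(0)\to1$ and $Q_k(x_0)\to1$ --- without uniform control up to and beyond the boundary these conclusions could break down once $\mu$ is allowed to charge $\partial G$. This is the main point to get right.
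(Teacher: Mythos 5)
Your argument is correct and is essentially the paper's own proof: compose a competitor with the conformal map (extended past the boundary by analyticity of $\partial G$), approximate the result by polynomials uniformly on the closed set via the same Runge-type theorem (Theorem 2.5.7 in \cite{StSh}) already used in Proposition \ref{christoboundary}, renormalize at the prescribed point, and argue symmetrically for the reverse inequality. The only cosmetic difference is that the paper starts from an $\epsilon$-near-optimal competitor and approximates $|\tilde{Q}|^q$ in sup norm, whereas you take an arbitrary competitor and pass to the infimum at the end.
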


\noindent\textit{Remark.}  We do not exclude the possibility that $G=\bbD$ and $\varphi$ is an automorphism of the disk.

\vspace{2mm}

\noindent\textit{Remark.}  If $\tau\neq\delta_1$, the resulting measure $\varphi_*\mu$ may not be of the form considered in Theorem \ref{bigone}.

\vspace{2mm}

\begin{proof}
Fix $q\in(0,\infty)$.  Given $\epsilon>0$, let $T$ be a polynomial so that $\|T\|^q_{L^q(\varphi_*\mu)}<\lambda_{\infty}(0;\varphi_*\mu,q)+\epsilon$ and $T(0)=1$.  Then $\tilde{Q}:=T\circ\varphi$ is a function on $\barg$ satisfying $\|\tilde{Q}\|^q_{L^q(\mu)}=\|T\|^q_{L^q(\varphi_*\mu)}$ and $\tilde{Q}(x_0)=1$.  Now let $Q$ be a polynomial satisfying $\||Q|^q-|\tilde{Q}|^q\|_{L^{\infty}(\barg)}<\epsilon$ and $Q(x_0)=1$ (such a $Q$ exists by the same reasoning as in the proof of Proposition \ref{christoboundary}).  It follows at once that $\lambda_{\infty}(x_0;\mu,q)\leq\lambda_{\infty}(0;\varphi_*\mu,q)+2\epsilon$ and one direction of the inequality follows by sending $\epsilon\rightarrow0$.  The reverse inequality follows by an argument symmetric to the one just given.
\end{proof}

\noindent\textit{Remark.}  If we set $\tau=\delta_1$, Proposition \ref{into} can be used to provide a new proof of Proposition 2.2.2 in \cite{OPUC1} and a new proof of Theorem 2.5.4 in \cite{OPUC1}.

\vspace{2mm}

Proposition \ref{into} allows us to calculate $\lambda_{\infty}(x;\mu,q)$ by considering only measures on $\bard$ and only the point $0$.  If $\mu$ happens to be supported on $\partial G$, then $\varphi_*\mu$ is supported on $\partial\bbD$ so that $\lambda_{\infty}(0;\varphi_*\mu,q)$ is in fact independent of $q$ (see Theorem 2.5.4 in \cite{OPUC1}) so the same must be true of $\lambda_{\infty}(x;\mu,q)$.  However, the following example shows that the value of $\lambda_{\infty}(0;\mu,q)$ is in general not as easily calculated when $\supp(\mu)\nsubseteq\partial G$.

\vspace{2mm}

\noindent\textbf{Example.}  Let us consider the special case of Corollary \ref{summable} where $G=\bbD$, $h=1$, and $z=0$.  Let us further assume $\tau$ and $\nu$ are both probability measures.  Fix any $N\in\bbN$ and let $Q_N(z)$ be a polynomial of degree at most $N$ satisfying $Q_N(0)=1$. Then for any $r<1$ we have
\[
\int_0^{2\pi}|Q_N(r\eitheta)|^2d\nu(\theta)\geq\lambda_N(0;\nu,2)
\]
because $Q_N(rz)$ is still a polynomial of degree $N$ in $z$ that is equal to $1$ at $0$.  Integrating both sides in the variable $r$ with respect to $\tau$ from $0$ to $1$, we obtain $\lambda_N(0;\mu,2)\geq\lambda_N(0;\nu,2)$.  Sending $N\rightarrow\infty$ we obtain $\lambda_{\infty}(0;\mu,2)\geq\lambda_{\infty}(0;\nu,2)>0$ (see equation (2.2.3) in \cite{OPUC1}).

However, if $0\in\supp(\tau)$ then the reverse inequality is false unless $d\nu=\dtpi$ (we still assume $\nu$ is a Szeg\H{o} measure on $\partial\bbD$), i.e. it is true that $\lambda_{\infty}(0;\mu,2)>\lambda_{\infty}(0;\nu,2)$.  To see this, recall Proposition 2.16.2 in \cite{Rice}, which tells us that $Q_{n,z}(w):=K_n(z,w;\mu)K_n(z,z;\mu)^{-1}$ satisfies $Q_{n,z}(z)=1$ and $\|Q_{n,z}(z)\|^2_{\mu}=\lambda_n(z;\mu,2)$.  Corollary \ref{summable} tells us that $\{Q_{n,0}(w)\}_{n\in\bbN}$ is uniformly bounded on $\{z:|z|\leq r_1\}$ for any $r_1<1$.  By Montel's Theorem this is a normal family so we may take $\nri$ through some subsequence $\mcn\subseteq\bbN$ so that $\{Q_{n,0}(w)\}_{n\in\mcn}$ converges uniformly to a function $Q_{\infty,0}(w)$, which is analytic in $\{z:|z|<r_1\}$ and $Q_{\infty,0}(0)=1$.  By continuity and the fact that if $d\nu\neq\dtpi$ then $\lambda_{\infty}(0;\nu,2)<1$, it must be that
\[
\int_0^{2\pi}|Q_{\infty,0}(r\eitheta)|^2d\nu(\theta)>\frac{1+\lambda_{\infty}(0;\nu,2)}{2}
\]
for all $r$ sufficiently small (say $r<r_0$).  By Dominated Convergence, the same must be true for all $Q_{n,0}(z)$ for $n$ sufficiently large and $n\in\mcn$.  We conclude that for sufficiently large $n\in\mcn$, we have
\begin{align*}
\lambda_n(0;\mu,2)&=\|Q_{n,0}(z)\|^2_{\mu}=\int_0^{r_0}\int_0^{2\pi}|Q_{n,0}(r\eitheta)|^2d\nu(\theta)d\tau(r)+
\int_{r_0}^{1}\int_0^{2\pi}|Q_{n,0}(r\eitheta)|^2d\nu(\theta)d\tau(r)\\
&>\frac{1+\lambda_{\infty}(0;\nu,2)}{2}\tau([0,r_0])+\lambda_{\infty}(0;\nu,2)\tau((r_0,1])\\
&=\frac{1-\lambda_{\infty}(0;\nu,2)}{2}\tau([0,r_0])+\lambda_{\infty}(0;\nu,2).
\end{align*}
Since $\lambda_n(0;\mu,2)$ is decreasing in $n$, $\tau([0,r_0])>0$ and $\lambda_{\infty}(0;\nu,2)<1$, the desired conclusion follows.
\begin{flushright}
$\Box$
\end{flushright}

\vspace{4mm}

\section{Appendix}\label{appendix}

\subsection{Non-uniqueness when $q=1$.}\label{pequal}

On page 84 in \cite{StaTo}, it is stated that one does not have uniqueness of the $L^q$-extremal polynomial when $0<q<1$.  This is a correct statement, but we show here that this can be extended to include the case $q=1$.

\begin{proposition}\label{nonunique}
If $\mu$ is a finite measure supported on $[-2,-1]\cup[1,2]$ and $\mu(A)=\mu(-A)$ for all measurable sets $A$, then one does not have uniqueness of the $L^1$-extremal polynomial $P_n(\mu,1)$ for every odd $n$.
\end{proposition}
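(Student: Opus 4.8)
The plan is to exploit the reflection symmetry of $\mu$ under $\iota(z)=-z$ together with the fact that the $L^1(\mu)$ norm is convex but not strictly convex. Write $\mathcal{E}_n$ for the set of monic degree-$n$ polynomials attaining $\inf\{\|Q_n\|_{L^1(\mu)}:Q_n=z^n+\text{lower order}\}$; this set is nonempty by the usual coercivity-plus-continuity argument. Since $\iota_*\mu=\mu$, the involution $R(z)\mapsto\hat R(z):=(-1)^nR(-z)$ preserves monicity and preserves $\|\cdot\|_{L^1(\mu)}$, hence maps $\mathcal{E}_n$ to itself. Fix an odd $n$ and pick $P_n\in\mathcal{E}_n$. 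If $\hat P_n\neq P_n$ we are done immediately, since $P_n$ and $\hat P_n$ are then two distinct extremal polynomials. So we may assume $\hat P_n=P_n$; inspecting coefficients, for odd $n$ this forces $P_n$ to contain only odd-degree terms, i.e. $P_n(z)=zR(z^2)$ for some monic $R$ of degree $k=(n-1)/2$ (with $R\equiv1$ when $n=1$).

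Next I would perturb the linear factor $z$. For real $a$ with $|a|<1$ set $Q_a(z):=(z-a)R(z^2)$, a monic polynomial of degree $n$ with $Q_0=P_n$. Because $\supp(\mu)\subseteq[-2,-1]\cup[1,2]$, the factor $z-a$ has the same sign as $z$ throughout $\supp(\mu)$ whenever $|a|<1$, so
\[
\|Q_a\|_{L^1(\mu)}=\int_{[1,2]}(z-a)|R(z^2)|\,d\mu(z)+\int_{[-2,-1]}(a-z)|R(z^2)|\,d\mu(z).
\]
Differentiating in $a$ yields $\int_{[-2,-1]}|R(z^2)|\,d\mu(z)-\int_{[1,2]}|R(z^2)|\,d\mu(z)$, which vanishes because $|R(z^2)|$ is an even function and $\mu$ is reflection-symmetric. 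Thus $a\mapsto\|Q_a\|_{L^1(\mu)}$ is constant on $(-1,1)$, equal to its value $\|P_n\|_{L^1(\mu)}$ at $a=0$; hence every $Q_a$ with $|a|<1$ lies in $\mathcal{E}_n$. Since $Q_a-Q_b=(b-a)R(z^2)\not\equiv0$ for $a\neq b$, the $Q_a$ are pairwise distinct, and non-uniqueness of $P_n(\mu,1)$ follows.

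I do not anticipate a serious obstacle here; the two points needing care are (i) verifying that $\hat P_n=P_n$ with $n$ odd really does give the factorization $P_n(z)=zR(z^2)$, which is a direct look at coefficients, and (ii) the sign bookkeeping that makes $a\mapsto\|Q_a\|_{L^1(\mu)}$ differentiable with vanishing derivative on $(-1,1)$ — this is exactly where $0\notin\supp(\mu)$ and the evenness of $|R(z^2)|$ enter. One should also note that nothing forces $P_n$ to have real coefficients, but both the involution $R\mapsto\hat R$ and the family $Q_a$ make sense and the computation goes through verbatim in that case as well; and the degenerate possibility that the extremal value is $0$ is covered by the same argument.
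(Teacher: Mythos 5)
Your proposal is correct and is essentially the paper's own argument: both use the symmetry of $\mu$ to reduce to an odd extremal polynomial $zR(z^2)$ and then observe that the $L^1(\mu)$ norm of $(z-a)R(z^2)$ is constant in $a$ on $(-1,1)$ because $z-a$ keeps the sign of $z$ on the support. The only cosmetic difference is that the paper phrases this as a proof by contradiction from an assumed unique minimizer, while you split explicitly into the cases $\hat P_n\neq P_n$ and $\hat P_n=P_n$.
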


\begin{proof}
Suppose for contradiction that $P_{2n+1}(\mu,1)$ can be uniquely defined.  By the symmetry of the measure, we must have that $P_{2n+1}(0;\mu,1)=0$.  We may then write $P_{2n+1}(z;\mu,1)=zQ_n(z)$, for some polynomial $Q_n$ of degree $2n$ and satisfying $Q_n(x)=Q_n(-x)$ for all $x\in\bbR$.  For $a\in(-1,1)$, define
\[
P_{2n+1}^{(a)}(z)=(z-a)Q_n(z)
\]
so that $P_{2n+1}^{(0)}(z)=P_{2n+1}(z;\mu,1)$.  We then have
\begin{align*}
\frac{\partial}{\partial a}\|P_{2n+1}^{(a)}\|_{L^1(\mu)}&=\frac{\partial}{\partial a}\left(
\int_{-2}^{-1}(a-z)|Q_n(z)|d\mu(z)+\int_{1}^{2}(z-a)|Q_n(z)|d\mu(z)\right)\\
&=\int_{-2}^{-1}|Q_n(z)|d\mu(z)-\int_{1}^{2}|Q_n(z)|d\mu(z)=0,
\end{align*}
which contradicts our uniqueness assumption.
\end{proof}

If in Proposition \ref{nonunique} we also assume $\mu$ has no pure points then an alternative proof can be found by appealing to Theorem 2.1 in \cite{Pinkus}.

\vspace{10mm}

\end{document}